\documentclass[final, 3p, times]{elsarticle}%

\usepackage{graphicx}
\usepackage{float}
\usepackage[section]{placeins}
\usepackage{caption}


\usepackage{xcolor}

\usepackage{amsmath}
\usepackage{amssymb}
\usepackage{amsthm}
\usepackage{mathtools}

\usepackage{enumitem}

\usepackage[
  colorlinks=true,
  linkcolor=blue,
  citecolor=blue,
  urlcolor=blue
]{hyperref}
\usepackage{cleveref}

\makeatletter
\@ifundefined{tmptocnumberline}{%
  \newdimen\appnamewidth
  \def\tmptocnumberline#1{%
    \setbox0=\hbox{\appendixname}%
    \appnamewidth=\wd0
    \addtolength\appnamewidth{2.5pc}%
    \hb@xt@\appnamewidth{#1\hfill}%
  }%
}{}
\makeatother

\newcommand{\T}{\mathbb T}
\newcommand{\eps}{\varepsilon}
\newcommand{\Reps}{\mathcal R_\varepsilon}
\newcommand{\Aepsp}{\mathcal A_\varepsilon^+}
\newcommand{\Aeps}{\Aepsp}
\newcommand{\Aepsm}{\mathcal A_\varepsilon^-}

\newcommand{\norm}[1]{\left\lVert #1\right\rVert}

\newcommand{\dt}{\tau}

\newtheorem{theorem}{Theorem}[section]
\newtheorem{lemma}[theorem]{Lemma}
\newtheorem{proposition}[theorem]{Proposition}
\newtheorem{corollary}[theorem]{Corollary}
\newtheorem{remark}[theorem]{Remark}
\newtheorem{example}[theorem]{Example}

\biboptions{compress}

\journal{}

\begin{document}
\raggedbottom
\begin{frontmatter}
\title{Pointwise Monotonicity of the Allen--Cahn Flow and Dynamical Limitations of Energy-Stable Schemes}

\tnotetext[label1]{The research of Dongling Wang is supported in part by the National Natural Science Foundation of China under grants 12271463,
and the Natural Science Foundation of Hunan Province under Grant 2026JJ50363.
The work of Pansheng Li is supported by the Postgraduate Scientific Research Innovation Project of Hunan Province. \\ Declarations of interest: none.}

\author[XTU]{Pansheng Li}
\ead{lpsmath@smail.xtu.edu.cn; ORCID 0009-0004-5289-9190}
\author[XTU]{Dongling Wang\corref{mycorrespondingauthor}}
\ead{wdymath@xtu.edu.cn; ORCID 0000-0001-8509-2837}
\cortext[mycorrespondingauthor]{Corresponding author. }

\address[XTU]{Hunan Research Center of the Basic Discipline Fundamental Algorithmic Theory and Novel Computational Methods, National Center for Applied Mathematics in Hunan, School of Mathematics and Computational Science, Xiangtan University, Xiangtan 411105, Hunan, China}

\begin{abstract}

Energy stability is fundamental in the numerical approximation of phase-field models, but it does not by itself guarantee faithful reproduction of local dynamics. Recent ODE-level studies have shown that energy dissipation alone does not ensure dynamical fidelity for spatially homogeneous phase-field models (Xu and Xu, 2023; Li and Wang, 2026). However, these ODE arguments do not extend directly to spatially inhomogeneous Allen--Cahn solutions: diffusion enters the full residual $\mathcal R_\varepsilon(u):=\Delta u+\varepsilon^{-2}(u-u^3)$ and may change its pointwise sign even when $0<u\leq1$.

Motivated by this distinction, we introduce sign-dependent admissible classes determined by the full Allen--Cahn residual and prove that the exact PDE flow preserves the corresponding pointwise monotone-growth or monotone-decay direction. The fully implicit Euler method inherits this structure in its standard unique-solvability regime. We then examine several widely used energy-stable schemes. First-order convex splitting and sufficiently strong first-order stabilization can preserve the correct direction, but on a reduced effective time scale, causing artificial delay or damping. By contrast, second-order convex splitting, stabilized CN/AB, IEQ, and SAV schemes admit monotone data for which large time steps generate wrong-signed pointwise increments despite dissipation of the original or a modified energy. For all four schemes, these strict reversals persist under sufficiently small smooth spatially nonhomogeneous admissible perturbations, so the counterexamples remain valid with genuinely active diffusion. Numerical experiments confirm this classification. Thus, pointwise monotonicity is a local dynamical criterion complementary to energy stability and maximum-bound preservation.

\end{abstract}

\begin{keyword}
Allen--Cahn equation; pointwise monotonicity; energy stability; convex splitting; stabilized schemes; auxiliary-variable methods
\end{keyword}

\end{frontmatter}


\section{Introduction}

Phase-field models provide a fundamental framework for describing phase transitions and interfacial dynamics in materials science~\cite{allen1979microscopic,cahn1958free,hawkins2012numerical}. The Allen--Cahn equation is a central prototype in this class of models. In this paper, we consider the following normalized flow:
\begin{equation}
\left\{
\begin{array}{ll}
    u_t=\Delta u-\dfrac{1}{\eps^2}f(u), & (x,t)\in\Omega\times(0,T],\\[1.2ex]
    u(x,0)=u_0(x), & x\in\Omega,
\end{array}
\right.
\label{eq:AC}
\end{equation}
under periodic or homogeneous Neumann boundary conditions, where $\Omega\subset \mathbb{R}^d$ ($d\geqslant1$) is a smooth bounded domain (or a periodic cell), the parameter $\varepsilon>0$ represents the interfacial width, and
\[
    f(u)=F'(u)=u^3-u,\qquad F(u)=\frac14(u^2-1)^2.
\]
The Allen--Cahn equation \eqref{eq:AC} can be viewed as the $L^2$ gradient flow of the energy functional
\begin{equation}
    E(u)=\int_{\Omega}\left(\frac12|\nabla u|^2+\frac1{\eps^2}F(u)\right)\,dx,
    \label{eq:AC-energy}
\end{equation}
and thus satisfies the energy dissipation law
\begin{equation}
    \frac{d}{dt}E(u)=
    \left(
    \frac{\delta E(u)}{\delta u},
    \frac{\partial u}{\partial t}
    \right)=
    -\|\partial_t u\|_{L^2(\Omega)}^2
    \le 0,
    \qquad \forall\, t>0,
    \label{eq:energy-dis}
\end{equation}
where $(\cdot,\cdot)$ denotes the standard $L^2$ inner product.
This Lyapunov structure is the basic stability mechanism behind many numerical methods for Allen--Cahn and related phase-field equations~\cite{du1991numerical,elliott1993global,feng2003numerical,zhang2009numerical,feng2015analysis}.

A large body of work has been devoted to constructing numerical schemes that preserve a discrete counterpart of \eqref{eq:energy-dis}. Classical time discretizations include explicit Euler, implicit Euler, Crank--Nicolson, modified Crank--Nicolson, and implicit Runge--Kutta type methods~\cite{crank1947practical,jeong2016comparison,condette2011spectral,shen2010numerical,hou2017numerical,zhang2021numerical,zhang2021preserving}. To obtain efficient and robust schemes for stiff phase-field dynamics, several partially implicit energy-stable techniques have been developed, including convex splitting methods~\cite{eyre1998unconditionally,tangqiao2020phasefield,graser2013time,guan2014second,guillen2014second}, stabilized semi-implicit methods~\cite{xu2006stability,he2007large,feng2013stabilized,tangyang2016imex}, invariant energy quadratization (IEQ) methods~\cite{yang2016linear,yang2020convergence}, and scalar auxiliary variable (SAV) methods~\cite{shen2018scalar,shen2019new}. These schemes are designed so that either the original energy or a modified energy is nonincreasing. Such a property is indispensable for long-time computation. However, energy dissipation is a global Lyapunov-type constraint and does not necessarily determine the local pointwise direction of the dynamics.

Indeed, the Allen--Cahn equation also has pointwise and order-preserving structures that are not contained in the single scalar inequality \eqref{eq:energy-dis}. The maximum principle, comparison principle, metastability, and sharp-interface asymptotics have been extensively studied in the PDE literature~\cite{evans1992phase,pego1989front,ilmanen1993convergence,alikakos1994convergence,chen1994spectrum,chen1996global,caginalp1998convergence}. These results indicate that a reliable time discretization should not merely dissipate energy; it should also reproduce the correct direction of motion, transient time scale, and pointwise behavior whenever these structures are present in the continuous dynamics.

This issue is already apparent in the spatially homogeneous setting.
Under periodic or homogeneous Neumann boundary conditions, spatially
homogeneous initial data give rise to spatially homogeneous solutions.
More precisely, if $u(x,0)\equiv u_0$, where $u_0$ is a constant, then
the corresponding solution remains independent of the spatial variable,
namely, $u(x,t)=u(t)$. A precise statement and a proof based on the
Duhamel principle are given in Theorem~\ref{thm:homogeneous-reduction} of
Appendix A. Hence, if
$u_0(x)\equiv u_0$, the Allen--Cahn PDE \eqref{eq:AC} reduces exactly to the scalar ODE
\begin{equation}
    u_t+\frac1{\eps^2}f(u)=0,
    \qquad u(0)=u_0.
    \label{eq:AC-ODE}
\end{equation}
Its exact solution is
\[
    u(t)=\frac{u_0}
    {\sqrt{e^{-2t/\eps^2}+u_0^2\bigl(1-e^{-2t/\eps^2}\bigr)}}.
\]
Hence, if $0<u_0<1$, the solution increases monotonically to $+1$, while if $-1<u_0<0$, it decreases monotonically to $-1$. On this invariant class, the PDE and ODE trajectories, equilibria, and monotone directions coincide exactly. This correspondence is important for the later analysis: if a time discretization loses monotonicity for spatially homogeneous data, then the same construction is already a genuine counterexample for the corresponding PDE scheme, rather than a separate ODE phenomenon. Recent ODE-level studies have shown that several energy-stable or partially implicit schemes may exhibit precisely such defects for large time steps, including oscillations, delayed dynamics, and convergence to an incorrect steady state~\cite{xu2019stability,xu2023lack,li2026asymptotic}.

At the PDE level, Hao et al.~\cite{hao2025stability} recently investigated the stability and robustness of several time discretization schemes for the Allen--Cahn equation through bifurcation and perturbation analysis. Their notions are formulated in terms of the uniqueness of the forward and inverse discrete solution maps and are used to assess sensitivity to initial conditions and convergence toward physical solutions. The present work addresses a different but complementary question: whether a scheme preserves the local pointwise time direction selected by the full Allen--Cahn residual. This criterion detects transient dynamical distortions, including artificial delay and wrong-signed pointwise increments, even when the discrete solution remains energy stable, stays within the maximum-bound range, or ultimately approaches the correct equilibrium.

Nevertheless, the homogeneous reduction captures only the constant spatial mode and does not identify the full PDE with the scalar ODE. The Allen--Cahn PDE evolves in an infinite-dimensional function space and also contains nonconstant spatial modes, on which diffusion acts. Once the initial datum is spatially inhomogeneous, the diffusion term no longer vanishes, and the initial pointwise direction is determined by the full residual
\[
    \Reps(u):=\Delta u-\frac1{\eps^2}f(u)=\Delta u+\frac1{\eps^2}(u-u^3).
\]
Thus the sign of the PDE velocity depends not only on the range of the data, but also on its spatial profile. In particular, the condition $0<u_0<1$, which fixes the increasing direction of the homogeneous ODE, does not by itself guarantee a nonnegative initial PDE velocity.

A simple periodic example makes this distinction explicit. Let
\[
u_0(x)=a+b\cos(kx), \quad x\in \Omega=(0,2\pi),
\]
where $k\in\mathbb N$ and $0<a-b<a+b<1$. Then
$0<u_0(x)<1$ for all $x\in\Omega$.

For the scalar ODE \eqref{eq:AC-ODE}, the situation is
straightforward. Indeed, the scalar reaction field
\(
s\mapsto \varepsilon^{-2}s(1-s^2)
\)
is positive for every \(0<s<1\). Hence every scalar state in the range
of \(u_0\) has the same positive time direction. In other words, if the
diffusion is suppressed and the reaction dynamics is considered
pointwise, each value \(u_0\in(0,1)\) is driven upward toward the
stable state \(1\).

For the PDE \eqref{eq:AC}, however, the situation is different because an additional spatial contribution
enters through diffusion. Its initial time direction is not determined by the range
of \(u_0\) alone, but by the full Allen--Cahn residual
\[
\mathcal R_\varepsilon(u_0)(x)=
-bk^2\cos(kx)
+\frac{1}{\varepsilon^2}
\bigl(a+b\cos(kx)\bigr)
\Bigl[1-\bigl(a+b\cos(kx)\bigr)^2\Bigr].
\]
At the crest \(x=0\), this gives
\(
\mathcal R_\varepsilon(u_0)(0)
=
-bk^2
+\frac{1}{\varepsilon^2}
(a+b)\bigl[1-(a+b)^2\bigr].
\)
Therefore, if
\(
k^2>
(a+b)\bigl[1-(a+b)^2\bigr]/
(\varepsilon^2 b),
\)
then
\(
\mathcal R_\varepsilon(u_0)(0)<0.
\)
On the other hand, at the trough \(x=\pi/k\), one has
\(
\mathcal R_\varepsilon(u_0)(\pi/k)
=
bk^2
+\frac{1}{\varepsilon^2}
(a-b)\bigl[1-(a-b)^2\bigr]
>0.
\)
Thus, for sufficiently large spatial frequency \(k\), the same initial
profile satisfies \(0<u_0(x)<1\) everywhere, but its initial PDE
velocity has different signs at different spatial points: it is
negative at a sharp crest and positive at a trough. This phenomenon is
illustrated in Figure~\ref{fig:initial-velocity-sign-contrast-final}.

This example shows that the range condition \(0<u_0<1\) is sufficient
to determine the positive direction of the scalar reaction ODE, but it
is not sufficient to determine the pointwise time direction of the
spatially varying Allen--Cahn PDE. For the PDE, the diffusion term may
overcome the positive reaction contribution near a sufficiently sharp
crest, leading to
\(
u_t(x,0)=\mathcal R_\varepsilon(u_0)(x)<0
\)
even though \(0<u_0(x)<1\). Consequently, pointwise monotone growth
requires a sign condition on the full residual \(\mathcal R_\varepsilon(u_0)\),
rather than merely a range condition on \(u_0\). This observation leads
to the central PDE-level question studied in this paper: beyond energy dissipation, which time
discretizations faithfully preserve the pointwise dynamical direction of the Allen--Cahn flow?

This paper formulates and analyzes this pointwise directional structure at the PDE level. Define the Allen--Cahn residual operator by
\begin{equation}
    \Reps(u):=\Delta u+\frac1{\eps^2}(u-u^3).
    \label{eq:intro-residual-operator}
\end{equation}
Then $u_t=\Reps(u)$ for the exact flow. The sign-dependent admissible classes are defined by
\begin{equation}
    \mathcal A_\varepsilon^\sigma:=
    \left\{
    u_0\in C^2(\overline\Omega):
    0< \sigma u_0\le 1,\quad
    \sigma\Reps(u_0)\ge0
    \right\},
    \qquad \sigma\in\{+1,-1\}.
    \label{eq:Aeps-sigma}
\end{equation}
Compatibility with the imposed periodic or homogeneous Neumann boundary
condition is understood in the definition of these classes.
For $\sigma=+1$, the positive-direction class, denoted by $\Aeps$, consists of data satisfying $0< u_0\le1$ and $\Reps(u_0)\ge0$, so the initial velocity is nonnegative at every spatial point. For $\sigma=-1$, the negative-direction class, denoted by $\Aepsm$, describes the symmetric decreasing case. The exact Allen--Cahn flow is shown to preserve these pointwise time directions:
\begin{equation}
    u_0\in\Aeps \quad\Longrightarrow\quad u_t(x,t)\ge0,
    \qquad x\in\Omega,\quad t\ge0,
    \label{eq:intro-monotone-growth}
\end{equation}
and, by the odd symmetry $u\mapsto -u$,
\begin{equation}
    u_0\in\Aepsm \quad\Longrightarrow\quad u_t(x,t)\le0,
    \qquad x\in\Omega,\quad t\ge0.
    \label{eq:intro-monotone-decay}
\end{equation}

Note that this directional property is distinct from the classical maximum principle. The maximum principle constrains only the range of the continuous solution, whereas \eqref{eq:intro-monotone-growth}--\eqref{eq:intro-monotone-decay} prescribe the sign of its time derivative at each fixed spatial point. Thus, pointwise monotonicity is a local-in-space dynamical feature of the Allen--Cahn flow, stronger than mere bound preservation. This distinction is important at the discrete level: a time-stepping method may keep the solution within the admissible range while still failing to reproduce the correct pointwise time direction.

This PDE-level monotonicity is then used as a benchmark for several widely used time discretizations. The resulting analysis shows that unconditional energy stability and pointwise monotonicity are fundamentally different requirements. The main contributions are as follows.
\begin{itemize}[leftmargin=2em]
    \item A PDE-level pointwise-monotonicity framework is developed through sign-dependent admissible classes determined by the full Allen--Cahn residual. The exact flow is proved to preserve the corresponding monotone-growth and monotone-decay directions, and the fully implicit Euler method inherits this structure in its unique-solvability regime. Together, these results provide continuous and discrete benchmarks complementary to the energy law and the maximum principle.

    \item Using these benchmarks, we give a unified dynamical classification of representative energy-stable schemes. First-order convex splitting and sufficiently strong first-order stabilization may preserve the pointwise direction only on a distorted effective time scale, producing artificial delay or damping. By contrast, second-order convex splitting, stabilized CN/AB, IEQ, and SAV can generate wrong-signed pointwise increments for sufficiently large time steps despite dissipation of the original or a modified energy.

    \item The strict sign reversals are shown to persist beyond the spatially homogeneous invariant class under sufficiently small smooth nonhomogeneous admissible perturbations with nonzero diffusion. One- and two-dimensional numerical experiments, including grid-refinement tests, confirm the resulting classification and demonstrate that the observed reversals are not artifacts of an ODE reduction or a particular spatial resolution.
\end{itemize}

The rest of the paper is organized as follows. Section \ref{sec:monotonicity}  establishes pointwise monotonicity of the exact Allen--Cahn flow for both sign-dependent admissible classes. Section \ref{sec:IE} studies the fully implicit Euler method as a monotonicity-preserving reference scheme. Section \ref{sec:energy-stable} collects standard unconditionally energy-stable schemes, including convex splitting, stabilized semi-implicit, IEQ, and SAV schemes, and analyzes their dynamical limitations. Section \ref{sec:nonhomogeneous-persistence} establishes the persistence of the strict sign-reversal counterexamples under smooth spatially nonhomogeneous admissible perturbations. Section \ref{sec:num} presents numerical illustrations. Concluding remarks are given in Section \ref{sec:con}.


\section{Pointwise Monotonicity of the Exact Allen--Cahn Flow}
\label{sec:monotonicity}
Throughout this section, we consider the Allen--Cahn equation \eqref{eq:AC} under either periodic boundary conditions or homogeneous Neumann boundary conditions,
$\partial_\nu u=0$ on $\partial\Omega$. Initial data are understood to be compatible with the imposed boundary condition.
Using the definition of \(f\) in \eqref{eq:AC}, the equation can be equivalently written as
$u_t=\Delta u+\frac{1}{\eps^2}(u-u^3).$
Define the residual operator $\Reps(u)$ in \eqref{eq:intro-residual-operator},
which is the instantaneous velocity determined by \eqref{eq:AC}: if
\(u(x,0)=u_0(x)\), then
$u_t(x,0)=\Reps(u_0)(x).$

Following the sign-dependent definition \eqref{eq:Aeps-sigma}, we write
\(\Aeps\) and \(\Aepsm\) for the positive- and negative-direction classes, respectively. Thus
\begin{equation}
\begin{aligned}
    \Aeps
    &=\left\{u_0\in C^2(\overline\Omega):
      0<u_0\le1,\quad \Reps(u_0)\ge0\right\},\\
    \Aepsm
    &=\left\{u_0\in C^2(\overline\Omega):
      -1\le u_0<0,\quad \Reps(u_0)\le0\right\}.
\end{aligned}
    \label{eq:admissible-negative-class}
\end{equation}
The condition \(u_0\in\Aeps\) means that the initial velocity is nonnegative at every spatial point, whereas \(u_0\in\Aepsm\) means that the initial velocity is nonpositive. Since the two classes are related by the symmetry \(u\mapsto -u\), it is enough to prove the monotone-growth result for \(\Aeps\); the monotone-decay statement then follows immediately.

\subsection{Pointwise Monotonicity of the Allen--Cahn Flow}
\begin{theorem}[Pointwise monotone growth]\label{thm:exact-monotone-growth}
Let \(u\) be a sufficiently smooth solution of \eqref{eq:AC} with periodic or homogeneous Neumann boundary conditions. If
$u_0\in\Aeps$, then $u_t(x,t)\ge0$ for all $(x,t)\in\Omega\times(0,\infty)$.
Consequently, for every fixed \(x\in\Omega\), the map \(t\mapsto u(x,t)\) is nondecreasing.
\end{theorem}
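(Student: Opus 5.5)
The plan is to differentiate the equation in time and apply the parabolic maximum (comparison) principle to $v:=u_t$. Writing \eqref{eq:AC} as $u_t=\Delta u+\eps^{-2}(u-u^3)$ and differentiating in $t$ (legitimate for the sufficiently smooth solution assumed in Theorem~\ref{thm:exact-monotone-growth}) gives the linear equation
\[
v_t=\Delta v+c(x,t)\,v,\qquad c(x,t):=\frac{1}{\eps^2}\bigl(1-3u(x,t)^2\bigr).
\]
The boundary conditions carry over: in the periodic case $v$ is periodic, and in the Neumann case $\partial_\nu v=\partial_t(\partial_\nu u)=0$ on $\partial\Omega$. The initial value is $v(x,0)=\Reps(u_0)(x)\ge0$ by the definition of $\Aeps$.

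Next I bound the coefficient. Since $0<u_0\le1$, the classical maximum principle for Allen--Cahn, obtained by comparison with the constant solutions $0$ and $1$ (or simply $|u|\le1$ via the cubic nonlinearity), gives $-1\le u\le 1$ on $\Omega\times[0,T]$. Hence $|c|\le 2/\eps^2=:K$, and in fact $c\le \eps^{-2}$. Only a lower bound $c\ge -K$, or a uniform bound $|c|\le K$, is needed on each finite interval $[0,T]$. Smoothness on compact time intervals would also give boundedness of $c$ directly, so this step is not delicate.

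The key step is the sign-preservation argument for the linear problem. Set $w:=e^{-\lambda t}v$ with $\lambda>K$, so that $w_t=\Delta w+(c-\lambda)w$ with $c-\lambda<0$. Suppose $w$ becomes negative on $[0,T]$. I would perturb to $w_\delta:=w+\delta e^{t}$, or equivalently argue with the strict version, and consider the first time $t_*>0$ at which $\min_{\overline\Omega}w_\delta=0$. At a minimum point $x_*$ one has $\Delta w_\delta\ge0$ and $\partial_t w_\delta\le0$, while the zeroth-order term forces a strict sign contradiction. For periodic data the minimum is interior in the torus. For Neumann data a boundary minimum is handled by Hopf's lemma, which would give $\partial_\nu w<0$ and contradict $\partial_\nu w=0$; alternatively one can avoid Hopf by an energy argument. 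In that alternative, multiply by the negative part $w^-$ and integrate, using that the boundary term vanishes under either boundary condition. This gives $\frac{d}{dt}\|v^-\|_{L^2}^2\le 2K\|v^-\|_{L^2}^2$, and $v^-(0)=0$ together with Gronwall yields $v^-\equiv0$. I prefer this $L^2$ Stampacchia route since it treats both boundary conditions uniformly, and it is the step I expect to need the most care: justifying $\frac{d}{dt}\frac12\|v^-\|^2=(v_t,-v^-)$ and $(\Delta v,-v^-)=-\|\nabla v^-\|^2\le0$ requires the regularity of $v$, which the smoothness hypothesis supplies.

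Concluding, $u_t=v\ge0$ on $\Omega\times(0,T]$ for every $T$, hence for all $t>0$. Monotonicity of $t\mapsto u(x,t)$ then follows by integrating in time. The decay statement for $\Aepsm$ follows by applying the result to $-u$, using the odd symmetry of $f$.
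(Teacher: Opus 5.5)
Your proposal is correct and is essentially the paper's proof: you differentiate in time to get $v_t=\Delta v+c\,v$ with $c=\eps^{-2}(1-3u^2)$ bounded by the maximum principle, use $v(\cdot,0)=\Reps(u_0)\ge0$, and propagate the sign on each $[0,T]$ (the paper does this via the shift $e^{-Kt}v$ and a cited parabolic maximum principle, while your preferred negative-part/Gronwall estimate proves that step directly and needs no shift). One small correction: the sign argument needs an upper bound on $c$ (so that $c-\lambda<0$, or so that $\int_\Omega c\,(v^-)^2\,dx\le K\|v^-\|_{L^2}^2$), not a lower bound $c\ge-K$ as you state, although your bound $c\le\eps^{-2}$ already supplies it.
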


\begin{proof}
Set
$v(x,t):=u_t(x,t).$
Differentiating the equivalent form of \eqref{eq:AC} with respect to time gives
\begin{equation}
    v_t=\Delta v+\frac{1}{\eps^2}(1-3u^2)v.
    \label{eq:v-equation-raw}
\end{equation}
Define $c(x,t):=\frac{1}{\eps^2}\bigl(1-3u(x,t)^2\bigr).$
Then $v_t=\Delta v+c(x,t)v.$
By the maximum principle for the Allen--Cahn equation and the assumption \(0\le u_0\le1\), we have
$0\le u(x,t)\le1$. Hence $c$ is bounded.

At \(t=0\), using the equation itself,
$v(x,0)=u_t(x,0)=\Reps(u_0)(x)\ge0.$
Fix \(T>0\). Choose \(K>\sup_{\Omega\times[0,T]}c\), and set
$w(x,t):=e^{-Kt}v(x,t).$
Then \(w\) satisfies
\begin{equation}
    w_t=\Delta w+(c-K)w,
    \label{eq:w-equation-continuous}
\end{equation}
With \(w(x,0)\ge0\) and \(c-K<0\) on \(\Omega\times[0,T]\), the parabolic maximum principle for periodic or homogeneous Neumann boundary conditions gives \(w\ge0\). Hence \(v=e^{Kt}w\ge0\) on \(\Omega\times[0,T]\). Since \(T\) is arbitrary, \(u_t=v\ge0\) for all \(t\ge0\).
Finally, for \(0\le t_1<t_2\),
\[
    u(x,t_2)-u(x,t_1)=\int_{t_1}^{t_2}u_t(x,s)\,ds\ge0.
\]
This completes the proof.
\end{proof}

\begin{corollary}[Pointwise monotone decay for negative initial data]
Let \(u\) be a sufficiently smooth solution of \eqref{eq:AC} with periodic or homogeneous Neumann boundary conditions. If
$u_0\in\Aepsm,$
then $u_t(x,t)\le0$ for all $(x,t)\in\Omega\times(0,\infty)$.
Consequently, for every fixed \(x\in\Omega\), the map \(t\mapsto u(x,t)\) is nonincreasing.
\end{corollary}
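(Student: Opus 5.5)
We will derive the corollary from Theorem~\ref{thm:exact-monotone-growth} using the odd symmetry of the nonlinearity, without re-running the maximum-principle argument. Let \(u\) be the given smooth solution with \(u_0\in\Aepsm\), and set \(\tilde u(x,t):=-u(x,t)\). Since \(f(s)=s^3-s\) is odd, we have \(\tilde u_t=-u_t=-\Delta u+\eps^{-2}f(u)=\Delta\tilde u-\eps^{-2}f(\tilde u)\). Hence \(\tilde u\) again solves \eqref{eq:AC}, with initial datum \(\tilde u_0=-u_0\). The boundary conditions are preserved because both the periodic and the homogeneous Neumann conditions are linear and homogeneous: \(\partial_\nu\tilde u=-\partial_\nu u=0\).

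Next we check that \(\tilde u_0\in\Aeps\). The range condition \(-1\le u_0<0\) gives \(0<\tilde u_0\le1\). For the residual, \(\Reps\) is odd:
\[
\Reps(-v)=-\Delta v+\eps^{-2}\bigl(-v+v^3\bigr)=-\Reps(v).
\]
Therefore \(\Reps(\tilde u_0)=-\Reps(u_0)\ge0\). Regularity (\(C^2(\overline\Omega)\)) and compatibility with the boundary condition are clearly inherited by \(-u_0\).

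Applying Theorem~\ref{thm:exact-monotone-growth} to \(\tilde u\) gives \(\tilde u_t\ge0\) on \(\Omega\times(0,\infty)\). Hence \(u_t=-\tilde u_t\le0\) there. For fixed \(x\) and \(0\le t_1<t_2\), integrating in time gives \(u(x,t_2)-u(x,t_1)=\int_{t_1}^{t_2}u_t(x,s)\,ds\le0\), so \(t\mapsto u(x,t)\) is nonincreasing.

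No step is a real obstacle. The only point needing care is that the theorem's hypotheses are fully met by \(\tilde u\). In particular, its proof uses the maximum principle bound \(0\le\tilde u\le1\), which is exactly the translated version of \(-1\le u\le0\) for \(u\). So the symmetry reduction is complete.
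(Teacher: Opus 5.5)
Your proposal is correct and follows the paper's proof: set $\tilde u=-u$, use the oddness of $f$ and of $\Reps$ to show $-u_0\in\Aeps$, apply Theorem~\ref{thm:exact-monotone-growth}, and flip the sign back. Your explicit checks that the boundary conditions carry over and your time-integration step for the monotonicity of $t\mapsto u(x,t)$ fill in details the paper leaves implicit.
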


\begin{proof}
Set \(\widetilde u=-u\). Since the Allen--Cahn nonlinearity is odd, \(\widetilde u\) satisfies the same Allen--Cahn equation with initial datum \(\widetilde u_0=-u_0\). If \(u_0\in\Aepsm\), then
$0\le \widetilde u_0\le1$,
and
$
\Reps(\widetilde u_0)=\Reps(-u_0)=-\Reps(u_0)\ge0.
$
Hence \(\widetilde u_0\in\Aeps\). By the preceding theorem, \(\widetilde u_t\ge0\). Therefore \(u_t=-\widetilde u_t\le0\), which proves the claim.
\end{proof}

\begin{remark}[Difference from the maximum principle]
The maximum principle gives the range constraint, for instance \(0\le u(x,t)\le1\) on the positive branch, whereas pointwise monotone growth gives the time-directional order property \(u_t(x,t)\ge0\). These are different properties. The residual condition \(\Reps(u_0)\ge0\) is essential for monotone growth, and the sign-reversed condition \(\Reps(u_0)\le0\) yields the corresponding monotone decay on the negative branch.
\end{remark}

\begin{corollary}[A simple checkable sufficient condition]\label{cor:checkable-condition}
Let \(u_0(x)\in C^2(\overline\Omega)\) satisfy
\(
    0<m\le u_0(x)\le M<1.
\)
If $\eps^2\norm{\Delta u_0}_{L^\infty(\Omega)}\le m(1-M^2)$,
then \(\Reps(u_0)\ge0\). Consequently, the exact solution satisfies \(u_t\ge0\).
\end{corollary}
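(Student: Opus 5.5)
The plan is to verify the residual sign pointwise and then appeal to Theorem~\ref{thm:exact-monotone-growth}. First I bound the reaction term from below. Write $g(s):=s(1-s^2)$. For $s\in[m,M]\subset(0,1)$ we have $s\ge m>0$, and $1-s^2\ge 1-M^2>0$ because $s\le M<1$. Hence
\[
g(s)\ge m(1-M^2)\quad\text{for all } s\in[m,M].
\]
No concavity argument is needed: we simply use the product of two nonnegative lower bounds. Equivalently, $\min_{[m,M]}g=\min\{g(m),g(M)\}$, and both values dominate $m(1-M^2)$.

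Next, fix $x\in\Omega$. By this bound and the hypothesis,
\[
\Reps(u_0)(x)=\Delta u_0(x)+\eps^{-2}g(u_0(x))
\ge -\norm{\Delta u_0}_{L^\infty(\Omega)}+\eps^{-2}m(1-M^2)\ge0,
\]
where the last inequality is exactly the assumption $\eps^2\norm{\Delta u_0}_{L^\infty(\Omega)}\le m(1-M^2)$ after division by $\eps^2$.

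Finally, $u_0\in C^2(\overline\Omega)$ satisfies $0<m\le u_0\le M<1$, so $0<u_0\le1$. Together with $\Reps(u_0)\ge0$, this places $u_0$ in $\Aeps$; compatibility with the boundary condition is understood, as in the definition of the class. Theorem~\ref{thm:exact-monotone-growth} then gives $u_t\ge0$.

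There is no real obstacle. The only point needing care is that the lower bound on $g$ must hold uniformly over the range of $u_0$, and the elementary product bound above provides this directly.
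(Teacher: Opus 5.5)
Your proof is correct and matches the paper's argument: the product bound $u_0(1-u_0^2)\ge m(1-M^2)$ and $\Delta u_0\ge-\norm{\Delta u_0}_{L^\infty(\Omega)}$ give $\Reps(u_0)\ge0$, and membership in $\Aeps$ together with Theorem~\ref{thm:exact-monotone-growth} then gives $u_t\ge0$, a final step the paper leaves implicit. One wording point: your aside $\min_{[m,M]}g=\min\{g(m),g(M)\}$ is true but relies on concavity of $g$ on $(0,1)$, so it is an alternative observation rather than an equivalent restatement of the product bound.
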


\begin{proof}
For every \(x\in\Omega\),
$u_0(x)-u_0(x)^3=u_0(x)(1-u_0(x)^2)\ge m(1-M^2)$,
and
$\Delta u_0(x)\ge-\norm{\Delta u_0}_{L^\infty(\Omega)}.$
Thus
$
\Reps(u_0)(x)\ge -\norm{\Delta u_0}_{L^\infty(\Omega)}+\frac{1}{\eps^2}m(1-M^2)\ge0.
$
\end{proof}

\subsection{Examples of admissible initial data}

\begin{example}[Constant initial data]
Let \(u_0(x)\equiv u_0\), where \(0<u_0<1\). Then
\(
    \Reps(u_0)=\frac{1}{\eps^2}(u_0-u_0^3)
    =\frac{1}{\eps^2}u_0(1-u_0^2)>0.
\)
Hence \(u_0\in\Aeps\).
\end{example}

\begin{example}[A shifted cosine initial value]
Let \(\Omega=(0,2\pi)\) with periodic boundary conditions and take
$u_0(x)=c+a\cos(kx),\, k\in\mathbb N.$
Assume \(0<c-|a|\) and \(c+|a|<1\). Since \(\norm{u_0''}_{L^\infty}=|a|k^2\), a sufficient condition for \(u_0\in\Aeps\) is
that $\eps^2 |a|k^2\le (c-|a|)\bigl(1-(c+|a|)^2\bigr).$
\end{example}

\begin{example}[A small periodic perturbation]
Let
$u_0(x)=c+\eta\phi(x),\, x\in\T^d,
$
where \(\phi\in C^2_{\rm per}(\T^d)\), \(\norm{\phi}_{L^\infty}\le1\), and \(\norm{\Delta\phi}_{L^\infty}\le L\). If \(0<c-\eta\), \(c+\eta<1\), and
$\eps^2\eta L\le (c-\eta)\bigl(1-(c+\eta)^2\bigr),$
then \(u_0\in\Aeps\).
\end{example}

\section{Fully Implicit Euler Scheme and Pointwise Monotonicity}
\label{sec:IE}

The fully implicit Euler scheme for the Allen--Cahn equation \eqref{eq:AC} is given by
\begin{equation}
    \frac{u^{n+1}-u^n}{\dt}
    =\Delta u^{n+1}+\frac{1}{\eps^2}\bigl(u^{n+1}-(u^{n+1})^3\bigr).
    \label{eq:IE}
\end{equation}
For each time level, define the discrete Allen--Cahn residual
$ R^n:=\Delta u^n-\frac{1}{\eps^2}f(u^n)
    =\Delta u^n+\frac{1}{\eps^2}\bigl(u^n-(u^n)^3\bigr).$
The usual convexity argument for the fully implicit Allen--Cahn step is based on the functional
\begin{equation}
    \mathcal E_{\dt}^{n+1}(v;u^n)
    :=
    \int_\Omega\left(\frac12|\nabla v|^2+\frac{1}{\eps^2}F(v)\right)dx
    +\frac{1}{2\dt}\norm{v-u^n}_{L^2(\Omega)}^2,
    \label{eq:IE-functional}
\end{equation}
where \(F(v)=\frac14(v^2-1)^2\). This functional is strictly convex, and the fully implicit step is uniquely solvable, under \(\dt<\eps^2\); see \cite{xu2019stability}. We use this strict condition below so that the zero-order coefficient in the monotonicity proof is positive.

\begin{theorem}[Pointwise monotonicity of fully implicit Euler]\label{thm:FIE-monotone}
Assume that \eqref{eq:IE} is solved with periodic or homogeneous Neumann boundary conditions, and that the time step satisfies
$0<\dt<\eps^2.$ If $R^n\ge0$ in $\Omega,$
then $u^{n+1}(x)\ge u^n(x)$ for all $x\in\Omega.$
Moreover, the residual nonnegativity propagates, i.e., $R^{n+1}\ge0$.
Consequently, if \(R^0\ge0\), then
$u^0(x)\le u^1(x)\le u^2(x)\le\cdots$, for all $x\in\Omega.$
\end{theorem}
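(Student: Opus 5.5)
Set $\delta:=u^{n+1}-u^n$ and derive an elliptic equation for it. Subtracting $R^n$ from both sides of \eqref{eq:IE}, written as $\delta/\dt=\Delta u^{n+1}+\eps^{-2}(u^{n+1}-(u^{n+1})^3)$, gives
\[
\frac{\delta}{\dt}-\Delta\delta-\frac{1}{\eps^2}\Bigl(1-\bigl((u^{n+1})^2+u^{n+1}u^n+(u^n)^2\bigr)\Bigr)\delta=R^n .
\]
This uses the factorization $a^3-b^3=(a^2+ab+b^2)(a-b)$. Define
\[
q:=\frac{1}{\dt}-\frac{1}{\eps^2}+\frac{1}{\eps^2}\bigl((u^{n+1})^2+u^{n+1}u^n+(u^n)^2\bigr).
\]
Since $a^2+ab+b^2\ge0$ for all real $a,b$ and $\dt<\eps^2$, we get $q\ge 1/\dt-1/\eps^2>0$ pointwise, with no a priori range information on $u^{n+1}$. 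The equation then reads $-\Delta\delta+q\delta=R^n\ge0$, with periodic or homogeneous Neumann boundary conditions inherited by $\delta$.

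Apply the weak minimum principle. Suppose $\min\delta<0$, attained at $x_0$. For periodic data $x_0$ is an interior-type point. For Neumann data, either $x_0$ is interior, or a Hopf-lemma argument excludes a boundary minimum; alternatively, test with $\delta^-=\max(-\delta,0)$. The energy version avoids regularity issues: multiply by $-\delta^-$ and integrate. The boundary terms vanish, and we obtain
\[
\int|\nabla\delta^-|^2+\int q(\delta^-)^2=-\int R^n\delta^-\le0 ,
\]
so $\delta^-\equiv0$. I would use this weak formulation, since it needs only $u^{n+1}\in H^1\cap L^\infty$ and positivity of $q$. This gives $u^{n+1}\ge u^n$.

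For the propagation $R^{n+1}\ge0$, there is nothing further to prove: the scheme itself states $R^{n+1}=\Delta u^{n+1}+\eps^{-2}(u^{n+1}-(u^{n+1})^3)=\delta/\dt\ge0$. Induction then gives the full monotone chain $u^0\le u^1\le u^2\le\cdots$ pointwise. Existence and uniqueness of $u^{n+1}$ at each step are taken from the strict convexity of \eqref{eq:IE-functional} for $\dt<\eps^2$, as cited.

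The main obstacle is the minimum-principle step. It needs the zero-order coefficient $q$ to be strictly positive without knowing bounds on $u^{n+1}$. The factorization trick handles this: it makes the nonlinear term contribute a nonnegative amount regardless of signs. The residual assumption $\dt<\eps^2$ then supplies the rest. Some care is needed with regularity in the Neumann case, which the weak $\delta^-$ test sidesteps.
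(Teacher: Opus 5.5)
Your proof is correct and follows the paper's argument: the same increment equation with a zero-order coefficient bounded below by $1/\dt-1/\eps^2>0$, testing against the negative part to force $\delta^-\equiv0$, and reading $R^{n+1}=\delta/\dt\ge0$ directly off the scheme. The only difference is cosmetic. You express the cubic difference through $a^3-b^3=(a^2+ab+b^2)(a-b)$ rather than the paper's mean-value form $3(\xi^n)^2w^{n+1}$, which gives an explicit continuous coefficient and avoids any question about the measurability of $\xi^n$.
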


\begin{proof}
Let \(w^{n+1}:=u^{n+1}-u^n\). From \eqref{eq:IE},
we have
$
 \frac{w^{n+1}}{\dt}=\Delta u^{n+1}+\frac{1}{\eps^2}\bigl(u^{n+1}-(u^{n+1})^3\bigr).
$
Using \(u^{n+1}=u^n+w^{n+1}\), we obtain
\begin{equation}
    \frac{w^{n+1}}{\dt}-\Delta w^{n+1}
    =R^n+\frac{1}{\eps^2}\bigl[(u^{n+1}-(u^{n+1})^3)-(u^n-(u^n)^3)\bigr].
    \label{eq:IE-w-2}
\end{equation}
By the mean value theorem, for each \(x\) there exists \(\xi^n(x)\) between \(u^n(x)\) and \(u^{n+1}(x)\) such that
\[
    (u^{n+1}-(u^{n+1})^3)-(u^n-(u^n)^3)
    =\bigl(1-3(\xi^n)^2\bigr)w^{n+1}.
\]
Since \(1-3(\xi^n)^2 \le1\), \eqref{eq:IE-w-2} becomes
\begin{equation}
    -\Delta w^{n+1}+a^n(x)w^{n+1}=R^n,
    \label{eq:IE-elliptic-w}
\end{equation}
where $ a^n(x):=\frac{1}{\dt}-\frac{1}{\eps^2}\bigl(1-3(\xi^n(x))^2\bigr)\ge
    \frac{1}{\dt}-\frac{1}{\eps^2}> 0.$

Let \((w^{n+1})^-:=\max\{-w^{n+1},0\}\), \((w^{n+1})^+:=\max\{w^{n+1},0\}\), thus
   $ w^{n+1}=(w^{n+1})^+-(w^{n+1})^-$.
Since \(R^n\ge0\), multiplying \eqref{eq:IE-elliptic-w} by \((w^{n+1})^-\) and integrating, we obtain
\[
    \int_\Omega\bigl(-\Delta w^{n+1}+a^n w^{n+1}\bigr)(w^{n+1})^-\,dx
    =
    \int_\Omega R^n(w^{n+1})^-\,dx\ge0.
\]
On the other hand, periodic or homogeneous Neumann boundary conditions imply
\[
    \int_\Omega (-\Delta w^{n+1})(w^{n+1})^-\,dx
    =-\left\|\nabla (w^{n+1})^-\right\|_{L^2(\Omega)}^2\le0,
\]
and
\[
    \int_\Omega a^n w^{n+1}(w^{n+1})^-\,dx
    =-\int_\Omega a^n\bigl((w^{n+1})^-\bigr)^2\,dx\le0.
\]
Thus both sides must be zero. Hence \((w^{n+1})^-=0\) and \(w^{n+1}\ge0\), i.e., \(u^{n+1}\ge u^n\).

Finally, the scheme itself gives
$
    R^{n+1}=\Delta u^{n+1}+\frac{1}{\eps^2}\bigl(u^{n+1}-(u^{n+1})^3\bigr)=\frac{u^{n+1}-u^n}{\dt}\ge0.
$
This proves the propagation of residual nonnegativity and completes the induction.
\end{proof}

\begin{remark}[Compatibility with the continuous admissible class]
If \(u_0\in\Aeps\) and \(0<\dt<\eps^2\), then
\[
    R^0=\Delta u^0+\frac{1}{\eps^2}\bigl(u^0-(u^0)^3\bigr)=\Reps(u_0)\ge0.
\]
Thus the fully implicit Euler monotonicity theorem applies directly to admissible initial data.
\end{remark}

\section{Dynamical Limitations of Unconditionally Energy-Stable Schemes}
\label{sec:energy-stable}

This section collects four representative mechanisms for constructing unconditionally energy-stable or modified-energy-stable Allen--Cahn schemes: convex splitting, stabilization, invariant energy quadratization, and scalar auxiliary variables. The common theme is that global energy dissipation does not, by itself, guarantee the preservation of the local pointwise monotone-growth direction established in the preceding sections.

\subsection{Convex splitting schemes}
\label{sec:CSS}

This subsection studies two convex splitting discretizations. We first analyze the standard first-order convex splitting scheme CSS1. Although it is unconditionally energy stable with respect to the original Allen--Cahn energy and preserves the monotone direction, this property is inherited through its equivalence to a fully implicit Euler step with a reduced effective time step. As a consequence, the scheme may reproduce the correct direction but on a distorted time scale, leading to artificial delay in the transient dynamics. We then consider the standard second-order convex splitting scheme of Crank--Nicolson/Adams--Bashforth type. This higher-order convex splitting scheme is stable in a modified-energy sense rather than in the original-energy sense. We show that this modified-energy stability still does not imply pointwise monotonicity for large time steps.

\subsubsection{Scheme and unconditional energy stability}
The first-order convex splitting scheme for the Allen--Cahn equation \eqref{eq:AC} is given by
\begin{equation}
    \frac{u^{n+1}-u^n}{\tau}
    =\Delta u^{n+1}
    +\frac{1}{\eps^2}\bigl(u^n-(u^{n+1})^3\bigr).
    \label{eq:CSS1}
\end{equation}
It is unconditionally energy stable with respect to the original Allen--Cahn
energy for every $\tau>0$; see \cite{eyre1998unconditionally,tangqiao2020phasefield,xu2019stability}.
In what follows, this standard energy property is used only as background. Our concern is whether
this energy-stable scheme also reproduces the pointwise monotone dynamics of the Allen--Cahn flow.

\subsubsection{Effective implicit-Euler structure}

The following elementary identity is the structural reason behind both the monotonicity and
the delay of CSS1.

\begin{lemma}[Effective fully implicit form of CSS1 \cite{tangqiao2020phasefield,xu2019stability}]
\label{lem:CSS1-effective-step}
The CSS1 scheme \eqref{eq:CSS1} is equivalent to
\begin{equation}
    \frac{u^{n+1}-u^n}{\tau_e} =
    \Reps(u^{n+1})=
    \Delta u^{n+1}
    +\frac{1}{\eps^2}\bigl(u^{n+1}-(u^{n+1})^3\bigr),
    \label{eq:CSS1-effective-IE}
\end{equation}
where $\tau_e:=\frac{\eps^2\tau}{\eps^2+\tau}$.
Consequently, $0<\tau_e<\eps^2$ for every $\tau>0.$
\end{lemma}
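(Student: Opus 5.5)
The identity is purely algebraic, so the plan is to rearrange \eqref{eq:CSS1} by moving the explicit reaction term to the implicit level. Starting from \eqref{eq:CSS1}, I write
\[
u^n-(u^{n+1})^3=\bigl(u^{n+1}-(u^{n+1})^3\bigr)-(u^{n+1}-u^n).
\]
This gives
\[
\frac{u^{n+1}-u^n}{\tau}+\frac{1}{\eps^2}(u^{n+1}-u^n)=\Reps(u^{n+1}),
\]
that is,
\[
\Bigl(\frac1\tau+\frac1{\eps^2}\Bigr)(u^{n+1}-u^n)=\Reps(u^{n+1}).
\]
The coefficient on the left equals $(\eps^2+\tau)/(\eps^2\tau)=1/\tau_e$, which is exactly \eqref{eq:CSS1-effective-IE}.

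Each step above is reversible: I only add and subtract $\eps^{-2}u^{n+1}$ and multiply by the nonzero constant $\tau_e$. Hence the two relations have the same solution sets at each time level under the same periodic or Neumann boundary conditions, so the equivalence holds in both directions. This also means that any solution of one scheme is a solution of the other, and uniqueness transfers between them.

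For the final claim, $\tau_e>0$ is immediate since $\eps^2,\tau>0$. For the upper bound,
\[
\tau_e=\eps^2\cdot\frac{\tau}{\eps^2+\tau}<\eps^2,
\]
because $\tau<\eps^2+\tau$. No step presents a real obstacle. The only point worth stating explicitly is that the bound $\tau_e<\eps^2$ holds for every $\tau>0$, which is what later places CSS1 inside the regime of Theorem~\ref{thm:FIE-monotone}. The delay statement then follows because $\tau_e<\min\{\tau,\eps^2\}$.
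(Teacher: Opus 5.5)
Your proof is correct. Adding and subtracting $\eps^{-2}u^{n+1}$ to obtain $(\tau^{-1}+\eps^{-2})(u^{n+1}-u^n)=\Reps(u^{n+1})$, together with the elementary bound $\tau_e=\eps^2\tau/(\eps^2+\tau)<\eps^2$, is exactly the standard rearrangement behind this lemma; the paper cites the result without writing out a proof, and later uses the same identity in the rescaled form \eqref{eq:CSS1-slope-compression}.
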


\begin{theorem}[Pointwise monotonicity of CSS1]
\label{thm:CSS1-monotonicity}

Assume that \eqref{eq:CSS1} is solved with periodic or homogeneous Neumann boundary conditions. Then, for every nominal time step $\tau>0$, if $R^{n}=\Reps(u^{n})\ge0$ in $\Omega,$
then the CSS1 solution satisfies $u^{n+1}(x)\ge u^n(x)$ and $R^{n+1}=\Reps(u^{n+1})\ge0$ for all $x\in\Omega$.
Consequently, if $R^0\ge0$, then $u^0(x)\le u^1(x)\le u^2(x)\le\cdots$.
\end{theorem}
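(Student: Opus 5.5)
The plan is to reduce Theorem~\ref{thm:CSS1-monotonicity} to Theorem~\ref{thm:FIE-monotone} through Lemma~\ref{lem:CSS1-effective-step}.

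First, I will verify the equivalence in Lemma~\ref{lem:CSS1-effective-step}. Write the reaction term of \eqref{eq:CSS1} as
\[
u^n-(u^{n+1})^3=\bigl(u^{n+1}-(u^{n+1})^3\bigr)-(u^{n+1}-u^n).
\]
Moving the term $-(u^{n+1}-u^n)/\eps^2$ to the left-hand side gives
\[
\Bigl(\frac1\tau+\frac1{\eps^2}\Bigr)(u^{n+1}-u^n)=\Reps(u^{n+1}),
\]
and $\frac1\tau+\frac1{\eps^2}=\frac1{\tau_e}$ with $\tau_e=\eps^2\tau/(\eps^2+\tau)$. This shows that $u^{n+1}$ solves \eqref{eq:CSS1} if and only if it solves the fully implicit Euler step \eqref{eq:IE} with time step $\tau_e$. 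Moreover, $\tau_e<\eps^2$ for every $\tau>0$, because $\tau/(\eps^2+\tau)<1$.

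Second, since $0<\tau_e<\eps^2$, Theorem~\ref{thm:FIE-monotone} applies verbatim with $\dt=\tau_e$. If $R^n\ge0$, it gives $u^{n+1}\ge u^n$ pointwise. The propagation $R^{n+1}\ge0$ follows directly from the effective form, since $R^{n+1}=\Reps(u^{n+1})=(u^{n+1}-u^n)/\tau_e\ge0$. Induction on $n$ then yields the monotone chain $u^0\le u^1\le u^2\le\cdots$ whenever $R^0\ge0$.

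The only point needing care, and the main obstacle, is solvability and uniqueness. The statement speaks of ``the CSS1 solution'', and the monotonicity argument in Theorem~\ref{thm:FIE-monotone} requires $u^{n+1}$ to be a (sufficiently regular) solution. I will note that CSS1 is uniquely solvable for every $\tau>0$, because it is the Euler--Lagrange equation of the strictly convex functional
\[
\int_\Omega\Bigl(\tfrac12|\nabla v|^2+\tfrac{1}{4\eps^2}v^4\Bigr)\,dx+\frac{1}{2\tau}\|v-u^n\|_{L^2(\Omega)}^2-\frac{1}{\eps^2}\int_\Omega u^n v\,dx.
\]
Equivalently, the functional \eqref{eq:IE-functional} with $\dt=\tau_e$ is strictly convex. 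Hence the unique CSS1 iterate coincides with the unique implicit Euler iterate with step $\tau_e$, and the conclusions transfer. The mean-value and negative-part argument inside Theorem~\ref{thm:FIE-monotone} then applies unchanged, with the positive coefficient $a^n\ge 1/\tau_e-1/\eps^2=1/\tau>0$.
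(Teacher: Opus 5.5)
Your proposal is correct and follows the paper's own proof. Like the paper, you use Lemma~\ref{lem:CSS1-effective-step} to identify one CSS1 step with a fully implicit Euler step of size $\tau_e=\eps^2\tau/(\eps^2+\tau)<\eps^2$, invoke Theorem~\ref{thm:FIE-monotone}, and read off $R^{n+1}=(u^{n+1}-u^n)/\tau_e\ge0$ from the effective form. Your explicit check of the lemma and your strict-convexity solvability remark are accurate additions: the CSS1 functional and the implicit Euler functional with step $\tau_e$ differ only by a constant, and $a^n\ge 1/\tau_e-1/\eps^2=1/\tau$.
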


\begin{proof}
By Lemma~\ref{lem:CSS1-effective-step}, one CSS1 step with nominal step $\tau$ is exactly
one fully implicit Euler step with step $\tau_e$. Since $0<\tau_e<\eps^2$,
the uniqueness and monotonicity result for the fully implicit Euler method from Theorem~\ref{thm:FIE-monotone} applies and yields $u^{n+1}\ge u^n$.
Moreover, \eqref{eq:CSS1-effective-IE} yields
\[
    R^{n+1}=\Reps(u^{n+1})=\frac{u^{n+1}-u^n}{\tau_e}\ge0.
\]
The induction follows immediately.
\end{proof}

\subsubsection{Large-step delay}

Although CSS1 is unconditionally energy stable and preserves the monotone direction through Lemma~\ref{lem:CSS1-effective-step}, it does not evolve on the nominal physical time scale. Dividing \eqref{eq:CSS1-effective-IE} by $\tau$ gives
\begin{equation}
    \frac{u^{n+1}-u^n}{\tau} =
    \alpha_{\rm css}(\tau)\Reps(u^{n+1}),
    \qquad
    \alpha_{\rm css}(\tau):=\frac{\tau_e}{\tau}
    =\frac{\eps^2}{\eps^2+\tau}.
    \label{eq:CSS1-slope-compression}
\end{equation}
Thus CSS1 compresses the physical-time velocity by the factor $\alpha_{\rm css}(\tau)$. In particular, $\alpha_{\rm css}(\tau)\to0$ as $\tau\to\infty$. Equivalently, after $n$ steps the nominal time is $t_n=n\tau$, whereas the effective implicit-Euler time is
\begin{equation}
    t_n^{\rm eff}=n\tau_e
    =\frac{\eps^2}{\eps^2+\tau}t_n .
    \label{eq:CSS1-effective-time}
\end{equation}
This identity gives a precise interpretation of the large-step behavior: CSS1 can be monotone in the step index while remaining delayed in physical time.


\begin{theorem}[Large-step limit of CSS1]
\label{thm:CSS1-large-step-limit}
Assume that the first-order convex splitting scheme \eqref{eq:CSS1} is solved with periodic or homogeneous Neumann boundary conditions and that
$u^0(x)\equiv u_0$ with $0<u_0<1$. Let
\(u(t)\) denote the exact homogeneous Allen--Cahn solution of
\eqref{eq:AC-ODE} with \(u(0)=u_0\), and let \(t_1(\tau)=\tau\) be the
nominal physical time after one step. Then
\begin{equation}
    \lim_{\tau\to\infty}u^1
    =(u_0)^{1/3}<1
    =\lim_{\tau\to\infty}u\bigl(t_1(\tau)\bigr).
    \label{eq:CSS1-large-step-limit}
\end{equation}
Consequently, one nominally large CSS1 step does not attain the correct
physical-time equilibrium.
\end{theorem}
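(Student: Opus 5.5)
We reduce to the scalar setting and then pass to the limit in the one-step equation. Since $u^0\equiv u_0$ is constant, the Laplacian of a constant vanishes under periodic or homogeneous Neumann conditions. By Theorem~\ref{thm:CSS1-monotonicity}, or by the unique solvability of the CSS1 step (convex functional), the solution $u^1$ is itself spatially constant. Hence $u^1=:y(\tau)$ is the unique real root of the scalar cubic
\[
  \frac{y-u_0}{\tau}=\frac{1}{\eps^2}\bigl(u_0-y^3\bigr),
  \qquad\text{equivalently}\qquad
  \frac{\eps^2}{\tau}(y-u_0)+y^3-u_0=0 .
\]
The map $y\mapsto \frac{\eps^2}{\tau}y+y^3$ is strictly increasing, so this root is unique. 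To justify that the fully discrete solution coincides with this constant, we note that the constant candidate solves the PDE step and invoke uniqueness.

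Next we locate the root. At $y=u_0$ the left-hand side equals $u_0^3-u_0<0$. At $y=u_0^{1/3}$ it equals $\frac{\eps^2}{\tau}(u_0^{1/3}-u_0)>0$, because $u_0<u_0^{1/3}$ for $0<u_0<1$. Hence
\[
  u_0<y(\tau)<u_0^{1/3}
\]
for every $\tau$. Rearranging gives
\[
  y^3-u_0=-\frac{\eps^2}{\tau}(y-u_0),
\]
and the right-hand side is bounded in absolute value by $\frac{\eps^2}{\tau}$, since $|y-u_0|<1$. Therefore $y(\tau)^3\to u_0$ as $\tau\to\infty$. By continuity of the cube root, $u^1=y(\tau)\to u_0^{1/3}$, and $u_0^{1/3}<1$ because $u_0<1$. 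Alternatively, one can use Lemma~\ref{lem:CSS1-effective-step} with $\tau_e\to\eps^2$; this gives the same equation $(y-u_0)/\eps^2=(y-y^3)/\eps^2$, i.e.\ $y^3=u_0$.

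For the exact solution we use the explicit formula
\[
  u(t)=\frac{u_0}{\sqrt{e^{-2t/\eps^2}+u_0^2\bigl(1-e^{-2t/\eps^2}\bigr)}} .
\]
As $t=\tau\to\infty$, the denominator tends to $u_0$, so $u(\tau)\to 1$. This proves \eqref{eq:CSS1-large-step-limit}, and the consequence follows immediately.

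The only slightly delicate point is the reduction to the constant-in-space root. This requires uniqueness of the CSS1 step at the PDE level, which comes from strict convexity of the associated functional (or from Lemma~\ref{lem:CSS1-effective-step}, since $\tau_e<\eps^2$ and the fully implicit Euler step is uniquely solvable in that regime). The remaining steps are elementary.
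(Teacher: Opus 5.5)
Your proof is correct and follows the same route as the paper. You reduce to the scalar one-step equation, correctly justified by the constant candidate plus uniqueness of the CSS1 step (Theorem~\ref{thm:CSS1-monotonicity} by itself gives only monotonicity, not spatial constancy), then pass to the limit $\tau\to\infty$ and use $u(\tau)\to1$. The only difference is in the limit passage: the paper uses the effective form and a limit-point argument ($\tau_e\to\eps^2$ forces $U^3=u_0$), whereas your bracket $u_0<u^1<u_0^{1/3}$ and the bound $|(u^1)^3-u_0|\le\eps^2/\tau$ give the same limit with an explicit rate.
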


\begin{proof}
For homogeneous data, \eqref{eq:CSS1} becomes
\begin{equation}
    u_0
    =\left(1-\frac{\tau_e}{\eps^2}\right)u^1
    +\frac{\tau_e}{\eps^2}(u^1)^3.
    \label{eq:CSS1-scalar-rho}
\end{equation}
For every $\tau>0$, the right-hand side of \eqref{eq:CSS1-scalar-rho},
viewed as a function of $u^1$, is strictly increasing on $[0,1]$ and
takes the values $0$ and $1$ at the endpoints. Hence $u^1\in(0,1)$ is
uniquely determined. Since
$\tau_e\to\eps^2$ as $\tau\to\infty$, every limit point $U$ of
$u^1$ satisfies $u_0=U^3$. Thus $U=(u_0)^{1/3}$, which proves the first
limit in \eqref{eq:CSS1-large-step-limit}.

The exact homogeneous solution satisfies
\[
    u'(t)=\frac{1}{\eps^2}u(t)\bigl(1-u(t)^2\bigr),
    \qquad u(0)=u_0\in(0,1),
\]
so \(u(t)\to1\) as \(t\to\infty\). Since
\(t_1(\tau)=\tau\to\infty\), the second limit in
\eqref{eq:CSS1-large-step-limit} follows.
\end{proof}

\begin{remark}[Large-step delay versus the discrete long-time limit]
The large-step limit in Theorem~\ref{thm:CSS1-large-step-limit} should not be
confused with the conventional discrete long-time limit. In both cases, the
nominal physical time
$t_n=n\tau$
tends to infinity, but it does so in different ways. If \(n\) is fixed and
\(\tau\to\infty\), then
$t_n^{\rm eff}=n\tau_e\longrightarrow n\eps^2,$
so that a finite number of nominally large CSS1 steps represents only a finite
amount of effective implicit-Euler evolution. In contrast, if \(\tau\) is
fixed and \(n\to\infty\), then
$t_n^{\rm eff}=n\tau_e\longrightarrow\infty.$
Thus, the large-step limit does not contradict the usual discrete long-time
behavior. Rather, it shows that CSS1 may preserve the correct pointwise
direction while substantially lagging behind the dynamics associated with the
nominal physical time. This mismatch is the artificial large-step delay.
\end{remark}

\subsubsection{Second-order convex splitting of Crank--Nicolson/Adams--Bashforth type}

The preceding discussion shows that CSS1 is exceptional: its monotonicity follows from an exact reformulation as a fully implicit Euler step with a smaller effective step. This mechanism is not available for the usual higher-order convex splitting schemes. Following the standard construction reviewed in \cite{tangqiao2020phasefield}, we next consider a second-order Crank--Nicolson/Adams--Bashforth convex splitting method: the convex quartic part is approximated by a modified Crank--Nicolson discrete gradient, whereas the concave quadratic part is extrapolated explicitly.
Define the discrete derivative of the convex part by
\begin{equation}
    D F_c(a,b) := \frac{F_c(a)-F_c(b)}{a-b} =\frac{(a+b)(a^2+b^2)}{4},
    \qquad a\ne b,
    \label{eq:DFc-discrete-gradient}
\end{equation}
with the continuous extension \(D F_c(a,a)=a^3\). With this discrete derivative, the second-order convex splitting scheme for the Allen--Cahn equation \eqref{eq:AC} is given by
\begin{equation}
\begin{aligned}
    \frac{u^{n+1}-u^n}{\tau}
    ={}&\Delta \left(\frac{u^{n+1}+u^n}{2}\right)
    +\frac1{\eps^2}
    \left[
        \frac32u^n-\frac12u^{n-1}-D F_c(u^{n+1},u^n)
    \right],
    \qquad n\ge1.
\end{aligned}
    \label{eq:CSmodCN}
\end{equation}

Here one starting value \(u^1\) is required. In the theoretical counterexample below, \(u^1\) is chosen from the exact homogeneous Allen--Cahn flow so that the starting pair is already in the correct monotone direction. In the numerical tests, the same role is played by a highly resolved monotone fully implicit Euler starter. Thus any subsequent negative increment is caused by the two-step convex splitting update itself, not by an inconsistent starting procedure.

The scheme \eqref{eq:CSmodCN} is a standard unconditionally modified-energy-stable
convex splitting discretization; see \cite{tangqiao2020phasefield,guan2014second,guillen2014second,xu2023lack}.
Its energy law involves the usual quadratic correction in the previous time increment rather than
the original Allen--Cahn energy alone. We use this standard fact only to emphasize that modified-energy
dissipation does not provide a pointwise monotonicity guarantee. The following theorem shows that even an exact monotone start can be followed by an increment with the wrong sign when the time step is sufficiently large.


\begin{theorem}[Large-step loss of pointwise monotonicity]
\label{thm:CSmodCN-nonmonotone}
Consider the second-order convex splitting scheme \eqref{eq:CSmodCN}
under periodic or homogeneous Neumann boundary conditions. There exist
admissible monotone data such that, for all sufficiently large time steps
\(\tau\), the numerical solution generated by the scheme fails to
preserve pointwise monotonicity. Consequently, the scheme is not
unconditionally pointwise monotone.
\end{theorem}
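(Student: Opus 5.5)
We reduce to spatially homogeneous data, where the Laplacian terms vanish and the scheme \eqref{eq:CSmodCN} becomes a scalar two-step recursion. By Theorem~\ref{thm:homogeneous-reduction} and the fact that constants are compatible with periodic or homogeneous Neumann conditions, a spatially constant starting pair produces a spatially constant $u^2$. Any strict decrease $u^2<u^1$ is therefore a genuine PDE counterexample.

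Fix $u^0\equiv u_0\in(0,1)$, which lies in $\Aeps$ by the constant-data example. Take $u^1:=u(\tau)$, the exact homogeneous solution of \eqref{eq:AC-ODE} at time $\tau$. Then $u_0<u^1<1$, so the starting pair is strictly monotone and admissible. Writing $v=u^2$, the step $n=1$ reads
\[
\eps^2\frac{v-u^1}{\tau}=\frac32u^1-\frac12u_0-\frac{(v+u^1)(v^2+(u^1)^2)}{4}.
\]
The right-hand side minus the left-hand side is strictly decreasing in $v$, because the cubic discrete gradient is increasing in $v$. Hence $v$ exists uniquely for every $\tau>0$.

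The key step is to evaluate this strictly decreasing function at $v=u^1$:
\[
\Phi(\tau):=\frac32u^1-\frac12u_0-(u^1)^3 .
\]
We have $u^2<u^1$ if and only if $\Phi(\tau)<0$. As $\tau\to\infty$, $u^1=u(\tau)\to1$, so $\Phi(\tau)\to\frac32-\frac12u_0-1=\frac12(1-u_0)>0$. This sign is positive, so the exact start near equilibrium gives overshoot rather than reversal.

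Because of this, the choice of starting data is the main obstacle. I would next allow $u^1$ to be any value with $u_0<u^1\le 1$ and $\Phi<0$, i.e.
\[
u^1-2(u^1)^3>u_0-2(u^1)^3+\dots
\]
More concretely, $\Phi<0$ means $u_0>3u^1-2(u^1)^3$. The function $g(s)=3s-2s^3$ satisfies $g(s)>s$ on $(0,1)$, so this is incompatible with $u_0<u^1$. Thus for $n=1$ the large-$\tau$ limit $\tau\to\infty$ with the regularized equation cannot reverse immediately.

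The plan therefore shifts to the overshoot mechanism:
\begin{enumerate}
\item As $\tau\to\infty$, $u^2$ converges to the root $V$ of $(V+1)(V^2+1)/4=\frac32-\frac12u_0$. Since the right-hand side exceeds $1$, this gives $V>1$.
\item At step $n=2$, with $u^1\to1$ and $u^2\to V>1$, the limiting equation is $(u^3+V)(\,(u^3)^2+V^2)/4=\frac32V-\frac12$.
\item At $u^3=V$ the left-hand side is $V^3$, and $V^3>\frac32V-\frac12$ for $V>1$. Monotonicity in $u^3$ then forces the limit of $u^3$ to be strictly below $V$.
\end{enumerate}
By continuity in $\tau$, for all sufficiently large $\tau$ we get $u^3<u^2$ strictly. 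This is a wrong-signed increment after an exact monotone start.

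To make the limit passage rigorous, I would use the implicit function theorem or a monotone-bracketing argument on the strictly increasing cubic maps to justify convergence of $u^2$ and $u^3$. I expect this limit bookkeeping to be the only delicate point. If one prefers a reversal already at the first computed step, the alternative is to choose $u_0$ close to $1$ and $u^1=u(\tau)$ and redo the sign check; the overshoot-then-reversal route above is the one I would commit to.
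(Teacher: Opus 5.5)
Your proposal is correct and is essentially the paper's proof. Both use constant data with the exact starter $u^1=u(\tau)$, obtain the overshoot $u^2\to V>1$ with $(V+1)(V^2+1)/4=\tfrac32-\tfrac12u_0$, and then get a strict reversal at the next step because $V^3-\tfrac32V+\tfrac12=\tfrac12(V-1)(2V^2+2V-1)>0$.

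The only difference is that the paper tests the comparison value $u^3=u^2$ directly at finite $\tau$, where the $\eps^2/\tau$ term vanishes, so $u^3<u^2$ iff $(u^2)^3>\tfrac32u^2-\tfrac12u^1$; this spares you the convergence argument for $u^3$. Note also that your own inequality $3s-2s^3>s$ on $(0,1)$ shows that the ``first-step reversal'' alternative you mention at the end cannot work for any monotone starting pair $0<u_0<u^1<1$, whatever $\tau$ and $u_0$ are.
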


\begin{proof}
For simplicity, we exhibit this loss of monotonicity using a constant
initial datum. Fix \(u_0\in(0,1)\), set \(u^0(x)\equiv u_0\), and choose
the exact increasing starter \(u^1(x)\equiv u(\tau)\), where \(u(t)\)
solves \eqref{eq:AC-ODE} with \(u(0)=u_0\). Spatially homogeneous states
are preserved by \eqref{eq:CSmodCN}, so the spatial variable may be
omitted and the scheme reduces to
\begin{equation}
    D F_c(u^{n+1},u^n)
    +\frac{\eps^2}{\tau}(u^{n+1}-u^n)
    =\frac32u^n-\frac12u^{n-1}.
    \label{eq:csmodcn-scalar-limit-form}
\end{equation}
For fixed \(u^n\), the left-hand side is strictly increasing in
\(u^{n+1}\), since its derivative is
\[
    \frac{2(u^{n+1})^2+(u^{n+1}+u^n)^2}{4}
    +\frac{\eps^2}{\tau}>0.
\]
Together with its cubic growth, this implies that each update is unique
and depends continuously on the preceding values.

Since \(u^1=u(\tau)\to1\) as \(\tau\to\infty\), passing to the limit in
the \(n=1\) update gives
$
u^2\longrightarrow U,
D F_c(U,1)=\frac{3-u_0}{2},
$
where \(U\) is the unique solution of the displayed scalar equation.
Because
$
DF_c(1,1)=1<\frac{3-u_0}{2}
$
and \(D F_c(\,\cdot\,,1)\) is strictly increasing, we have \(U>1\).

For the \(n=2\) update, evaluate the left-hand side of
\eqref{eq:csmodcn-scalar-limit-form} at the comparison value
\(u^3=u^2\). It then equals
\[
    D F_c(u^2,u^2)=(u^2)^3.
\]
By the strict monotonicity established above, \(u^3<u^2\) whenever
\(
    (u^2)^3>\frac32u^2-\frac12u^1.
\)
As \(\tau\to\infty\), the difference between the two sides tends to
\[
    U^3-\frac32U+\frac12
    =\frac12(U-1)(2U^2+2U-1)>0.
\]
Hence the inequality, and therefore \(u^3(x)<u^2(x)\) for every
\(x\in\Omega\), holds for all sufficiently large \(\tau\). This proves
the claim.
\end{proof}

\begin{remark}[Finite large-step threshold]
The limit $\tau\to\infty$ is used only to simplify the proof; the scheme is
not evaluated at an infinite time step. Since the limiting inequalities
obtained above are strict, the order-preserving property of limits implies
that they remain valid for all sufficiently large but finite $\tau$.
Consequently, there exists a finite threshold
$\tau_{*,\mathrm{CS}}>0$ such that
\(u^3(x)<u^2(x)\) for \(x\in\Omega,\)
for every $\tau>\tau_{*,\mathrm{CS}}$. An explicit formula for this
threshold is not needed for the present counterexample.
\end{remark}
\begin{remark}[Extension beyond homogeneous data]
The constant datum in Theorem~\ref{thm:CSmodCN-nonmonotone} is used to
make the strict wrong-signed increment explicit. The corresponding
perturbation proof is given in Proposition~\ref{prop:CSmodCN-nonhomogeneous-persistence}. It shows that, for any
fixed finite step in this strict-decrement regime, the inequality
\(u^3<u^2\) persists under sufficiently small smooth admissible
perturbations \(u^0_\delta=\overline u_0+\delta\psi\) satisfying
\(\Delta\psi\not\equiv0\). Hence the loss of pointwise monotonicity is not
confined to the spatially homogeneous ODE reduction; it also occurs when
diffusion is genuinely active.
\end{remark}

\subsection{Stabilized semi-implicit schemes}
\label{sec:stabilized}

We next consider stabilized semi-implicit methods. The presence of a
stabilization term alone does not guarantee energy stability or
pointwise monotonicity; suitable conditions on the stabilization
parameter, and sometimes on the time step, are still required. In the
strongly stabilized regime, the first-order scheme can preserve the
pointwise direction, but its large-step behavior is affected by
artificial damping and a distorted physical time scale. The
second-order stabilized CN/AB scheme exhibits a different limitation:
even in the modified-energy-stable regime, the Adams--Bashforth
extrapolation may reverse the pointwise increment for sufficiently
large time steps.

\subsubsection{First-order stabilized scheme and energy stability}

The first-order stabilized semi-implicit scheme for the Allen--Cahn equation \eqref{eq:AC} is given by
\begin{equation}
    \frac{u^{n+1}-u^n}{\tau}
    =\Delta u^{n+1}
    -\frac1{\eps^2}f(u^n)
    -S(u^{n+1}-u^n),
    \qquad S\ge0,
    \label{eq:stabilized-scheme}
\end{equation}
where \(f(u)=u^3-u\). For data in the invariant interval \([-1,1]\), the strongly stabilized regime
$S\ge \frac{2}{\eps^2}$
gives the standard unconditional energy-stability condition for this method; see
\cite{tangqiao2020phasefield,tangyang2016imex}. The interval invariance needed
for the positive branch is also established below.
This energy property is recalled only to distinguish global energy dissipation from the pointwise
monotonicity considered below. We therefore work directly with the discrete update rather than
with its standard energy estimate.

\subsubsection{Pointwise monotonicity of the first-order stabilized scheme}
We now show that the monotonicity of the first-order stabilized scheme can be understood through a resolvent structure. Define
\begin{equation}
    B^n:=\frac1{\eps^2}\bigl(u^n-(u^n)^3\bigr),
    \qquad
    R^n:=\Delta u^n+B^n=\Reps(u^n),
    \label{eq:stabilized-B-R}
\end{equation}
and set
$w^{n+1}:=u^{n+1}-u^n,
    \lambda:=\frac1\tau+S .$
Then the stabilized scheme \eqref{eq:stabilized-scheme} is equivalent to
\begin{equation}
    (\lambda-\Delta)w^{n+1}=R^n .
    \label{eq:stabilized-increment-equation}
\end{equation}
Equivalently,
\begin{equation}
    w^{n+1} =
    (\lambda-\Delta)^{-1}R^n
    =
    \tau_S(I-\tau_S\Delta)^{-1}R^n,
    \qquad
    \tau_S:=\frac{\tau}{1+S\tau}.
    \label{eq:stabilized-resolvent-form}
\end{equation}
This form makes the monotonicity mechanism transparent: if the residual is nonnegative, then the elliptic resolvent produces a nonnegative increment.

\begin{theorem}[Pointwise monotonicity of the first-order stabilized scheme]
\label{thm:stab1-monotone}
Consider \eqref{eq:stabilized-scheme} with periodic or homogeneous Neumann boundary conditions and $S\ge 2/\eps^2$.
If $0\le u^n\le 1$ and $\Reps(u^n)\ge 0$ in $\Omega$, then for any $\tau>0$,
\[
0\le u^{n+1}\le 1,\quad u^{n+1}\ge u^n,\quad \Reps(u^{n+1})\ge 0\quad\text{in }\Omega.
\]
Hence, for $u_0\in\Aeps$, the sequence $\{u^n\}$ is pointwise nondecreasing and stays in $[0,1]$.
\end{theorem}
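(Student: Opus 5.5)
The argument works directly with the increment equation \eqref{eq:stabilized-increment-equation}, $(\lambda-\Delta)w^{n+1}=R^n$ with $\lambda=1/\tau+S>0$. We prove three properties in turn: the sign of the increment, the upper bound, and the propagation of the residual sign.

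\emph{Step 1 (monotone increment).} Since $R^n=\Reps(u^n)\ge0$, we use the same truncation argument as in Theorem~\ref{thm:FIE-monotone}. Multiply \eqref{eq:stabilized-increment-equation} by $(w^{n+1})^-$ and integrate. Under periodic or homogeneous Neumann conditions this gives
\[
-\|\nabla(w^{n+1})^-\|^2-\lambda\|(w^{n+1})^-\|^2=\int_\Omega R^n(w^{n+1})^-\,dx\ge0 .
\]
Hence $(w^{n+1})^-=0$, so $u^{n+1}\ge u^n\ge0$. This step needs no condition on $S$ or $\tau$, only $\lambda>0$; equivalently, it is the positivity of the resolvent in \eqref{eq:stabilized-resolvent-form}.

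\emph{Step 2 (upper bound $u^{n+1}\le1$).} Rewrite the scheme as
\[
(\lambda-\Delta)u^{n+1}=\lambda u^n+\tfrac{1}{\eps^2}\bigl(u^n-(u^n)^3\bigr)=:g(u^n),
\]
and consider $z:=u^{n+1}-1$. Since $g(1)=\lambda$, we get $(\lambda-\Delta)z=g(u^n)-g(1)$. We have $g'(s)=\lambda+\eps^{-2}(1-3s^2)\ge \lambda-2/\eps^2\ge 1/\tau>0$ on $[0,1]$, using $S\ge2/\eps^2$. So $g$ is increasing on $[0,1]$, and $g(u^n)\le g(1)$ because $0\le u^n\le1$. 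The same truncation argument, now with $z^+$, yields $z\le0$. This is the only place where the stabilization threshold enters, and it is the main point to get right. The bound $1-3s^2\ge-2$ on $[0,1]$ is exactly what $S\ge2/\eps^2$ compensates.

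\emph{Step 3 (propagation of the residual sign).} This is the delicate step. Write $\Reps(u^{n+1})=\Delta u^{n+1}+B^{n+1}$. From the scheme, $\Delta u^{n+1}=w^{n+1}/\tau-B^n+Sw^{n+1}$, so
\[
\Reps(u^{n+1})=\lambda w^{n+1}+\bigl(B^{n+1}-B^n\bigr).
\]
By the mean value theorem, $B^{n+1}-B^n=\eps^{-2}(1-3\xi^2)w^{n+1}$ with $\xi$ between $u^n$ and $u^{n+1}$. By Steps 1--2, $\xi\in[0,1]$, so $\eps^{-2}(1-3\xi^2)\ge-2/\eps^2$. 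Therefore
\[
\Reps(u^{n+1})\ge\Bigl(\tfrac1\tau+S-\tfrac{2}{\eps^2}\Bigr)w^{n+1}\ge\tfrac1\tau w^{n+1}\ge0 ,
\]
using $w^{n+1}\ge0$ and $S\ge2/\eps^2$. This again uses the threshold and the invariance of $[0,1]$ from Step 2, which is why Step 2 must come before Step 3.

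\emph{Conclusion.} Induction on $n$ gives the final claim. For $u_0\in\Aeps$ we have $0<u^0\le1$ and $R^0=\Reps(u_0)\ge0$, so the sequence $\{u^n\}$ is pointwise nondecreasing and stays in $[0,1]$.
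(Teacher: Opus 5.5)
Your proposal is correct and follows the paper's proof essentially step for step. The paper also gets $w^{n+1}\ge0$ from $(\lambda-\Delta)w^{n+1}=\Reps(u^n)$. It then gets $u^{n+1}\le1$ from the maximum principle applied to $(\lambda-\Delta)(1-u^{n+1})=\lambda(1-u^n)-\eps^{-2}u^n\bigl(1-(u^n)^2\bigr)\ge0$, which is exactly your statement $g(u^n)\le g(1)$. Finally it obtains $\Reps(u^{n+1})\ge(\lambda-2/\eps^2)w^{n+1}\ge0$ by the same mean-value argument; the only cosmetic difference is that you write the maximum principle out via the truncation test from Theorem~\ref{thm:FIE-monotone} rather than citing it.
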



\begin{proof}
Set $w^{n+1}:=u^{n+1}-u^n,
    \lambda:=\frac1\tau+S.$
Suppose that
$0\le u^n\le1,\,\Reps(u^n)\ge0.$
The increment equation \eqref{eq:stabilized-increment-equation} is
$
    (\lambda-\Delta)w^{n+1}=\Reps(u^n).
$
Since \(\lambda>0\), the elliptic maximum principle yields
$w^{n+1}\ge0.$
Hence \(u^{n+1}\ge u^n\ge0\).

Moreover, \eqref{eq:stabilized-scheme} can be rewritten as
$
    (\lambda-\Delta)u^{n+1}=\lambda u^n+\frac1{\eps^2}\bigl(u^n-(u^n)^3\bigr).
$
Thus
\begin{equation}
    (\lambda-\Delta)(1-u^{n+1})
    =\lambda(1-u^n)-\frac1{\eps^2}u^n\bigl(1-(u^n)^2\bigr).
    \label{eq:s1}
\end{equation}
For \(0\le u^n\le1\),
\[
    u^n\bigl(1-(u^n)^2\bigr)
    =u^n(1-u^n)(1+u^n)
    \le2(1-u^n).
\]
Since \(\lambda\ge S\ge2/\eps^2\), the right-hand side of \eqref{eq:s1} is nonnegative.
The elliptic maximum principle then gives \(u^{n+1}\le1\).

Finally, the stabilized scheme yields
$
    \lambda w^{n+1}=\Delta u^{n+1}+\frac1{\eps^2}\bigl(u^n-(u^n)^3\bigr).
$
Hence
\[
    \Reps(u^{n+1})
    =\lambda w^{n+1}
    +\frac1{\eps^2}
    \Bigl[u^{n+1}-(u^{n+1})^3-u^n+(u^n)^3\Bigr].
\]
For each \(x\in\Omega\), the mean-value theorem gives
$
    u^{n+1}-(u^{n+1})^3-u^n+(u^n)^3=(1-3\xi_n^2)w^{n+1},
$
where \(\xi_n(x)\) lies between \(u^n(x)\) and \(u^{n+1}(x)\). Since
\(0\le u^n\le u^{n+1}\le1\), then
$1-3\xi_n^2\ge-2.$
Therefore,
$
    \Reps(u^{n+1})\ge\left(\lambda-\frac2{\eps^2}\right)w^{n+1}\ge0.
$
This proves the one-step implication. If \(u_0\in\Aeps\), the hypotheses
hold at \(n=0\), and induction gives the asserted result for all \(n\ge0\).
\end{proof}

\subsubsection{Artificial damping and large-step behavior of the first-order scheme}

Dividing \eqref{eq:stabilized-resolvent-form} by the nominal time step gives
\begin{equation}
    \frac{u^{n+1}-u^n}{\tau}
    =
    \frac{1}{1+S\tau}(I-\tau_S\Delta)^{-1}\Reps(u^n),
    \qquad
    \tau_S=\frac{\tau}{1+S\tau}.
    \label{eq:stabilized-slope}
\end{equation}
Equivalently, the stabilized update is the usual first-order semi-implicit
scheme with the reduced effective step \(\tau_S=\tau/(1+S\tau)\). Thus it contains the damping factor
\begin{equation}
    \alpha_S(\tau):=\frac{1}{1+S\tau}.
    \label{eq:stabilized-alpha}
\end{equation}
For spatially homogeneous data, we identify the constant function \(u^n(x)\)
with its value \(u^n\). Then \eqref{eq:stabilized-resolvent-form} reduces to
\begin{equation}
    u^{n+1}-u^n
    =\frac{\tau}{\eps^2(1+S\tau)}u^n\bigl(1-(u^n)^2\bigr).
    \label{eq:stab-scalar-update}
\end{equation}
\begin{theorem}[Large-step limit of the first-order stabilized scheme]
\label{thm:stabilized-strong-delay}
Let \(u^0(x)\equiv u_0\) with \(0<u_0<1\), and assume that
\(S\ge 2/\eps^2\). Under periodic or homogeneous Neumann boundary
conditions, the first stabilized iterate of \eqref{eq:stabilized-scheme} remains spatially homogeneous;
let \(u(t)\) denote the exact
homogeneous Allen--Cahn solution of \eqref{eq:AC-ODE} with
\(u(0)=u_0\), and let \(t_1(\tau)=\tau\) be the nominal physical time
after one step. Then
\begin{equation}
    \lim_{\tau\to\infty}u^1
    =u_0+\frac{1}{S\eps^2}u_0(1-u_0^2)
    <1
    =\lim_{\tau\to\infty}u\bigl(t_1(\tau)\bigr).
    \label{eq:stabilized-large-step-limit}
\end{equation}
Consequently, one nominally large stabilized step does not attain the
correct physical-time equilibrium.
\end{theorem}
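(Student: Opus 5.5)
The plan follows the proof of Theorem~\ref{thm:CSS1-large-step-limit}, using the explicit scalar form \eqref{eq:stab-scalar-update} of the stabilized update. The proof has four steps.

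\emph{Step 1 (homogeneity is preserved).} If $u^0\equiv u_0$, then $\Delta u^0=0$, so the right-hand side $\Reps(u^0)$ of the increment equation \eqref{eq:stabilized-increment-equation} is the constant $\eps^{-2}u_0(1-u_0^2)$. The operator $\lambda-\Delta$ with $\lambda=1/\tau+S>0$ is invertible under periodic or homogeneous Neumann conditions. Constants satisfy both boundary conditions, so by uniqueness the increment $w^1$ equals that constant divided by $\lambda$. Hence $u^1$ is spatially homogeneous and is given by \eqref{eq:stab-scalar-update} with $n=0$.

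\emph{Step 2 (the limit of $u^1$).} In \eqref{eq:stab-scalar-update} we have $\tau/(1+S\tau)\to 1/S$ as $\tau\to\infty$. Therefore
\[
u^1\to u_0+\frac{1}{S\eps^2}u_0(1-u_0^2).
\]

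\emph{Step 3 (the limit is strictly below $1$).} This is the only step with any content. I would argue as in the proof of Theorem~\ref{thm:stab1-monotone}. For $0<u_0<1$ we have $u_0(1-u_0^2)=u_0(1-u_0)(1+u_0)<2(1-u_0)$, and the inequality is strict because $u_0(1+u_0)<2$. Since $S\eps^2\ge 2$,
\[
\frac{1}{S\eps^2}u_0(1-u_0^2)\le \frac12u_0(1-u_0^2)<1-u_0.
\]
Hence the limit is strictly less than $1$. The strictness of $u_0(1+u_0)<2$ is what yields the strict inequality.

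\emph{Step 4 (the exact solution).} The exact homogeneous solution solves $u'=\eps^{-2}u(1-u^2)$ with $u(0)=u_0\in(0,1)$. From the explicit formula in the introduction, $u(t)\to1$ as $t\to\infty$. Since $t_1(\tau)=\tau\to\infty$, this gives the right-hand limit in \eqref{eq:stabilized-large-step-limit}.

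No serious obstacle is expected. The main care points are the uniqueness argument in Step 1 and keeping the inequality in Step 3 strict.
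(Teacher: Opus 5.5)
Your proposal is correct and follows essentially the same route as the paper. Both reduce to the scalar update \eqref{eq:stab-scalar-update}, pass to the limit via $\tau/(1+S\tau)\to 1/S$, use $S\eps^2\ge 2$ to bound the limit by $u_0+\frac12u_0(1-u_0^2)<1$, and invoke $u(t)\to1$; your uniqueness argument for homogeneity and your strict bound $u_0(1-u_0^2)<2(1-u_0)$ just make explicit two details the paper states without comment.
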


\begin{proof}
For homogeneous data, \eqref{eq:stab-scalar-update} with \(n=0\) gives
$
    u^1=u_0+\frac{\tau}{\eps^2(1+S\tau)}u_0(1-u_0^2).
$
Hence
$
    \lim_{\tau\to\infty}u^1=u_0+\frac{1}{S\eps^2}u_0(1-u_0^2).
$
Since \(S\eps^2\ge2\) and \(0<u_0<1\),
we have $u_0+\frac{1}{S\eps^2}u_0(1-u_0^2)\le u_0+\frac12u_0(1-u_0^2)<1.$
The exact homogeneous solution satisfies
$
    u'(t)=\frac{1}{\eps^2}u(t)\bigl(1-u(t)^2\bigr),
    \, u(0)=u_0\in(0,1),
$
so \(u(t)\to1\) as \(t\to\infty\). Since
\(t_1(\tau)=\tau\to\infty\), the second limit in
\eqref{eq:stabilized-large-step-limit} follows.
\end{proof}

\begin{remark}[Monotonicity preservation versus large-step delay]
The first-order convex-splitting and first-order stabilized schemes share
a similar large-step behavior. As shown in
Theorem~\ref{thm:CSS1-large-step-limit} and
Theorem~\ref{thm:stabilized-strong-delay}, both schemes preserve the
correct pointwise monotone direction for every \(\tau>0\); however, as
\(\tau\to\infty\), their one-step numerical solutions do not attain the
correct equilibrium on the nominal physical-time scale. Thus, pointwise
monotonicity preservation alone does not guarantee faithful physical-time
relaxation.
\end{remark}

\subsubsection{A second-order stabilized CN/AB scheme}

The second-order stabilized Crank--Nicolson/Adams--Bashforth scheme for the Allen--Cahn equation \eqref{eq:AC} is given by
\begin{equation}
    \frac{u^{n+1}-u^n}{\tau}
    =\Delta\left(\frac{u^{n+1}+u^n}{2}\right)
    -\frac{S\tau}{\eps^2}(u^{n+1}-u^n)
    -\frac1{\eps^2}
    \left(\frac32 f(u^n)-\frac12 f(u^{n-1})\right),
    \qquad n\ge1.
    \label{eq:stab-CNAB}
\end{equation}
This stabilized CN/AB scheme was studied by Feng--Tang--Yang
\cite{feng2013stabilized}. Their modified-energy estimate assumes
\(|f'(s)|\le L\) on the relevant range (or uses the standard globally
Lipschitz truncation of the cubic nonlinearity). If
\(
S\ge \frac{L^2}{4\eps^2},
\)
the scheme is unconditionally stable with respect
to a modified energy. Nevertheless, the counterexample below shows that the
Adams--Bashforth extrapolation may reverse the
pointwise increment after an exact increasing start. Hence modified-energy
stability does not imply pointwise monotonicity.

\begin{theorem}[Large-step loss of pointwise monotonicity]
\label{thm:stab-CNAB-nonmonotone}
Consider the stabilized CN/AB scheme \eqref{eq:stab-CNAB} under periodic
or homogeneous Neumann boundary conditions, with any fixed \(S\ge0\). There exist admissible
monotone data such that, for all sufficiently large time steps \(\tau\),
the numerical solution generated by the scheme fails to preserve
pointwise monotonicity. Consequently, the scheme is not unconditionally
pointwise monotone.
\end{theorem}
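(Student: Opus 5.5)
The plan is to mirror the proof of Theorem~\ref{thm:CSmodCN-nonmonotone}: use spatially homogeneous data with an exact increasing starter, and show that the very first two-step update already has the wrong sign. Fix $u_0\in(0,1)$ and set $u^0(x)\equiv u_0$. By the constant-initial-data example, this datum lies in $\Aeps$. Choose the exact starter $u^1(x)\equiv u(\tau)$, where $u(t)$ solves \eqref{eq:AC-ODE} with $u(0)=u_0$; then $u^1>u^0$. Since the Laplacian annihilates constants, the stabilized CN/AB scheme \eqref{eq:stab-CNAB} maps spatially homogeneous pairs to spatially homogeneous iterates. We therefore drop $x$, and the scheme becomes the explicit scalar recursion
\[
\Bigl(\frac1\tau+\frac{S\tau}{\eps^2}\Bigr)(u^{n+1}-u^n)=-\frac1{\eps^2}\Bigl(\frac32 f(u^n)-\frac12 f(u^{n-1})\Bigr).
\]
Because $S\ge0$, the coefficient on the left is strictly positive. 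Hence
\[
u^{n+1}-u^n=-\frac{\tau}{\eps^2+S\tau^2}\,g^n,
\qquad g^n:=\frac32 f(u^n)-\frac12 f(u^{n-1}),
\]
and the sign of the increment is exactly the sign of $-g^n$. This update is explicit in $u^{n+1}$, so no solvability argument is needed, unlike the convex splitting case.

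Next take $n=1$. We have $g^1=\frac32 f(u(\tau))-\frac12 f(u_0)$. Since $u(\tau)\to1$ as $\tau\to\infty$ and $f(1)=0$, continuity of $f$ gives
\[
g^1\longrightarrow -\tfrac12 f(u_0)=\tfrac12 u_0(1-u_0^2)>0 .
\]
The limit is strictly positive, so $g^1>0$ for all sufficiently large $\tau$. Therefore $u^2-u^1=-\tau g^1/(\eps^2+S\tau^2)<0$ everywhere in $\Omega$. That is, after the strictly increasing start $u^0<u^1$, the scheme produces $u^2(x)<u^1(x)$ for every $x$. This holds for any fixed $S\ge0$, including the modified-energy-stable regime $S\ge L^2/(4\eps^2)$, which proves the theorem.

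The only delicate point is to make sure the reversal is driven by the Adams--Bashforth extrapolation and not by the stabilization. For $S>0$ the increment tends to zero like $1/\tau$, but only its sign matters, and the positive prefactor $\tau/(\eps^2+S\tau^2)$ cannot change that sign. As in the remark after Theorem~\ref{thm:CSmodCN-nonmonotone}, strictness of the limiting inequality gives a finite threshold $\tau_*$ beyond which the reversal occurs. One could make $\tau_*$ explicit by bounding $1-u(\tau)$ with the closed-form homogeneous solution, but this is not needed for the statement.
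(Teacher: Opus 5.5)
Your proposal is correct and follows essentially the same route as the paper: a homogeneous datum, the exact starter $u^1=u(\tau)$, the explicit scalar recursion with positive prefactor $\tau/(\eps^2+S\tau^2)$, and a sign argument for $\frac32 f(u^1)-\frac12 f(u_0)$ at $n=1$. The only difference is that the paper replaces your limit argument with the closed-form bound $\frac{-f(u^1)}{-f(u_0)}\le e^{-2\tau/\eps^2}/u_0^3$, which gives the explicit threshold $\tau_{*,\mathrm{stab}}=\frac{\eps^2}{2}\log(3/u_0^3)$ that Proposition~\ref{prop:stab-CNAB-nonhomogeneous-persistence} later quotes.
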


\begin{proof}
For simplicity, we demonstrate the loss of monotonicity using a constant
initial datum. Fix \(u_0\in(0,1)\), set \(u^0(x)\equiv u_0\), and choose
the exact increasing starter \(u^1(x)\equiv u(\tau)\), where \(u(t)\)
solves \eqref{eq:AC-ODE} with \(u(0)=u_0\). Since spatially homogeneous
states are preserved by \eqref{eq:stab-CNAB}, the spatial variable may be
omitted, and the scheme reduces to
\begin{equation}
    u^{n+1}-u^n
    =-\frac{\tau}{\eps^2+S\tau^2}
    \left(\frac32 f(u^n)-\frac12 f(u^{n-1})\right),
    \qquad n\ge1.
    \label{eq:stab-CNAB-scalar-threshold}
\end{equation}
The exact starter is
\begin{equation}
    u^1=u(\tau)
    =\frac1{\sqrt{1+(u_0^{-2}-1)e^{-2\tau/\eps^2}}},
    \qquad u_0<u^1<1.
    \label{eq:stab-CNAB-exact-start-formula}
\end{equation}
Set \(B:=u_0^{-2}-1\) and \(z:=e^{-2\tau/\eps^2}\). Then
$
    \frac{-f(u^1)}{-f(u_0)}
    =\frac{z}{u_0^3(1+Bz)^{3/2}}
    \le\frac{z}{u_0^3}.
$
Hence, if
\begin{equation}
    \tau>\tau_{*,\mathrm{stab}}(u_0,\eps)
    :=\frac{\eps^2}{2}\log\left(\frac{3}{u_0^3}\right),
    \label{eq:stab-CNAB-explicit-threshold}
\end{equation}
then \(z<u_0^3/3\), and therefore
$
    \frac32 f(u^1)-\frac12 f(u_0)>0.
$
Taking \(n=1\) in \eqref{eq:stab-CNAB-scalar-threshold} now gives
\[
    u^2-u^1
    =-\frac{\tau}{\eps^2+S\tau^2}
    \left(\frac32 f(u^1)-\frac12 f(u_0)\right)<0.
\]
Thus \(u^2(x)<u^1(x)\) for every \(x\in\Omega\) whenever \(\tau\) is
sufficiently large, which proves the claim.
For any fixed \(S>0\), one also has \(u^2-u^1\to0\) as
\(\tau\to\infty\). Hence, when \(S\) is chosen in the modified-energy-stable
regime, the values used in this counterexample remain in \([-1,1]\) for all
sufficiently large \(\tau\); the same construction therefore applies to the
standard globally Lipschitz truncation of \(f\).
\end{proof}
\begin{remark}[Extension beyond homogeneous data]
The homogeneous construction in Theorem~\ref{thm:stab-CNAB-nonmonotone}
provides a strict negative increment. The corresponding perturbation proof
is given in Proposition~\ref{prop:stab-CNAB-nonhomogeneous-persistence}. By continuous
dependence of the exact PDE starter and the stabilized elliptic update, it
shows that
this sign reversal persists for sufficiently small smooth admissible
perturbations with \(\Delta u^0_\delta\not\equiv0\). Thus the counterexample
remains valid for genuinely spatially nonhomogeneous data.
\end{remark}

The first- and second-order stabilized schemes therefore play different
roles in the present analysis. In the strongly stabilized regime, the
first-order scheme can preserve the pointwise direction, but only at the
cost of artificial damping and a distorted physical time scale. The
second-order stabilized CN/AB scheme exhibits a different limitation:
even when the stabilization parameter is chosen in the
modified-energy-stable regime, the Adams--Bashforth extrapolation of the
nonlinear term may reverse the pointwise increment after an exact
increasing start. Thus stabilization and modified-energy dissipation do
not, by themselves, guarantee preservation of pointwise monotonicity.

\subsection{Invariant energy quadratization scheme}
\label{sec:IEQ}

The first-order IEQ scheme for the Allen--Cahn equation \eqref{eq:AC} is given by
\begin{equation}
\left\{
\begin{aligned}
    \frac{u^{n+1}-u^n}{\tau}
    &=\Delta u^{n+1}-\frac{1}{\eps^2}q^{n+1}H_n,\\
    \frac{q^{n+1}-q^n}{\tau}
    &=\frac12H_n\frac{u^{n+1}-u^n}{\tau},
\end{aligned}
\right.
\qquad n\ge0,
\label{eq:IEQ-u-scheme}
\end{equation}
where
\begin{equation}
    Q_n:=\sqrt{F(u^n)+C_{\rm IEQ}},
    \qquad
    H_n:=\frac{f(u^n)}{Q_n},
    \qquad
    C_{\rm IEQ}>0,
    \qquad
    q^0=Q_0.
    \label{eq:IEQ-QH}
\end{equation}
For later use, define the IEQ scaling factor by
$\vartheta_{n+1}:=\frac{q^{n+1}}{Q_n}.$
Then the phase equation in \eqref{eq:IEQ-u-scheme} becomes
\begin{equation}
    \frac{u^{n+1}-u^n}{\tau}
    =\Delta u^{n+1}
    +\frac{\vartheta_{n+1}}{\eps^2}
    \bigl(u^n-(u^n)^3\bigr).
    \label{eq:IEQ-scaled-form}
\end{equation}
The first-order IEQ scheme \eqref{eq:IEQ-u-scheme} is unconditionally stable
with respect to its standard modified energy; see \cite{yang2020convergence}.
This global property does not by itself control the sign of the pointwise phase
increment.

\subsubsection{Large-step failure of pointwise monotonicity for IEQ}

\begin{theorem}[Large-step loss of pointwise monotonicity]
\label{thm:IEQ-large-step-loss}
Consider the first-order IEQ scheme \eqref{eq:IEQ-u-scheme} under
periodic or homogeneous Neumann boundary conditions. There exist
admissible monotone data such that, for all sufficiently large time steps
\(\tau\), the numerical solution generated by the scheme fails to
preserve pointwise monotonicity. Consequently, the scheme is not
unconditionally pointwise monotone.
\end{theorem}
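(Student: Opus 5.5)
The plan is to use spatially homogeneous data, as in the proofs of Theorems~\ref{thm:CSmodCN-nonmonotone} and~\ref{thm:stab-CNAB-nonmonotone}. The first IEQ step is then a genuine increase. Because the auxiliary variable is not tied to the phase variable, that step overshoots the equilibrium $u=1$ when $\tau$ is large. The second step must then decrease, because the sign of $f(u^1)$ has flipped while $q^1$ stays positive.

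\textbf{Reduction to a scalar map.} Fix $u_0\in(0,1)$, take $u^0(x)\equiv u_0$ and $q^0=Q_0$. The Laplacian vanishes on constants, so \eqref{eq:IEQ-u-scheme} keeps homogeneous states homogeneous and becomes a scalar linear system at each step. Write $w^{n+1}:=u^{n+1}-u^n$ and eliminate $q^{n+1}=q^n+\tfrac12H_nw^{n+1}$. This gives the explicit formulas
\[
w^{n+1}=-\frac{(\tau/\eps^2)H_nq^n}{1+\tau H_n^2/(2\eps^2)},\qquad
q^{n+1}=\frac{q^n}{1+\tau H_n^2/(2\eps^2)} .
\]
Two structural facts follow immediately. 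First, $q^n>0$ for all $n$ by induction from $q^0=Q_0>0$. Second, $\operatorname{sign}(w^{n+1})=-\operatorname{sign}(f(u^n))$ whenever $H_n\neq0$. In particular $w^1>0$, since $f(u_0)<0$, so the first step is increasing, consistent with admissible monotone data.

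\textbf{Overshoot in the first step.} As $\tau\to\infty$,
\[
w^1\to -\frac{2Q_0}{H_0}=\frac{2\bigl(F(u_0)+C_{\rm IEQ}\bigr)}{u_0(1-u_0^2)} .
\]
Hence $u^1\to u_0+2(F(u_0)+C_{\rm IEQ})/(u_0(1-u_0^2))$. I want this limit to be strictly greater than $1$. That holds if $2(F(u_0)+C_{\rm IEQ})>u_0(1-u_0)(1-u_0^2)$. For fixed $C_{\rm IEQ}>0$, this is guaranteed by taking $u_0$ close enough to $1$: the left side is at least $2C_{\rm IEQ}$, while the right side tends to $0$. (It may in fact hold for all $u_0\in(0,1)$ using $F(u_0)=(1-u_0^2)^2/4$, but the near-$1$ choice suffices.) Since the limit inequality is strict, $u^1>1$ for all sufficiently large $\tau$.

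\textbf{Sign reversal in the second step.} For such $\tau$ we have $f(u^1)>0$, so $H_1>0$, and $q^1>0$. The formula for $w^2$ then gives $u^2-u^1=w^2<0$, uniformly in $x$. So after a pointwise increasing step, the scheme produces a wrong-signed pointwise increment, which proves the theorem.

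The main obstacle is choosing the data so that the overshoot $u^1>1$ is strict in the limit. Everything else is exact algebra plus continuity in $\tau$, in the same spirit as the remark on finite large-step thresholds. A secondary point to check is that the limiting $u^1$ is finite and differs from $1$, so that $H_1\neq0$ and the sign conclusion for $w^2$ is strict.
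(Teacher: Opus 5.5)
Your proposal is correct and follows essentially the paper's proof: you take homogeneous data with $q^0=Q_0$, show the first step overshoots to $u^1>1$ for large $\tau$, and then use $q^1>0$ and $f(u^1)>0$ to force $u^2<u^1$. The only difference is that the paper gives the explicit threshold $\tau_{*,\mathrm{IEQ}}$, which is finite for every $u_0\in(0,1)$, where you pass to the limit $\tau\to\infty$; this confirms that your restriction to $u_0$ near $1$ is unnecessary, since $2F(u_0)=\tfrac12(1+u_0)(1-u_0)(1-u_0^2)>u_0(1-u_0)(1-u_0^2)$ always.
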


\begin{proof}
For simplicity, we construct the counterexample from a constant initial
datum. Fix \(u_0\in(0,1)\), set \(u^0(x)\equiv u_0\), and take
\(q^0(x)=Q_0(x)\). The homogeneous data are preserved by
\eqref{eq:IEQ-u-scheme}, so all iterates are independent of \(x\).
Taking \(n=0\), using \(q^0=Q_0\), and eliminating \(q^1\) gives
\begin{equation}
    \vartheta_1
    =\left(1+
    \frac{\tau u_0^2(1-u_0^2)^2}{2\eps^2Q_0^2}\right)^{-1}>0,
    \qquad
    u^1
    =u_0+\frac{\tau u_0(1-u_0^2)/\eps^2}
    {1+\tau u_0^2(1-u_0^2)^2/(2\eps^2Q_0^2)}.
    \label{eq:IEQ-u1-tau}
\end{equation}
Since
$
    Q_0^2=F(u_0)+C_{\mathrm{IEQ}}=\frac14(1-u_0^2)^2+C_{\mathrm{IEQ}},
$
we have \(u^1>1\) whenever
\begin{equation}
    \tau>\tau_{*,\mathrm{IEQ}}(u_0,\eps,C_{\mathrm{IEQ}})
    :=
    \frac{\eps^2\bigl((1-u_0^2)^2+4C_{\mathrm{IEQ}}\bigr)}
    {u_0(1-u_0)(1-u_0^2)^2+4u_0(1+u_0)C_{\mathrm{IEQ}}}.
    \label{eq:IEQ-thm-threshold}
\end{equation}
It follows that
$
    u^1-(u^1)^3<0,\, q^1=\vartheta_1Q_0>0.
$
Combining the two IEQ equations at \(n=1\) yields
\[
    \vartheta_2
    =\frac{2q^1Q_1}
    {2Q_1^2+\frac{\tau}{\eps^2}
    \bigl(u^1-(u^1)^3\bigr)^2}>0.
\]
Therefore, \eqref{eq:IEQ-scaled-form} gives
\[
    u^2-u^1
    =\frac{\tau\vartheta_2}{\eps^2}
    \bigl(u^1-(u^1)^3\bigr)<0.
\]
Thus \(u^2(x)<u^1(x)\) for every \(x\in\Omega\) whenever \(\tau\) is
sufficiently large, which proves the claim.
\end{proof}
\begin{remark}[Extension beyond homogeneous data]
Although Theorem~\ref{thm:IEQ-large-step-loss} is derived on the homogeneous
invariant class, the strict decrement is stable under spatial perturbation.
The corresponding proof in Proposition~\ref{prop:IEQ-nonhomogeneous-persistence}
verifies the continuous dependence of both the phase variable and the IEQ
auxiliary variable and proves persistence of \(u^2<u^1\) for sufficiently small smooth admissible
perturbations with nonzero diffusion.
\end{remark}

\subsection{Scalar auxiliary variable scheme}
\label{sec:SAV}

The first-order SAV scheme for the Allen--Cahn equation \eqref{eq:AC} is given by
\begin{equation}
\left\{
\begin{aligned}
    \frac{u^{n+1}-u^n}{\tau}
    &=\Delta u^{n+1}-\frac{r^{n+1}}{S_n}\frac1{\eps^2}f(u^n),\\
    \frac{r^{n+1}-r^n}{\tau}
    &=\frac{1}{2S_n}\left(\frac1{\eps^2}f(u^n),\frac{u^{n+1}-u^n}{\tau}\right),
\end{aligned}
\right.
\qquad n\ge0,
\label{eq:SAV-u}
\end{equation}
where
\begin{equation}
    S_n:=\left(\int_\Omega\frac1{\eps^2}F(u^n)\,dx+C_0\right)^{1/2},
    \qquad
    C_0>0,
    \qquad
    r^0=S_0.
    \label{eq:Sn}
\end{equation}
For later use, define the SAV scaling factor by
$\theta_{n+1}:=\frac{r^{n+1}}{S_n}.$
Then the phase equation in \eqref{eq:SAV-u} becomes
\begin{equation}
    \frac{u^{n+1}-u^n}{\tau}
    =\Delta u^{n+1}
    +\frac{\theta_{n+1}}{\eps^2}
    \bigl(u^n-(u^n)^3\bigr).
    \label{eq:SAV-monotone-form}
\end{equation}
The first-order SAV scheme \eqref{eq:SAV-u} is unconditionally stable with
respect to its standard modified energy; see \cite{shen2018scalar}. This
modified-energy property does not by itself control the sign of the pointwise
phase increment.

\subsubsection{Large-step failure of pointwise monotonicity for SAV}

\begin{theorem}[Large-step loss of pointwise monotonicity]
\label{thm:SAV-large-step-loss}
Consider the first-order SAV scheme \eqref{eq:SAV-u} under periodic or
homogeneous Neumann boundary conditions. There exist admissible monotone
data such that, for all sufficiently large time steps \(\tau\), the
numerical solution generated by the scheme fails to preserve pointwise
monotonicity. Consequently, the scheme is not unconditionally pointwise
monotone.
\end{theorem}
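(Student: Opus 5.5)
The plan is to follow the IEQ argument of Theorem~\ref{thm:IEQ-large-step-loss} closely. The only new feature is that the SAV variable is global, so integrals over $\Omega$ enter through $S_n$ and through the inner product. First, fix $u_0\in(0,1)$, set $u^0(x)\equiv u_0$ (which lies in $\Aeps$ by the constant-data example) and $r^0=S_0$. Spatially homogeneous states are preserved by \eqref{eq:SAV-u}: $\Delta$ annihilates constants, and the scalar $r^{n+1}$ multiplies a constant function. So all iterates are constants, and by \eqref{eq:SAV-monotone-form}
\[
u^{n+1}-u^n=\tau\theta_{n+1}g_n,\qquad g_n:=\frac{1}{\eps^2}\bigl(u^n-(u^n)^3\bigr),
\]
with $S_n^2=|\Omega|F(u^n)/\eps^2+C_0$.

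Second, I will eliminate $r^{1}$ at $n=0$. Since $f(u^0)/\eps^2=-g_0$, the second equation of \eqref{eq:SAV-u} gives $\theta_1S_0=S_0-|\Omega|\tau\theta_1g_0^2/(2S_0)$. Hence
\[
\theta_1=\Bigl(1+\frac{\tau|\Omega|g_0^2}{2S_0^2}\Bigr)^{-1}>0,\qquad
u^1=u_0+\frac{\tau g_0}{1+\tau|\Omega|g_0^2/(2S_0^2)}.
\]
Letting $\tau\to\infty$ gives
\[
u^1\to u_0+\frac{2S_0^2}{|\Omega|g_0}\ge u_0+\frac{2F(u_0)}{u_0(1-u_0^2)}=u_0+\frac{1-u_0^2}{2u_0}=\frac{1+u_0^2}{2u_0}>1,
\]
where the inequality comes from dropping $C_0>0$. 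Since this limit is strictly larger than $1$, we get $u^1>1$ for all sufficiently large finite $\tau$. One could also extract an explicit threshold analogous to \eqref{eq:IEQ-thm-threshold} by solving the linear inequality in $\tau$. Consequently $g_1<0$, and $r^1=\theta_1S_0>0$.

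Third, I will repeat the elimination at $n=1$. This gives $\theta_2S_1=r^1-|\Omega|\tau\theta_2g_1^2/(2S_1)$, that is,
\[
\theta_2=\frac{2r^1S_1}{2S_1^2+\tau|\Omega|g_1^2}>0.
\]
Then $u^2-u^1=\tau\theta_2g_1<0$ at every $x\in\Omega$, even though $u^1-u^0>0$ and $u^0\in\Aeps$. This is the required strict reversal of the pointwise increment.

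I expect the main obstacle to be the second step: showing that the first SAV step overshoots $1$ no matter how large $C_0$ or $|\Omega|$ is. The key point is the lower bound $S_0^2\ge|\Omega|F(u_0)/\eps^2$, which reduces the question to the elementary inequality $(1+u_0^2)/(2u_0)>1$. The positivity of $r^1$ and of $\theta_2$ then follows by direct algebra.
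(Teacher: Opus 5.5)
Your proposal is correct and follows essentially the same route as the paper: constant datum $u_0\in(0,1)$ with $r^0=S_0$, elimination of $r^1$ to get $\theta_1$ and $u^1$, overshoot $u^1>1$ for large $\tau$, and then $\theta_2=2r^1S_1/(2S_1^2+\tau|\Omega|g_1^2)>0$, which forces $u^2-u^1=\tau\theta_2 g_1<0$. The only difference is cosmetic: you get $u^1>1$ from the $\tau\to\infty$ limit together with $S_0^2\ge|\Omega|F(u_0)/\eps^2$, while the paper states the equivalent explicit threshold $\tau_{*,\mathrm{SAV}}$ obtained by solving the linear inequality in $\tau$.
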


\begin{proof}
For simplicity, we construct the counterexample from a constant initial
datum. Fix \(u_0\in(0,1)\), set \(u^0(x)\equiv u_0\), and take
\(r^0=S_0\). The homogeneous data are preserved by \eqref{eq:SAV-u}, so
all phase iterates are independent of \(x\). Taking \(n=0\), using
\(r^0=S_0\), and eliminating \(r^1\) gives
\begin{equation}
    \theta_1
    =\left(1+
    \frac{\tau|\Omega|u_0^2(1-u_0^2)^2}{2\eps^4S_0^2}\right)^{-1}>0,
    \qquad
    u^1
    =u_0+\frac{\tau u_0(1-u_0^2)/\eps^2}
    {1+\tau|\Omega|u_0^2(1-u_0^2)^2/(2\eps^4S_0^2)}.
    \label{eq:SAV-u1-tau}
\end{equation}
Since
\[
    S_0^2=\frac{|\Omega|}{\eps^2}F(u_0)+C_0
    =\frac{|\Omega|}{4\eps^2}(1-u_0^2)^2+C_0,
\]
we have \(u^1>1\) whenever
\begin{equation}
    \tau>\tau_{*,\mathrm{SAV}}(u_0,\eps,C_0,|\Omega|)
    :=
    \frac{\eps^2\bigl(|\Omega|(1-u_0^2)^2+4\eps^2C_0\bigr)}
    {|\Omega|u_0(1-u_0)(1-u_0^2)^2
    +4u_0(1+u_0)\eps^2C_0}.
    \label{eq:SAV-thm-threshold}
\end{equation}
It follows that
$u^1-(u^1)^3<0,\, r^1=\theta_1S_0>0.
$
Combining the two SAV equations at \(n=1\) yields
\[
    \theta_2
    =\frac{2r^1S_1}
    {2S_1^2+\frac{\tau|\Omega|}{\eps^4}
    \bigl(u^1-(u^1)^3\bigr)^2}>0.
\]
Therefore, \eqref{eq:SAV-monotone-form} gives
$
    u^2-u^1=\frac{\tau\theta_2}{\eps^2}\bigl(u^1-(u^1)^3\bigr)<0.
$
Thus \(u^2(x)<u^1(x)\) for every \(x\in\Omega\) whenever \(\tau\) is
sufficiently large, which proves the claim.
\end{proof}
\begin{remark}[Extension beyond homogeneous data]
The strict homogeneous sign reversal in Theorem~\ref{thm:SAV-large-step-loss}
is likewise robust. The corresponding proof in Proposition~\ref{prop:SAV-nonhomogeneous-persistence} uses the coercivity of the SAV phase
update together with continuous dependence of the scalar auxiliary variable
to show that \(u^2<u^1\) persists for sufficiently small smooth
admissible perturbations satisfying \(\Delta u^0_\delta\not\equiv0\).
\end{remark}

\section{Persistence under spatially nonhomogeneous admissible perturbations}
\label{sec:nonhomogeneous-persistence}

The four counterexamples in the main text are first constructed on the
spatially homogeneous invariant class because the ODE reduction makes the
strict wrong-signed increments explicit. We now show that each strict
inequality persists under a sufficiently small smooth spatial perturbation.
Thus the counterexamples remain valid when the diffusion term is genuinely
active.

Throughout this section, $C_B^{2,\alpha}(\overline\Omega)$ denotes the
periodic H\"older space in the periodic case and the subspace of
$C^{2,\alpha}(\overline\Omega)$ satisfying $\partial_{\boldsymbol n}v=0$
on $\partial\Omega$ in the homogeneous Neumann case. Let
$\overline u_0\in(0,1)$ and choose a nonconstant
$\psi\in C_B^{2,\alpha}(\overline\Omega)$ such that
$\Delta\psi\not\equiv0$. For
$u^0_\delta=\overline u_0+\delta\psi,$
we have, as $\delta\to0$,
\begin{equation}
\begin{aligned}
    \|u^0_\delta-\overline u_0\|_{C^0(\overline\Omega)}&\longrightarrow0,\\
    \left\|\Reps(u^0_\delta)
    -\eps^{-2}\overline u_0(1-\overline u_0^2)\right\|_{C^0(\overline\Omega)}
    &\longrightarrow0.
\end{aligned}
\label{eq:common-perturbation-margin}
\end{equation}
Because the limiting residual is a strictly positive constant, there is a
number $\delta_{\rm adm}>0$ such that
\begin{equation}
    u^0_\delta\in\Aeps,
    \qquad
    \Delta u^0_\delta=\delta\Delta\psi\not\equiv0,
    \qquad 0<|\delta|<\delta_{\rm adm}.
\label{eq:common-perturbation-admissible}
\end{equation}
Consequently, Theorem~\ref{thm:exact-monotone-growth} implies that the exact
Allen--Cahn solution issued from $u^0_\delta$ is pointwise nondecreasing for
all later times. The remaining task in each case is to verify continuous
dependence of the numerical update on the perturbed data. The strict
homogeneous decrement is then preserved by uniform convergence.

\subsection{Second-order convex splitting}
\label{sec:CSmodCN-nonhomogeneous-persistence}

\begin{proposition}[Persistence of the CSmodCN sign reversal under nonhomogeneous perturbations]
\label{prop:CSmodCN-nonhomogeneous-persistence}
Let $\overline u_0\in(0,1)$, and fix a finite time step $\tau$ for which
the homogeneous construction in Theorem~\ref{thm:CSmodCN-nonmonotone}
produces
$\overline u^{3}-\overline u^{2}=-\eta_{\rm CS}<0.$
Set
\[
    u^0_\delta=\overline u_0+\delta\psi,
    \qquad
    u^1_\delta=\mathcal S_\tau u^0_\delta,
\]
where $\mathcal S_t$ is the exact Allen--Cahn solution operator, and compute
$u^2_\delta,u^3_\delta$ from \eqref{eq:CSmodCN}. Then there exists
$\delta_0>0$ such that, whenever $0<|\delta|<\delta_0$,
\begin{equation}
    u^0_\delta\in\Aeps,
    \qquad
    \Delta u^0_\delta\not\equiv0,
    \qquad
    \max_{x\in\overline\Omega}
    \bigl(u^3_\delta(x)-u^2_\delta(x)\bigr)<0.
\label{eq:CSmodCN-perturbation-conclusion}
\end{equation}
Thus the exact PDE trajectory is pointwise nondecreasing, whereas the
second-order convex splitting scheme produces a strictly wrong-signed
increment for genuinely nonhomogeneous data.
\end{proposition}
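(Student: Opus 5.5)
The plan is a continuity argument in $C^0(\overline\Omega)$ (in fact in $C^{2,\alpha}_B$), propagated through the three maps $u^0\mapsto u^1\mapsto u^2\mapsto u^3$. The first two items of \eqref{eq:CSmodCN-perturbation-conclusion} are already contained in \eqref{eq:common-perturbation-admissible}, so I would take $\delta_0\le\delta_{\rm adm}$. The homogeneous iterates $\overline u^1=u(\tau)$, $\overline u^2$, $\overline u^3$ from Theorem~\ref{thm:CSmodCN-nonmonotone} are constants. It therefore suffices to show
\[
\|u^k_\delta-\overline u^k\|_{C^0(\overline\Omega)}\to0 \quad (k=1,2,3)\quad\text{as }\delta\to0 .
\]
Granting this, $\max_x(u^3_\delta-u^2_\delta)\le-\eta_{\rm CS}+\|u^3_\delta-\overline u^3\|_{C^0}+\|u^2_\delta-\overline u^2\|_{C^0}<-\eta_{\rm CS}/2$ for $|\delta|$ small.

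\emph{Step 1 (exact starter).} The Allen--Cahn solution operator $\mathcal S_\tau$ is continuous at $\overline u_0$. Concretely, $e=\mathcal S_t u^0_\delta-\mathcal S_t\overline u_0$ solves $e_t=\Delta e+c\,e$, where $c$ is bounded because the maximum principle keeps both solutions in $[-1,1]$ once $|\delta|\|\psi\|_\infty$ is small. The parabolic comparison argument used in the proof of Theorem~\ref{thm:exact-monotone-growth} then gives
\[
\|e(\tau)\|_{C^0}\le e^{K\tau}|\delta|\,\|\psi\|_{C^0}.
\]
Hence $u^1_\delta\to\overline u^1$ uniformly, and parabolic Schauder theory also gives convergence in $C^{2,\alpha}_B$ if needed.

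\emph{Step 2 (one CSmodCN update is a continuous map).} For given data $(u^n,u^{n-1})$, \eqref{eq:CSmodCN} is the Euler--Lagrange equation of
\[
J(v)=\frac{1}{2\tau}\|v\|^2+\frac14\|\nabla v\|^2+\frac{1}{\eps^2}\int_\Omega G(v;u^n)\,dx-\ell(v),
\]
where $\partial_vG(v;u^n)=DF_c(v,u^n)$ and $\ell$ is the linear functional built from $u^n$, $\Delta u^n$, and the extrapolated $\tfrac32u^n-\tfrac12u^{n-1}$. The map $v\mapsto DF_c(v,u^n)$ is strictly increasing, since its derivative is $(2v^2+(v+u^n)^2)/4\ge0$. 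So $J$ is strictly convex, which gives existence and uniqueness. For stability, take two data sets with solutions $v_1,v_2$ and subtract the equations. The nonlinear difference splits as
\[
DF_c(v_1,a_1)-DF_c(v_2,a_2)=\beta\,(v_1-v_2)+\gamma\,(a_1-a_2),
\]
with $\beta\ge0$ and $\gamma$ bounded once a uniform $C^0$ bound on the iterates is known. The difference $z=v_1-v_2$ then satisfies the linear elliptic equation
\[
\Bigl(\tfrac1\tau+\tfrac{\beta}{\eps^2}\Bigr)z-\tfrac12\Delta z=g,
\]
where $g$ is controlled by $\|a_1-a_2\|_{C^2}+\|b_1-b_2\|_{C^0}$, with $b_i$ denoting the data $u^{n-1}$. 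The elliptic maximum principle, exactly as in Theorem~\ref{thm:stab1-monotone}, gives $\|z\|_{C^0}\le\tau\|g\|_{C^0}$, because the zero-order coefficient is at least $1/\tau$.

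\emph{Main obstacle.} Two issues remain. First, $g$ contains $\Delta(a_1-a_2)$, so the data must be controlled in $C^2$, not only in $C^0$. I would work in $C^{2,\alpha}_B$ throughout: Schauder estimates for the elliptic update (its coefficient $\beta$ is Hölder) give $C^{2,\alpha}$ continuity of $u^2_\delta$, which feeds the next step. Second, the uniform $L^\infty$ bound on the iterates has to be established first. I would obtain it by comparison with constants, using the cubic growth of $DF_c$ and the fact that $u^2_\delta$ and $u^3_\delta$ are near the bounded constants $\overline u^2$ and $\overline u^3$. Applying Step 2 at $n=1$ and then at $n=2$ closes the chain and the strict inequality follows.
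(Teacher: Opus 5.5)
Your proof is correct and has the same skeleton as the paper's. You take admissibility from \eqref{eq:common-perturbation-admissible}, prove continuity of the exact starter in $C^{2,\alpha}_B$, and prove continuity of each CSmodCN update from a zero-order coefficient bounded below by $1/\tau$; this is exactly $c(v,a)\ge 1/\tau$ in \eqref{eq:CSmodCN-linearized-operator}. You then apply this twice and keep the $\eta_{\rm CS}/2$ margin.

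The difference is the tool used for continuity of the update.
\begin{itemize}
\item The paper linearizes at the homogeneous reference and uses Schauder theory to make $D_v\mathcal F$ an isomorphism $C^{2,\alpha}_B\to C^{0,\alpha}$. It then applies the implicit function theorem, and uses strong monotonicity (uniqueness) to identify the local branch with the actual iterate. This needs no a priori bounds, but it is local and requires $C^1$ dependence of $\mathcal F$ on Hölder spaces.
\item You subtract the full nonlinear equations and split the discrete-gradient difference as $\beta(v_1-v_2)+\gamma(a_1-a_2)$ with $\beta\ge0$. You then apply the maximum principle and Schauder estimates to the difference. This costs an a priori $L^\infty$ bound, needed for $\gamma$ and the Hölder norm of $\beta$. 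In return it gives explicit Lipschitz-type estimates, such as $\|z\|_{C^0}\le\tau\|g\|_{C^0}$ and $e^{K\tau}|\delta|\,\|\psi\|_{C^0}$ for the starter, and these hold away from the reference too.
\end{itemize}

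Your remark that $\Delta a$ enters the update is exactly why the paper works in $C^{2,\alpha}_B$. There is a shortcut: reading $\Delta u^2_\delta$ off the $n=1$ update gives $\Delta u^2_\delta\to0$ in $C^0$ directly from $u^2_\delta\to\overline u^2$ and $\Delta u^1_\delta\to0$. So no Schauder step is needed for the discrete updates themselves.

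One small fix concerns the $L^\infty$ bound. Do not invoke the closeness of $u^2_\delta$ and $u^3_\delta$ to the constants, since that is what you are proving. The extremum argument alone suffices. The map $v\mapsto v/\tau+\eps^{-2}DF_c(v,a)$ is increasing with slope at least $1/\tau$. At a maximum (minimum) of $v$, this quantity is bounded above (below) by the right-hand side of the update.
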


\begin{proof}
The first two assertions in \eqref{eq:CSmodCN-perturbation-conclusion}
follow from \eqref{eq:common-perturbation-admissible}. Continuous dependence
of the semilinear parabolic flow gives
\[
    u^1_\delta\longrightarrow\overline u^1
    \quad\text{in }C_B^{2,\alpha}(\overline\Omega)
    \quad\text{as }\delta\to0.
\]
For one convex splitting update, let $v$ denote the new value and let
$a,b$ denote the two preceding values. Define
\[
\mathcal F(v;a,b)
=\frac{v-a}{\tau}-\frac12\Delta(v+a)
-\frac1{\eps^2}
\left(\frac32a-\frac12b-D F_c(v,a)\right).
\]
The Fr\'echet derivative with respect to $v$ is
\begin{equation}
    D_v\mathcal F(v;a,b)w
    =-\frac12\Delta w+c(v,a)w,
    \qquad
    c(v,a):=\frac1\tau
    +\frac{2v^2+(v+a)^2}{4\eps^2}\ge\frac1\tau.
\label{eq:CSmodCN-linearized-operator}
\end{equation}
Hence $D_v\mathcal F$ is an isomorphism from
$C_B^{2,\alpha}(\overline\Omega)$ to
$C^{0,\alpha}(\overline\Omega)$ by periodic or Neumann Schauder theory.
Moreover, $\partial_vD F_c(v,a)\ge0$, so the nonlinear update operator is
strongly monotone because of the term $\tau^{-1}v$; in particular, the
update is unique. The implicit-function theorem applied at each homogeneous
reference update therefore gives continuous dependence on $(a,b)$.
Applying it successively at $n=1$ and $n=2$ yields
\[
    u^2_\delta\longrightarrow\overline u^2,
    \qquad
    u^3_\delta\longrightarrow\overline u^3
    \quad\text{uniformly on }\overline\Omega.
\]
Therefore,
\[
    \left\|(u^3_\delta-u^2_\delta)
    -(\overline u^3-\overline u^2)\right\|_{L^\infty(\Omega)}
    \longrightarrow0.
\]
Choosing $\delta_0\le\delta_{\rm adm}$ so that the last norm is smaller
than $\eta_{\rm CS}/2$ proves
\eqref{eq:CSmodCN-perturbation-conclusion}.
\end{proof}

\subsection{Stabilized CN/AB scheme}
\label{sec:stab-CNAB-nonhomogeneous-persistence}

\begin{proposition}[Persistence of the stabilized CN/AB sign reversal under nonhomogeneous perturbations]
\label{prop:stab-CNAB-nonhomogeneous-persistence}
Let $\overline u_0\in(0,1)$, fix $S\ge0$, and choose a finite
$\tau>\tau_{*,\mathrm{stab}}(\overline u_0,\eps)$. Denote the strict
homogeneous decrement from Theorem~\ref{thm:stab-CNAB-nonmonotone} by
$\overline u^2-\overline u^1=-\eta_{\rm stab}<0.$
Set
\[
    u^0_\delta=\overline u_0+\delta\psi,
    \qquad
    u^1_\delta=\mathcal S_\tau u^0_\delta,
\]
and let $u^2_\delta$ be generated by \eqref{eq:stab-CNAB}. Then there
exists $\delta_0>0$ such that, whenever $0<|\delta|<\delta_0$,
\begin{equation}
    u^0_\delta\in\Aeps,
    \qquad
    \Delta u^0_\delta\not\equiv0,
    \qquad
    \max_{x\in\overline\Omega}
    \bigl(u^2_\delta(x)-u^1_\delta(x)\bigr)<0.
\label{eq:stab-CNAB-perturbation-conclusion}
\end{equation}
Hence the stabilized CN/AB sign reversal persists when diffusion is active.
\end{proposition}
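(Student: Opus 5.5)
The plan follows the CSmodCN persistence argument in Proposition~\ref{prop:CSmodCN-nonhomogeneous-persistence}. The stabilized CN/AB update is simpler, however, because it is \emph{linear} in $u^{n+1}$: no implicit-function theorem is needed, only a bounded linear resolvent.

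The first two assertions in \eqref{eq:stab-CNAB-perturbation-conclusion} are immediate from \eqref{eq:common-perturbation-admissible}, once we take $\delta_0\le\delta_{\rm adm}$.

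For the strict inequality, rewrite \eqref{eq:stab-CNAB} at $n=1$ as an increment equation for $w_\delta:=u^2_\delta-u^1_\delta$:
\[
\Bigl(\mu-\tfrac12\Delta\Bigr)w_\delta=\Delta u^1_\delta-\frac1{\eps^2}\Bigl(\tfrac32 f(u^1_\delta)-\tfrac12 f(u^0_\delta)\Bigr)=:G_\delta,
\qquad \mu:=\frac1\tau+\frac{S\tau}{\eps^2}>0 .
\]
Since $\mu>0$, the operator $\mu-\frac12\Delta$ with periodic or homogeneous Neumann conditions is invertible. Its inverse is bounded on $C^0(\overline\Omega)$ with norm at most $1/\mu$; this follows from the elliptic maximum principle, exactly as used in Theorem~\ref{thm:stab1-monotone}. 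Hence
\[
\|w_\delta-\overline w\|_{C^0}\le\mu^{-1}\|G_\delta-\overline G\|_{C^0},
\]
where the bars denote the homogeneous reference quantities. In particular $\overline w=\overline u^2-\overline u^1=-\eta_{\rm stab}$, since for constants the Laplacian drops out and the formula reduces to \eqref{eq:stab-CNAB-scalar-threshold}.

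It therefore suffices to show $G_\delta\to\overline G$ uniformly. This requires three convergences:
\begin{itemize}
\item $u^0_\delta\to\overline u_0$ in $C^0$, which holds trivially.
\item $u^1_\delta=\mathcal S_\tau u^0_\delta\to\overline u^1$ in $C^0$, so that $f(u^1_\delta)\to f(\overline u^1)$ uniformly because $f$ is locally Lipschitz and all values lie in $[0,1]$ by the maximum principle.
\item $\Delta u^1_\delta\to0$ in $C^0$.
\end{itemize}
The last point is the main obstacle, since it requires the exact-flow starter to depend continuously on the data in a $C^2$-type topology at the fixed finite time $\tau$. I would obtain it from well-posedness of the semilinear parabolic problem in $C_B^{2,\alpha}(\overline\Omega)$, the same fact used in Proposition~\ref{prop:CSmodCN-nonhomogeneous-persistence}. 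Concretely, write the Duhamel formula $u^1_\delta=e^{\tau\Delta}u^0_\delta+\eps^{-2}\int_0^\tau e^{(\tau-s)\Delta}(u-u^3)\,ds$. A Gronwall argument gives $C^0$ Lipschitz dependence, with constant depending on $\tau$ and the uniform bound $0\le u\le1$. Parabolic Schauder estimates then upgrade this to $C^{2,\alpha}$ dependence, since $\|u^0_\delta-\overline u_0\|_{C^{2,\alpha}}=|\delta|\,\|\psi\|_{C^{2,\alpha}}\to0$. Consequently $\|u^1_\delta-\overline u^1\|_{C^{2,\alpha}}\to0$, and in particular $\Delta u^1_\delta\to\Delta\overline u^1=0$ uniformly.

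Combining these, $\|w_\delta+\eta_{\rm stab}\|_{C^0}\to0$. Choose $\delta_0\le\delta_{\rm adm}$ so small that this norm is below $\eta_{\rm stab}/2$. Then $\max_{\overline\Omega}(u^2_\delta-u^1_\delta)\le-\eta_{\rm stab}/2<0$ for $0<|\delta|<\delta_0$, which gives \eqref{eq:stab-CNAB-perturbation-conclusion}.

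If $S$ is in the truncated-Lipschitz regime, the same argument applies verbatim with the truncated $f$, because the homogeneous values lie in $[-1,1]$ and uniform convergence keeps the perturbed values in the region where the truncation is inactive.
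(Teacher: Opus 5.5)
Your proposal is correct and follows essentially the same route as the paper: continuous dependence of the exact starter $u^1_\delta$ in $C_B^{2,\alpha}$ (so that $\Delta u^1_\delta\to0$ uniformly), a coercive linear elliptic solve for the second step, uniform convergence to the homogeneous decrement $-\eta_{\rm stab}$, and the $\eta_{\rm stab}/2$ margin with $\delta_0\le\delta_{\rm adm}$. The differences are cosmetic: you solve for the increment $w_\delta$ and use the maximum-principle bound $\|(\mu-\tfrac12\Delta)^{-1}\|_{C^0\to C^0}\le 1/\mu$ where the paper solves for $u^2_\delta$ via the Schauder isomorphism $C_B^{2,\alpha}\to C^{0,\alpha}$, and you give a Duhamel--Gronwall--Schauder justification for what the paper simply cites as parabolic continuous dependence.
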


\begin{proof}
By \eqref{eq:common-perturbation-admissible}, the perturbed datum is
admissible and has nonzero diffusion for sufficiently small nonzero
$\delta$. The exact starter satisfies $u^1_\delta\ge u^0_\delta$, and
parabolic continuous dependence gives
$u^1_\delta\to\overline u^1$ in
$C_B^{2,\alpha}(\overline\Omega)$.
Put
\[
    c_{\tau,S}:=\frac1\tau+\frac{S\tau}{\eps^2}>0,
    \qquad
    \mathcal L_{\tau,S}:=c_{\tau,S}I-\frac12\Delta.
\]
The update equation can be written as
\[
\mathcal L_{\tau,S}u^2_\delta
=c_{\tau,S}u^1_\delta+\frac12\Delta u^1_\delta
-\frac1{\eps^2}
\left(\frac32f(u^1_\delta)-\frac12f(u^0_\delta)\right).
\]
The operator $\mathcal L_{\tau,S}$ is an isomorphism from
$C_B^{2,\alpha}(\overline\Omega)$ to
$C^{0,\alpha}(\overline\Omega)$, and the right-hand side depends
continuously on $(u^0_\delta,u^1_\delta)$. Hence
$u^2_\delta\to\overline u^2$ uniformly. It follows that
\[
    \left\|(u^2_\delta-u^1_\delta)
    -(\overline u^2-\overline u^1)\right\|_{L^\infty(\Omega)}
    \longrightarrow0.
\]
Taking $\delta_0\le\delta_{\rm adm}$ so that this norm is below
$\eta_{\rm stab}/2$ proves
\eqref{eq:stab-CNAB-perturbation-conclusion}.
\end{proof}

\paragraph{An explicit periodic perturbation}
On $\Omega=(0,2\pi)$, take $\eps=\frac12$,
$\overline u_0=\frac12$, and $\psi(x)=\cos x$. Then
\[
    u^0_\delta(x)=\frac12+\delta\cos x,
    \qquad
    \Reps(u^0_\delta)
    =\frac32-6\delta^2\cos^2x-4\delta^3\cos^3x.
\]
For $0<\delta\le1/4$,
\[
    \Reps(u^0_\delta)(x)
    \ge \frac32-6\delta^2-4\delta^3
    \ge\frac{17}{16}>0,
\]
while $\Delta u^0_\delta=-\delta\cos x\not\equiv0$. For $S=5$ and
$\tau=0.5$, with the exact PDE starter, the homogeneous decrement is
approximately $-0.0371448$. The preceding continuity argument therefore
guarantees the same negative sign for every sufficiently small positive
$\delta$. This is a local perturbation result near $\delta=0$; without an
additional quantitative Lipschitz estimate, it does not assert that the
endpoint $\delta=1/4$ lies inside the perturbative neighborhood.

\subsection{IEQ scheme}
\label{sec:IEQ-nonhomogeneous-persistence}

\begin{proposition}[Persistence of the IEQ sign reversal under nonhomogeneous perturbations]
\label{prop:IEQ-nonhomogeneous-persistence}
Let $\overline u_0\in(0,1)$, and fix a finite
$\tau>\tau_{*,\mathrm{IEQ}}
(\overline u_0,\eps,C_{\mathrm{IEQ}})$. Let
$\overline u^2-\overline u^1=-\eta_{\rm IEQ}<0$ be the strict homogeneous
decrement from Theorem~\ref{thm:IEQ-large-step-loss}. Initialize
\[
    u^0_\delta=\overline u_0+\delta\psi,
    \qquad
    q^0_\delta=\sqrt{F(u^0_\delta)+C_{\rm IEQ}},
\]
and generate two IEQ steps from \eqref{eq:IEQ-u-scheme}. Then there exists
$\delta_0>0$ such that, whenever $0<|\delta|<\delta_0$,
\begin{equation}
    u^0_\delta\in\Aeps,
    \qquad
    \Delta u^0_\delta\not\equiv0,
    \qquad
    \max_{x\in\overline\Omega}
    \bigl(u^2_\delta(x)-u^1_\delta(x)\bigr)<0.
\label{eq:IEQ-perturbation-conclusion}
\end{equation}
Thus the IEQ loss of pointwise monotonicity also occurs for genuinely
nonhomogeneous admissible data.
\end{proposition}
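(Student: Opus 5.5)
The first two assertions in \eqref{eq:IEQ-perturbation-conclusion} come directly from \eqref{eq:common-perturbation-admissible}, provided we take $\delta_0\le\delta_{\rm adm}$. The rest of the argument is a continuity argument. We show that the map from $u^0_\delta$ to $(q^1_\delta,u^1_\delta,q^2_\delta,u^2_\delta)$ is continuous at $\delta=0$ in a norm at least as strong as $C^0(\overline\Omega)$. The strict homogeneous decrement $-\eta_{\rm IEQ}$ then transfers to $u^2_\delta-u^1_\delta$ once the uniform error drops below $\eta_{\rm IEQ}/2$. This is the same strategy as in Propositions~\ref{prop:CSmodCN-nonhomogeneous-persistence} and \ref{prop:stab-CNAB-nonhomogeneous-persistence}. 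The only new feature is that the IEQ auxiliary variable is a pointwise field, coupled nonlocally to $u$ through the elliptic solve.

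The key step is to rewrite one IEQ step as a linear elliptic problem whose coefficients depend continuously on the data. Substituting the second equation of \eqref{eq:IEQ-u-scheme} into the first gives
\[
\Bigl(\tfrac1\tau+\tfrac{1}{2\eps^2}H_n^2\Bigr)u^{n+1}-\Delta u^{n+1}
=\Bigl(\tfrac1\tau+\tfrac{1}{2\eps^2}H_n^2\Bigr)u^n-\tfrac1{\eps^2}q^nH_n,
\qquad
q^{n+1}=q^n+\tfrac12H_n(u^{n+1}-u^n).
\]
Here $Q_n=\sqrt{F(u^n)+C_{\rm IEQ}}\ge\sqrt{C_{\rm IEQ}}>0$. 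Hence $H_n=f(u^n)/Q_n$ is a smooth function of $u^n$, and so is $H_n^2$. The zero-order coefficient $c_n:=\tau^{-1}+H_n^2/(2\eps^2)$ therefore satisfies $c_n\ge\tau^{-1}>0$. By periodic or Neumann Schauder theory, $c_n-\Delta$ is an isomorphism from $C_B^{2,\alpha}(\overline\Omega)$ onto $C^{0,\alpha}(\overline\Omega)$.

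The inverse $(c-\Delta)^{-1}$ depends continuously, in operator norm, on the coefficient $c\in C^{0,\alpha}$ with $c\ge\tau^{-1}$. This follows from a Neumann series for the perturbation $(c-\overline c)$, or equivalently from the resolvent identity. The right-hand side is continuous in $(u^n,q^n)\in C^{0,\alpha}\times C^{0,\alpha}$, because composition with smooth functions is continuous on Hölder spaces. Consequently the step map $(u^n,q^n)\mapsto(u^{n+1},q^{n+1})$ is continuous from $C^{0,\alpha}\times C^{0,\alpha}$ into $C_B^{2,\alpha}\times C^{0,\alpha}$.

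We apply this twice. First, $u^0_\delta\to\overline u_0$ in $C_B^{2,\alpha}$, and $q^0_\delta=\sqrt{F(u^0_\delta)+C_{\rm IEQ}}\to\overline q^0=Q_0$ in $C^{0,\alpha}$, so one step gives $(u^1_\delta,q^1_\delta)\to(\overline u^1,\overline q^1)$. A second step then gives $(u^2_\delta,q^2_\delta)\to(\overline u^2,\overline q^2)$. Here the barred quantities are exactly the constants produced by the homogeneous construction of Theorem~\ref{thm:IEQ-large-step-loss}, since homogeneous data are preserved and the solution of each step is unique.

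It follows that $\|(u^2_\delta-u^1_\delta)-(\overline u^2-\overline u^1)\|_{L^\infty}\to0$. Choosing $\delta_0$ so that this error is below $\eta_{\rm IEQ}/2$ gives $\max(u^2_\delta-u^1_\delta)\le-\eta_{\rm IEQ}/2<0$. The main obstacle is the continuity of the coefficient-dependent inverse together with the fact that $q^n$ is only Hölder, not $C^2$. I will handle this by tracking $q$ only in $C^{0,\alpha}$, which suffices because $q$ enters only through zero-order terms. The uniform bound on $(c-\Delta)^{-1}$ comes from $c\ge\tau^{-1}$ via the maximum principle and Schauder estimates.
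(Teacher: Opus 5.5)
Your proposal is correct and follows essentially the same route as the paper. Both arguments eliminate $q^{n+1}$ to get $(a_n-\Delta)u^{n+1}=a_nu^n-\eps^{-2}H_nq^n$, where $a_n=\tau^{-1}+H_n^2/(2\eps^2)\ge\tau^{-1}$ because $Q_n\ge\sqrt{C_{\rm IEQ}}$; they then use the periodic/Neumann Schauder isomorphism to obtain a continuous two-step solution map and transfer the strict homogeneous margin $-\eta_{\rm IEQ}$ by uniform convergence. Your remarks on the continuity of $(c-\Delta)^{-1}$ in the coefficient and on tracking $q^n$ only in $C^{0,\alpha}$ just make explicit details the paper leaves implicit.
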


\begin{proof}
The admissibility and nonzero-diffusion assertions follow from
\eqref{eq:common-perturbation-admissible}; hence the exact Allen--Cahn
solution from the same datum is pointwise nondecreasing.

For the IEQ iterates, define pointwise
\[
    Q_n=\sqrt{F(u^n)+C_{\rm IEQ}},
    \qquad H_n=\frac{f(u^n)}{Q_n},
    \qquad
    a_n(x):=\frac1\tau+\frac{H_n(x)^2}{2\eps^2}.
\]
Since $F\ge0$ and $C_{\rm IEQ}>0$, one has
$Q_n\ge\sqrt{C_{\rm IEQ}}$, so the Nemytskii maps
$u^n\mapsto Q_n$ and $u^n\mapsto H_n$ are smooth near the homogeneous
orbit. Eliminating $q^{n+1}$ from \eqref{eq:IEQ-u-scheme} gives
\begin{equation}
    \bigl(a_n(x)I-\Delta\bigr)u^{n+1}
    =a_n(x)u^n-\frac1{\eps^2}H_nq^n,
    \qquad
    q^{n+1}=q^n+\frac12H_n(u^{n+1}-u^n).
\label{eq:IEQ-eliminated-perturbation}
\end{equation}
The coefficient satisfies $a_n(x)\ge1/\tau$. Thus the elliptic operator
in \eqref{eq:IEQ-eliminated-perturbation} is uniformly coercive and, by
periodic or Neumann Schauder theory, is an isomorphism from
$C_B^{2,\alpha}(\overline\Omega)$ to
$C^{0,\alpha}(\overline\Omega)$. Starting from
$(u^0_\delta,q^0_\delta)\to(\overline u^0,\overline q^0)$ and applying
this continuous solution map successively at $n=0$ and $n=1$ gives
\[
    (u^1_\delta,q^1_\delta)\longrightarrow
    (\overline u^1,\overline q^1),
    \qquad
    (u^2_\delta,q^2_\delta)\longrightarrow
    (\overline u^2,\overline q^2)
\]
in particular uniformly on $\overline\Omega$. The strict margin
$-\eta_{\rm IEQ}$ therefore persists after decreasing
$\delta_0\le\delta_{\rm adm}$, which proves
\eqref{eq:IEQ-perturbation-conclusion}.
\end{proof}

\subsection{SAV scheme}
\label{sec:SAV-nonhomogeneous-persistence}

\begin{proposition}[Persistence of the SAV sign reversal under nonhomogeneous perturbations]
\label{prop:SAV-nonhomogeneous-persistence}
Let $\overline u_0\in(0,1)$, and fix a finite
$\tau>\tau_{*,\mathrm{SAV}}
(\overline u_0,\eps,C_0,|\Omega|)$. Let
$\overline u^2-\overline u^1=-\eta_{\rm SAV}<0$ denote the strict
homogeneous decrement from Theorem~\ref{thm:SAV-large-step-loss}. Initialize
\[
    u^0_\delta=\overline u_0+\delta\psi,
    \qquad
    r^0_\delta=
    \left(\int_\Omega\frac1{\eps^2}F(u^0_\delta)\,dx+C_0\right)^{1/2},
\]
and generate two SAV steps from \eqref{eq:SAV-u}. Then there exists
$\delta_0>0$ such that, whenever $0<|\delta|<\delta_0$,
\begin{equation}
    u^0_\delta\in\Aeps,
    \qquad
    \Delta u^0_\delta\not\equiv0,
    \qquad
    \max_{x\in\overline\Omega}
    \bigl(u^2_\delta(x)-u^1_\delta(x)\bigr)<0.
\label{eq:SAV-perturbation-conclusion}
\end{equation}
Hence the SAV sign reversal is stable under small nonhomogeneous
admissible perturbations.
\end{proposition}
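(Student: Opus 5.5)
The proof follows the template of Proposition~\ref{prop:IEQ-nonhomogeneous-persistence}; the one new feature is that the SAV auxiliary variable is a \emph{nonlocal} scalar. The first two assertions in \eqref{eq:SAV-perturbation-conclusion} follow directly from \eqref{eq:common-perturbation-admissible}. By Theorem~\ref{thm:exact-monotone-growth}, the exact solution from $u^0_\delta$ is then pointwise nondecreasing. It remains to show that the two SAV steps depend continuously on $\delta$ in $C^0(\overline\Omega)$ near the homogeneous orbit $(\overline u^n,\overline r^n)$.

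\textbf{Reduction of one SAV step.} For one step, write $g_n:=\eps^{-2}f(u^n)$ and $S_n=(\int_\Omega\eps^{-2}F(u^n)\,dx+C_0)^{1/2}\ge\sqrt{C_0}>0$. Then $u^n\mapsto g_n\in C^{0,\alpha}$ and $u^n\mapsto S_n\in\mathbb R$ are smooth maps on $C_B^{2,\alpha}(\overline\Omega)$. Substituting $r^{n+1}=r^n+\frac{1}{2S_n}(g_n,u^{n+1}-u^n)$ into the phase equation gives
\[
(\tau^{-1}I-\Delta)u^{n+1}+\frac{(g_n,u^{n+1})}{2S_n^2}\,g_n
=\tau^{-1}u^n-\frac{r^n}{S_n}g_n+\frac{(g_n,u^n)}{2S_n^2}\,g_n .
\]
This is the resolvent $L:=\tau^{-1}I-\Delta$, an isomorphism $C_B^{2,\alpha}\to C^{0,\alpha}$ by Schauder theory, plus a rank-one perturbation.

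\textbf{Invertibility of the rank-one perturbation.} I would invert this operator explicitly by the Sherman--Morrison formula. Put $\gamma_n:=(g_n,L^{-1}g_n)$. Because $L$ is self-adjoint and positive on $L^2$, we have $\gamma_n\ge0$, so
\[
1+\frac{\gamma_n}{2S_n^2}\ge1 .
\]
Hence the perturbed operator is invertible, with inverse given by a formula that depends continuously on $(g_n,S_n)$. This is the coercivity of the SAV phase update: equivalently, the bilinear form $\tau^{-1}\|w\|^2+\|\nabla w\|^2+(2S_n^2)^{-1}(g_n,w)^2$ is coercive. Consequently, $(u^n,r^n)\mapsto(u^{n+1},r^{n+1})$ is continuous from $C_B^{2,\alpha}\times\mathbb R$ into $C_B^{2,\alpha}\times\mathbb R$. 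The $r$-update is continuous because it is an $L^2$ pairing of continuous quantities.

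\textbf{Iterating and concluding.} The initial data satisfy $u^0_\delta\to\overline u_0$ in $C_B^{2,\alpha}$ and $r^0_\delta\to\overline r^0=S_0$, since $\int F(u^0_\delta)\to|\Omega|F(\overline u_0)$. Applying the step map at $n=0$ and then at $n=1$ gives $(u^2_\delta,r^2_\delta)\to(\overline u^2,\overline r^2)$. Together with the convergence of $u^1_\delta$, this yields
\[
\|(u^2_\delta-u^1_\delta)-(\overline u^2-\overline u^1)\|_{L^\infty}\to0 .
\]
I then choose $\delta_0\le\delta_{\rm adm}$ so that this norm is below $\eta_{\rm SAV}/2$, which proves the strict inequality.

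\textbf{Main obstacle.} The only delicate point is the nonlocal rank-one term. Continuity of the inner product $(g_n,\cdot)$, the uniform lower bound $S_n\ge\sqrt{C_0}$, and the sign $\gamma_n\ge0$ must all be checked, so that the denominator in the Sherman--Morrison formula never degenerates. Once that is verified, the rest is routine and mirrors the IEQ case.
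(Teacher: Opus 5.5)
Your proposal is correct and follows essentially the same route as the paper. You eliminate $r^{n+1}$ to get the resolvent $\tau^{-1}I-\Delta$ plus a rank-one term, use its coercivity to obtain continuous dependence of $(u^{n+1},r^{n+1})$ on $(u^n,r^n)$, apply the step map twice, and keep the strict margin $-\eta_{\rm SAV}$ by uniform convergence. The only difference is that you make the invertibility explicit through Sherman--Morrison, with denominator $1+\gamma_n/(2S_n^2)\ge1$, whereas the paper invokes Lax--Milgram followed by Schauder regularity.
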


\begin{proof}
For sufficiently small nonzero $\delta$, the exact PDE solution from
$u^0_\delta$ is pointwise nondecreasing by
\eqref{eq:common-perturbation-admissible} and
Theorem~\ref{thm:exact-monotone-growth}.
Define
\[
    b_n:=\frac{1}{\eps^2S_n}f(u^n).
\]
Since $F\ge0$ and $C_0>0$, $S_n\ge\sqrt{C_0}$ and $b_n$ depends
continuously on $u^n$. Eliminating $r^{n+1}$ from \eqref{eq:SAV-u} gives
\begin{equation}
\begin{aligned}
    \left(\frac1\tau I-\Delta\right)u^{n+1}
    +\frac12(b_n,u^{n+1})b_n
    ={}&\frac1\tau u^n-r^n b_n
       +\frac12(b_n,u^n)b_n,\\
    r^{n+1}={}&r^n+\frac12(b_n,u^{n+1}-u^n),
\end{aligned}
\label{eq:SAV-eliminated-perturbation}
\end{equation}
where $(\cdot,\cdot)$ is the $L^2(\Omega)$ inner product. The bilinear
form of the phase equation satisfies
\[
    \frac1\tau\|v\|_{L^2}^2
    +\|\nabla v\|_{L^2}^2
    +\frac12(b_n,v)^2
    \ge\frac1\tau\|v\|_{L^2}^2.
\]
It is therefore uniformly coercive. Lax--Milgram, followed by periodic or
Neumann Schauder regularity, shows that the phase update is uniquely
solvable in $C_B^{2,\alpha}(\overline\Omega)$ and depends continuously on
$(u^n,r^n)$. The scalar update in
\eqref{eq:SAV-eliminated-perturbation} is continuous as well. Starting from
$(u^0_\delta,r^0_\delta)\to(\overline u^0,\overline r^0)$ and applying the
update twice yields
\[
    u^1_\delta\longrightarrow\overline u^1,
    \qquad
    u^2_\delta\longrightarrow\overline u^2
    \quad\text{uniformly on }\overline\Omega.
\]
The strict margin $-\eta_{\rm SAV}$ is therefore preserved after choosing
$\delta_0\le\delta_{\rm adm}$ sufficiently small, which proves
\eqref{eq:SAV-perturbation-conclusion}.
\end{proof}

\section{Numerical experiments}
\label{sec:num}

The numerical experiments are organized in three stages. First, we use
two positive one-dimensional cosine profiles from the introductory example
to visualize the initial velocity
$
    u_t(x,0)=\Reps(u_0)(x)
$
of the continuous Allen--Cahn equation. One profile has a residual that is
positive at every spatial point, whereas the other has a sign-changing
residual. This initial-time diagnostic shows that the range condition
$0<u_0<1$ alone does not determine the pointwise direction of the PDE flow.

Second, using only the residual-positive profile, we compare the different
time discretizations in one spatial dimension. These tests examine whether
the fully implicit Euler method reproduces the admissible pointwise structure,
whether energy-stable mechanisms can distort the physical time scale without
reversing the increment, and whether modified-energy stability or
maximum-bound preservation can coexist with an incorrect pointwise time
direction.

Third, we repeat the scheme comparisons for a genuinely two-dimensional
nonhomogeneous datum on $(0,2\pi)^2$. Two spatial grids, $128^2$ and $256^2$,
are employed for every method in order to verify that the observed increment
patterns persist when both spatial directions are active and are not artifacts
of a particular spatial resolution. Together, the one- and two-dimensional
experiments distinguish preservation of the pointwise time direction from
energy stability, maximum-bound preservation, and modified-energy
dissipation.

\subsection{Initial-velocity diagnostic for the continuous problem}
\label{subsec:continuous-initial-velocity}

We first isolate a property of the continuous Allen--Cahn equation that is
essential for interpreting the later numerical results. On
$\Omega=(0,2\pi)$, with periodic boundary conditions and $\eps=0.5$, consider
the two positive cosine profiles
\begin{equation}
    u_0^+(x)=\frac12+\frac14\cos x,
    \qquad
    u_0^{\rm mix}(x)=\frac12+\frac14\cos(3x),
    \qquad x\in(0,2\pi).
    \label{eq:numerical-initial-data-pair}
\end{equation}
Both profiles satisfy
$
    \frac14\leq u_0^+(x),\,u_0^{\rm mix}(x)\leq\frac34<1.
$
They nevertheless have different initial PDE velocities. For the first
profile,
\[
    \Reps(u_0^+)(x)
    =-\frac14\cos x+4\left(\frac12+\frac14\cos x
      -\left(\frac12+\frac14\cos x\right)^3\right)
    \geq \frac{17}{16}>0.
\]
Hence $u_0^+\in\Aeps$. In contrast,
\begin{align*}
    \Reps(u_0^{\rm mix})(x)
    &=-\frac94\cos(3x)+4\left(\frac12+\frac14\cos(3x)
      -\left(\frac12+\frac14\cos(3x)\right)^3\right) \\
    &=\frac32-2\cos(3x)-\frac38\cos^2(3x)-\frac1{16}\cos^3(3x),
\end{align*}
so that
$
    \Reps(u_0^{\rm mix})(0)=-\frac{15}{16}<0,
    \,
    \Reps(u_0^{\rm mix})\left(\frac{\pi}{3}\right)=\frac{51}{16}>0.
$
Thus $u_0^{\rm mix}\notin\Aeps$: although it remains strictly between $0$
and $1$, its initial velocity is negative near the spatial crests and positive
near the troughs.

Figure~\ref{fig:initial-velocity-sign-contrast-final} plots this initial-time
contrast directly. The left panel shows that both initial profiles stay in
the same positive range. The right panel shows the corresponding residuals,
with the zero line marking the sign of the initial velocity. The datum
$u_0^+$ has a positive residual everywhere, whereas $u_0^{\rm mix}$ has both
positive and negative residual regions. Thus the PDE direction is selected by
the full residual rather than by the scalar range condition alone.

\begin{figure}[!htbp]
    \centering
    \includegraphics[width=0.96\textwidth]{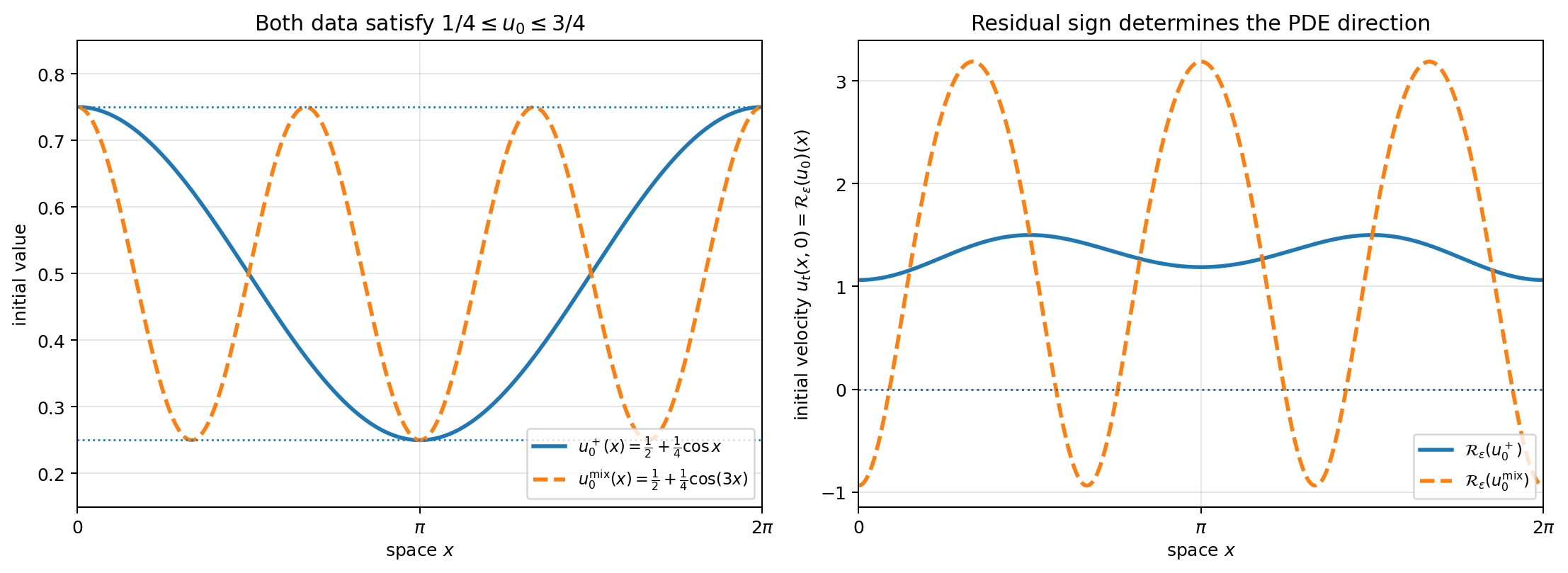}
    \caption{Initial-velocity sign contrast for two positive cosine profiles with $\eps=0.5$. Left: both $u_0^+$ and $u_0^{\rm mix}$ remain in the interval $[1/4,3/4]$. Right: the residual $\Reps(u_0^+)$ is strictly positive, whereas $\Reps(u_0^{\rm mix})$ changes sign; equivalently, the initial PDE velocity $u_t(x,0)$ has both positive and negative regions for the mixed-residual datum.}
    \label{fig:initial-velocity-sign-contrast-final}
\end{figure}

This comparison concerns only the continuous equation at the initial time and
does not involve a temporal discretization. Its role is to identify the
residual-positive datum used in the subsequent numerical comparisons. In
particular, a negative discrete increment observed later cannot be attributed
to an initially mixed pointwise direction of the continuous flow.

\subsection{One-dimensional numerical experiments}
\label{subsec:one-dimensional-experiments}

\subsubsection{Common setting and diagnostics}

All one-dimensional scheme-comparison experiments are performed on
$\Omega=(0,2\pi)$ with periodic boundary conditions. The spatial Laplacian is
approximated by the standard second-order periodic finite-difference operator
on a uniform grid with
$
    N=160, \,
    h=\frac{2\pi}{N},\,
    \eps=0.5.
$
For all these experiments, we use only the residual-positive datum
\begin{equation}
    u_0(x)=u_0^+(x)=\frac12+\frac14\cos x,
    \qquad x\in(0,2\pi),
    \label{eq:numerical-initial-data-final}
\end{equation}
introduced in Subsection~\ref{subsec:continuous-initial-velocity}. This is the
$a=1/2$, $b=1/4$, $k=1$ member of the introductory family, and it satisfies
$
    \Reps(u_0)(x)\geq\frac{17}{16}>0.
$
Consequently, the corresponding continuous solution is pointwise
nondecreasing, and any negative increment observed below represents a
numerical reversal of the admissible time direction rather than a consequence
of a sign-changing initial residual.

For a computed solution \(\{u_j^n\}\), we use the diagnostics
\[
    m_\Delta:=\min_{n,j}\bigl(u_j^{n+1}-u_j^n\bigr),
    \qquad
    U_{\min}:=\min_{n,j}u_j^n,
    \qquad
    U_{\max}:=\max_{n,j}u_j^n.
\]
A positive value of \(m_\Delta\) indicates pointwise monotone growth, whereas
\(m_\Delta<0\) records a local reversal of the time direction. For two-step
schemes, the first value is generated by a highly resolved fully implicit
Euler starter over the first nominal step. This guarantees that the starting
pair is already in the positive monotone direction, so any later negative
increment is produced by the tested two-step update itself. The negative-branch
case is omitted because the Allen--Cahn equation is odd under \(u\mapsto-u\),
and therefore gives the corresponding monotone-decay tests by symmetry.

\subsubsection{Reference computation: fully implicit Euler}

We first give a fully implicit Euler computation as a monotone reference. This method is not used here as another energy-stable comparison scheme; it serves as the baseline because the theory predicts pointwise monotone growth in the unique-solvability regime. We take
$\tau=0.03, T=3 .$
Since \(\tau<\eps^2=0.25\), the computation lies in the convexity regime used in the monotonicity theorem. The discrete initial residual is positive,
$
    \min_x R_h^0(x)\approx 1.062532>0,
$
and the computed increments satisfy
$
    m_\Delta
    =\min_{n,j}\left(u_j^{n+1}-u_j^n\right)
    \approx 1.54\times10^{-10}>0 .
$
Thus the fully implicit Euler surface is pointwise nondecreasing in time. The corresponding reference computation is displayed in Figure~\ref{fig:ie-monotone-surface-final}; the surface has no visible temporal fold or local decrease and will be used as the monotone benchmark below.

\begin{figure}[!htbp]
    \centering
    \includegraphics[width=0.450\textwidth]{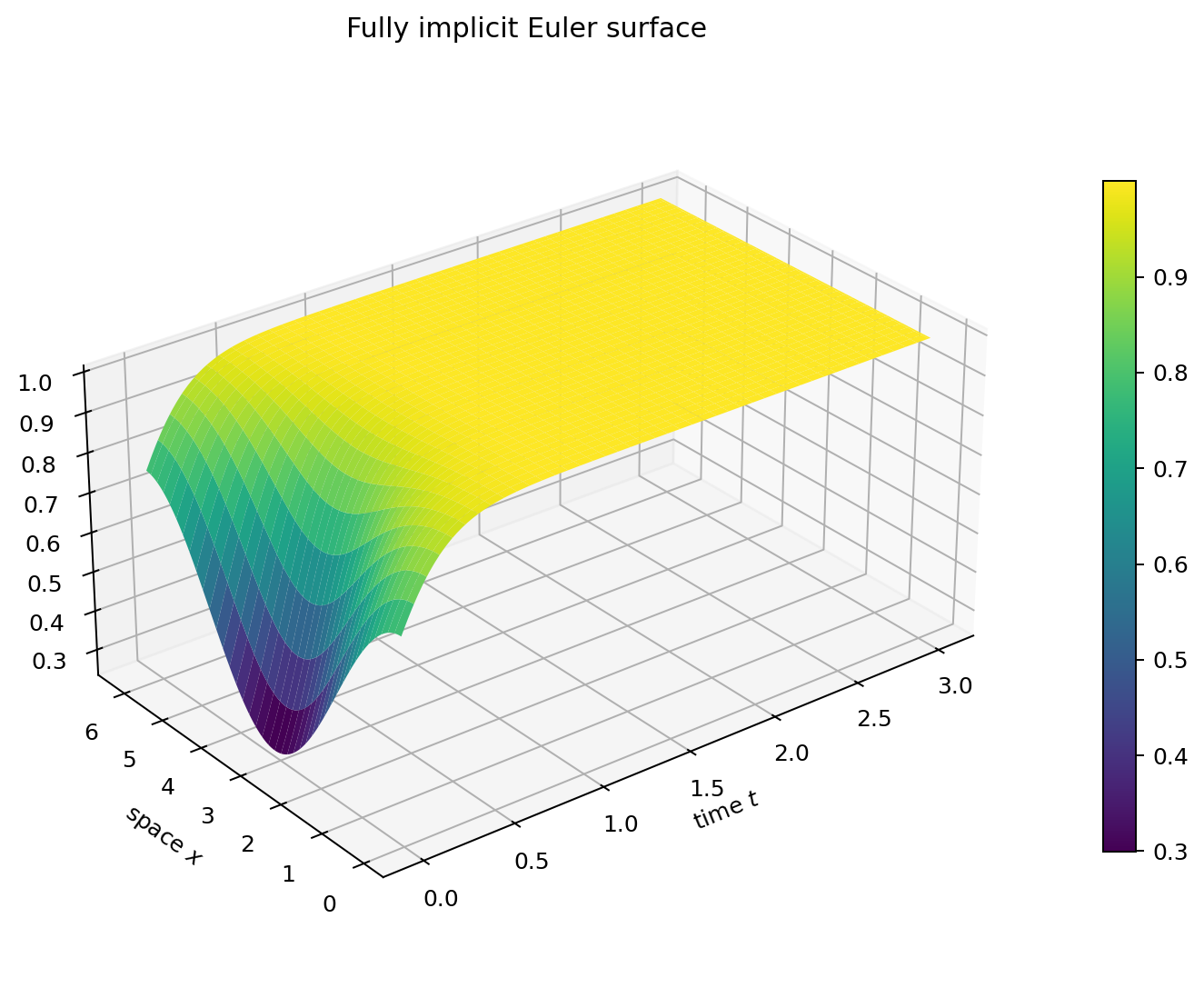}
    \caption{Fully implicit Euler reference surface for the admissible datum \eqref{eq:numerical-initial-data-final}. The computation uses \(\eps=0.5\) and \(\tau=0.03\), and the surface is pointwise nondecreasing in time.}
    \label{fig:ie-monotone-surface-final}
\end{figure}

\subsubsection{Monotone but delayed dynamics: CSS1 and first-order stabilization}

The first group of experiments concerns schemes that retain the pointwise direction in the tested stable regime, but evolve on a reduced effective time scale for large steps. This is the numerical counterpart of the delay mechanisms proved for CSS1 and for the strongly stabilized first-order semi-implicit method.

\paragraph{CSS1}
We compare CSS1 at
$T=3, \tau_{\rm s}=0.05,\tau_{\rm l}=0.5 .$
The effective-time compression factors are
\[
    \alpha_{\rm s}=\frac{\eps^2}{\eps^2+\tau_{\rm s}}=0.833333,
    \qquad
    \alpha_{\rm l}=\frac{\eps^2}{\eps^2+\tau_{\rm l}}=0.333333.
\]
Both runs remain pointwise monotone:
\[
    m_{\Delta,{\rm s}}
    \approx 1.27\times10^{-8}>0,
    \qquad
    m_{\Delta,{\rm l}}
    \approx 3.47\times10^{-3}>0.
\]
The large-step defect is therefore not sign reversal, but visible delay caused by the compressed effective time. This distinction between preservation of the pointwise direction and distortion of the transient time scale is directly visible in the two surfaces in Figure~\ref{fig:css1-step-compare-final}.

\begin{figure}[!htbp]
    \centering
    \begin{minipage}[b]{0.45\textwidth}
        \centering
        \includegraphics[width=0.96\textwidth]{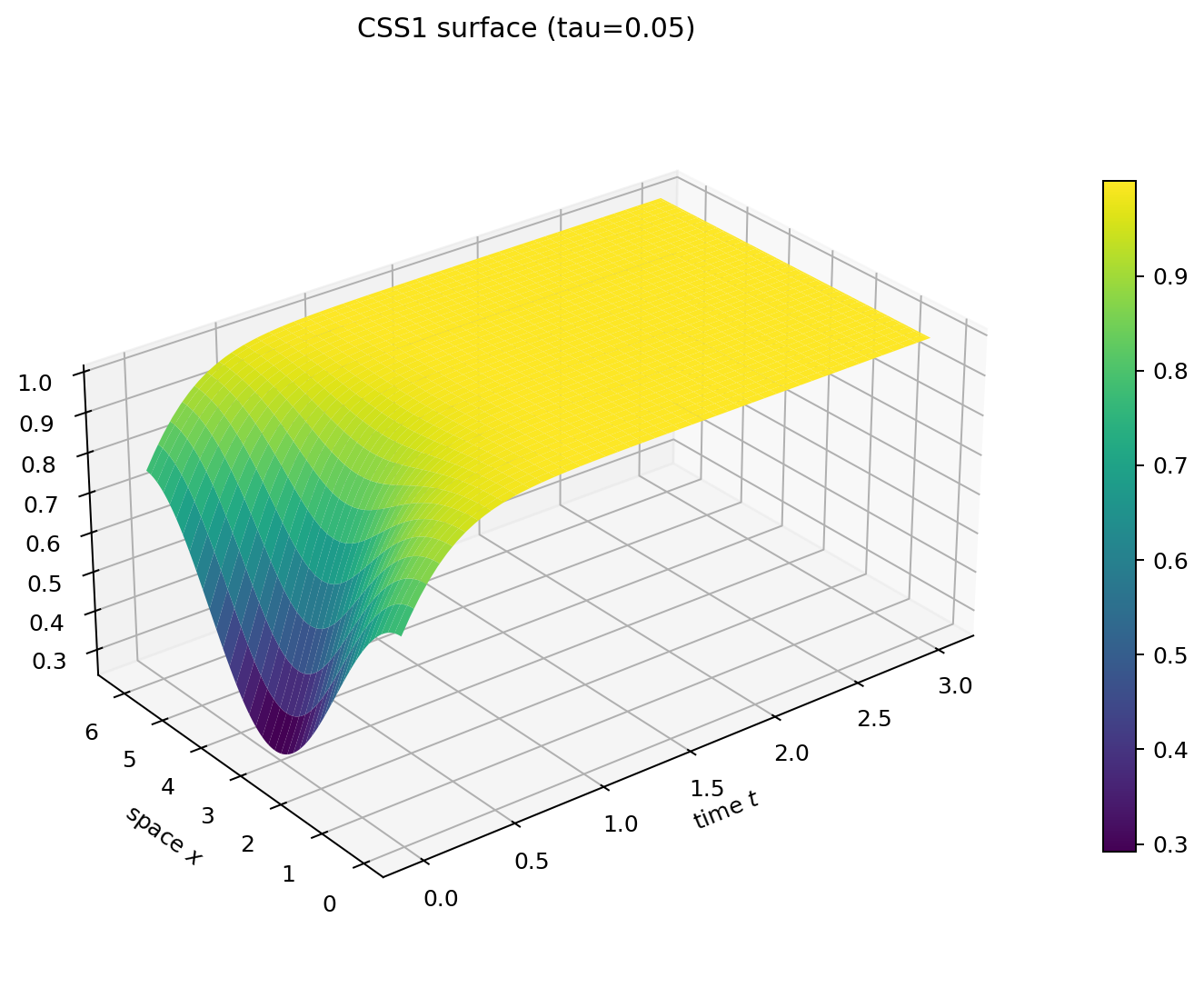}
    \end{minipage}\hfill
    \begin{minipage}[b]{0.45\textwidth}
        \centering
        \includegraphics[width=0.96\textwidth]{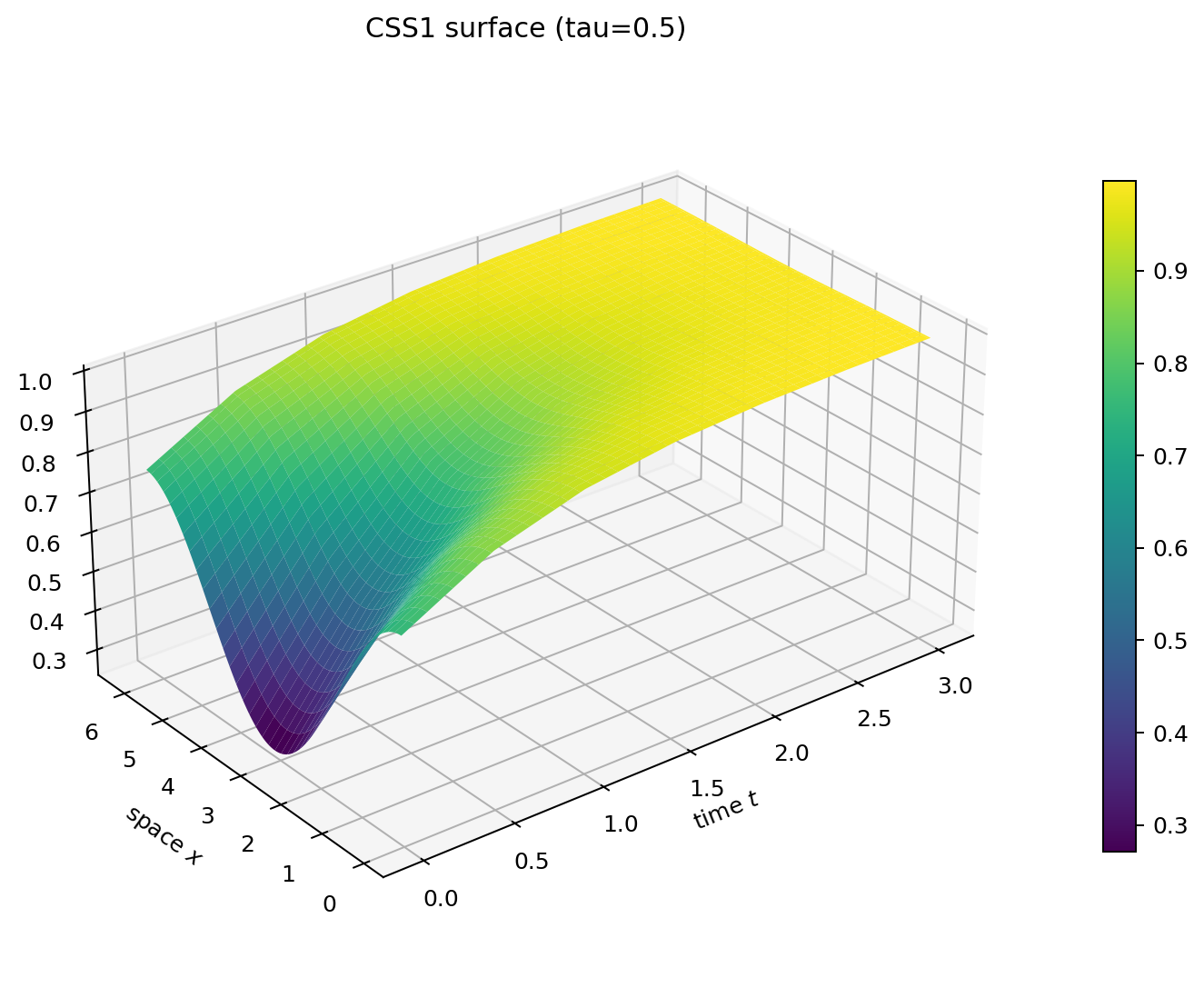}
    \end{minipage}
    \caption{CSS1 comparison with the common admissible datum. Left: small-step run with \(\tau=0.05\), which remains monotone and follows the nominal transient dynamics more closely. Right: large-step run with \(\tau=0.5\), which is still monotone but is substantially delayed.}
    \label{fig:css1-step-compare-final}
\end{figure}

To quantify this delay, we compare the point trace at \(x=0\) for the large-step CSS1 run with a small-step fully implicit Euler reference at the nominal time \(t\) and at the delayed time \(t_{\rm eff}=\alpha_{\rm l}t\).

\begin{figure}[!htbp]
    \centering
    \includegraphics[width=0.45\textwidth]{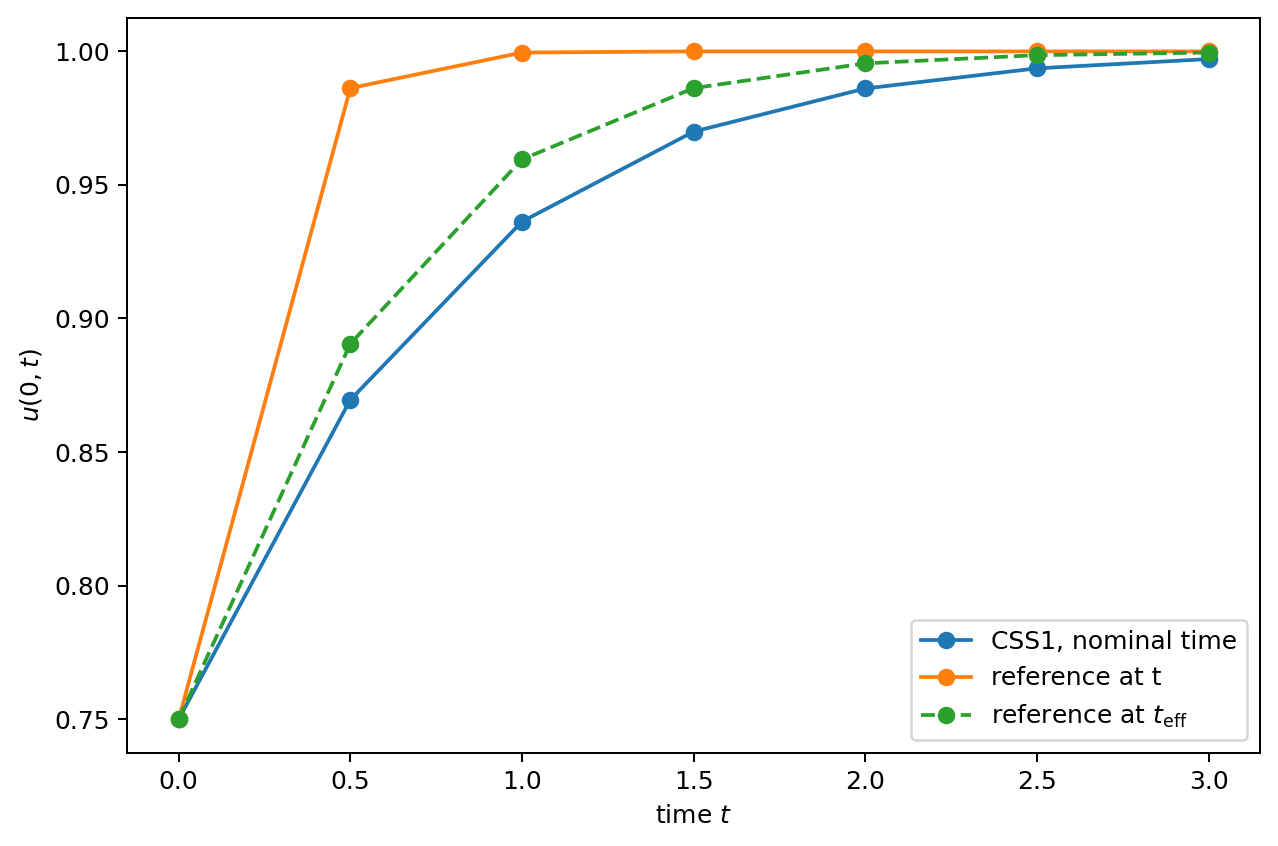}
    \caption{Point trace at \(x=0\) for the large-step CSS1 run. The CSS1 trajectory is closer to the delayed-time reference than to the nominal-time reference for this parameter set.}
    \label{fig:css1-point-delay-final}
\end{figure}

At \(T=3\), the \(L^2\)-error against the nominal-time reference is
$
    \|u_{\rm CSS1}(T)-u_{\rm ref}(T)\|_2
    \approx 1.412\times10^{-2},
$
whereas the error against the delayed-time reference is
$
    \|u_{\rm CSS1}(T)-u_{\rm ref}(T_{\rm eff})\|_2
    \approx 1.207\times10^{-2}.
$
This confirms that CSS1 evolves closer to the delayed effective trajectory than to the nominal physical-time trajectory, as is also seen from the point traces in Figure~\ref{fig:css1-point-delay-final}.

\paragraph{First-order stabilized semi-implicit scheme}
We next test the strongly stabilized first-order semi-implicit method with
$S=32,
    T=2,
    \tau_{\rm s}=0.05,
    \tau_{\rm l}=0.5 .$
The damping factors are
\[
    \beta_{\rm s}=\frac1{1+S\tau_{\rm s}}=0.384615,
    \qquad
    \beta_{\rm l}=\frac1{1+S\tau_{\rm l}}=0.058824.
\]
Both computations preserve the pointwise direction:
$
    m_{\Delta,{\rm s}}
    \approx 2.28\times10^{-4}>0,
    \,
    m_{\Delta,{\rm l}}
    \approx 2.46\times10^{-2}>0 .
$
The large-step surface is nevertheless strongly damped, in agreement with the theoretical large-step limit for this method. The contrast between the small-step and large-step transients is evident in Figure~\ref{fig:stab-step-compare-final}.

\begin{figure}[!htbp]
    \centering
    \begin{minipage}[b]{0.48\textwidth}
        \centering
        \includegraphics[width=0.96\textwidth]{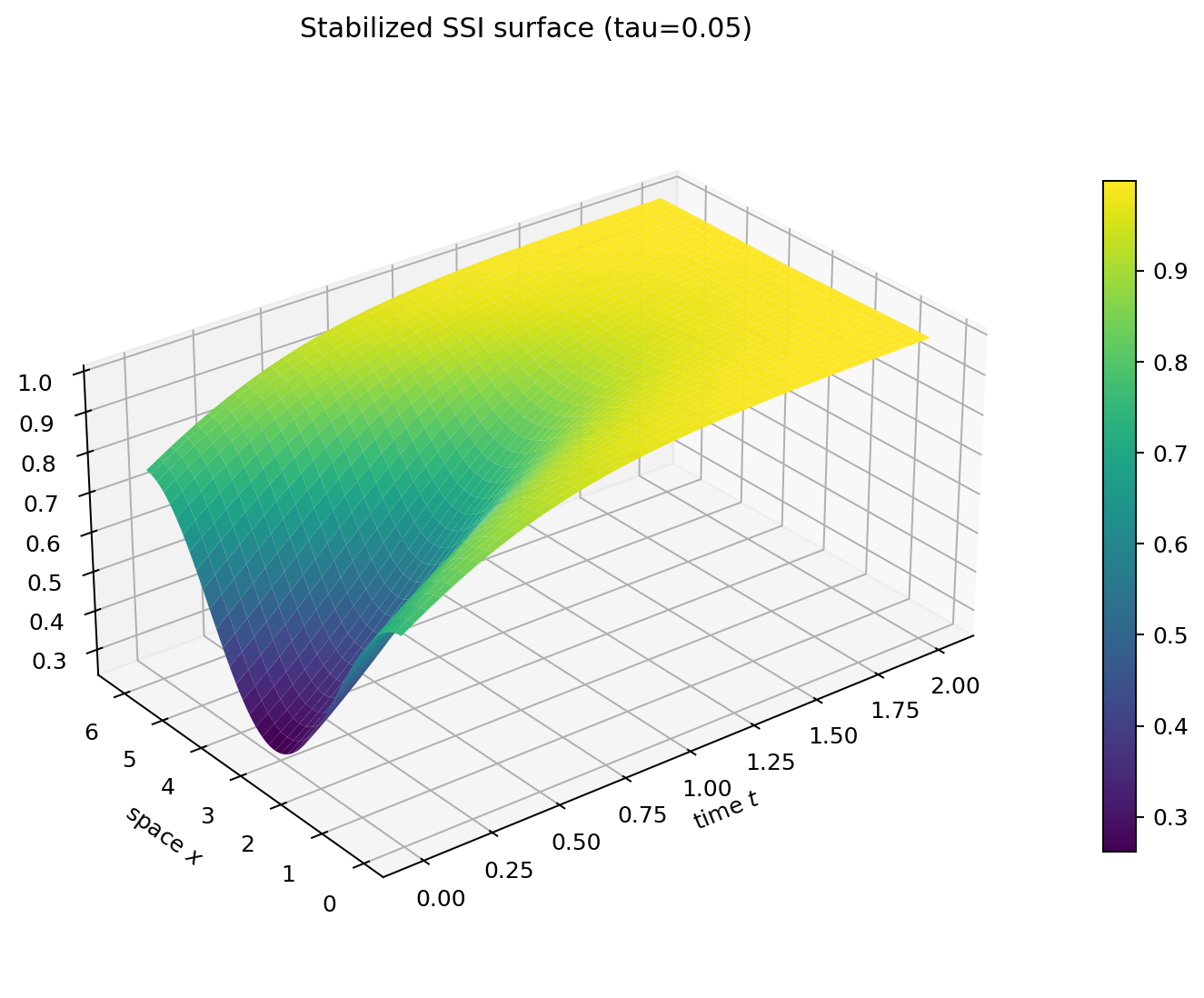}
    \end{minipage}\hfill
    \begin{minipage}[b]{0.48\textwidth}
        \centering
        \includegraphics[width=0.96\textwidth]{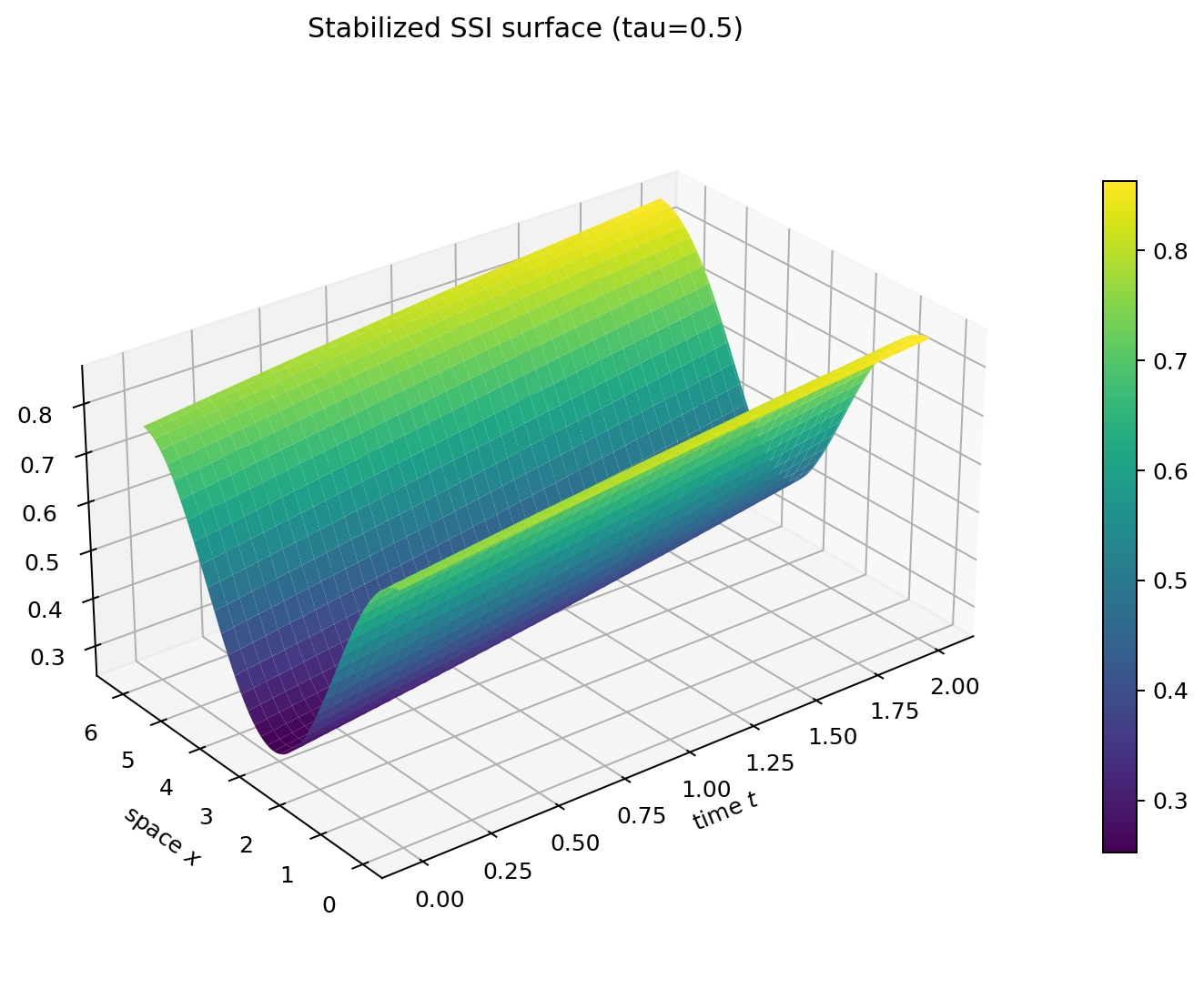}
    \end{minipage}
    \caption{First-order stabilized semi-implicit comparison with \(S=32\). Left: small-step run with \(\tau=0.05\). Right: large-step run with \(\tau=0.5\), which remains monotone but exhibits a much stronger artificial delay.}
    \label{fig:stab-step-compare-final}
\end{figure}

The delay is also visible in the point trace at \(x=0\). We compare the large-step stabilized trace with the reference trajectory at the nominal time and at the damped time \(t_{\rm eff}=\beta_{\rm l}t\).

\begin{figure}[!htbp]
    \centering
    \includegraphics[width=0.45\textwidth]{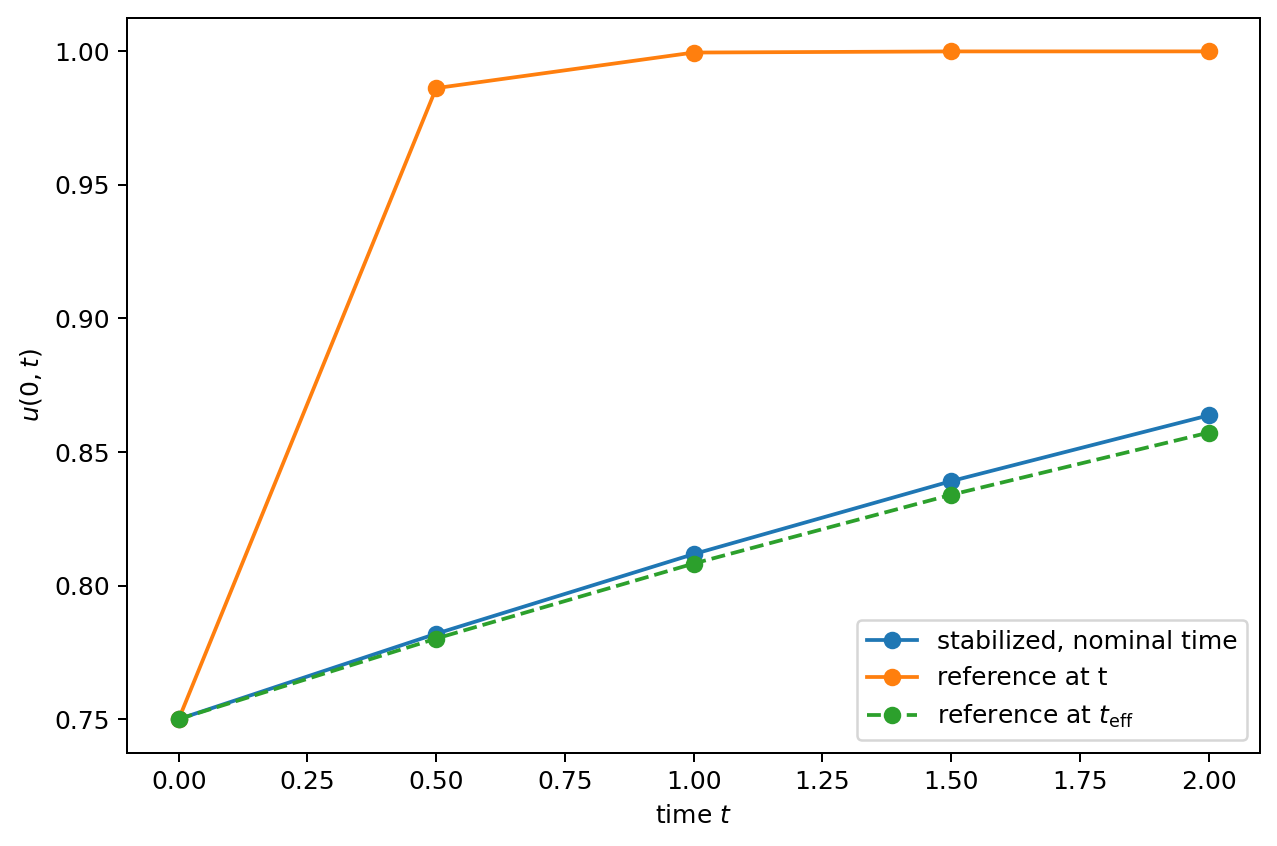}
    \caption{Point trace at \(x=0\) for the first-order stabilized scheme with a large time step. The numerical trajectory is consistent with the damped effective-time dynamics rather than with the solution evaluated at the nominal physical time.}
    \label{fig:stab-point-delay-final}
\end{figure}

At \(T=2\), the \(L^2\)-error against the nominal-time reference is
$
    \|u_{\rm stab}(T)-u_{\rm ref}(T)\|_2
    \approx 9.510\times10^{-1},
$
whereas the error against the damped-time reference is
$
    \|u_{\rm stab}(T)-u_{\rm ref}(T_{\rm eff})\|_2
    \approx 1.373\times10^{-2}.
$
Thus sufficiently strong first-order stabilization yields unconditional energy stability and preserves the tested monotone direction, but the price is a pronounced distortion of the physical time scale; the effective-time interpretation is quantitatively illustrated by Figure~\ref{fig:stab-point-delay-final}.

\subsubsection{Two-step schemes: loss of pointwise monotonicity}

The next two experiments concern two-step schemes. In both cases the first value is generated by a highly resolved fully implicit Euler starter, so the starting pair has the correct monotone direction. The observed negative increment is therefore a property of the two-step update.

\paragraph{Second-order convex splitting}
For the CS-modCN scheme \eqref{eq:CSmodCN}, we take
$\tau_{\rm s}=0.01,
    T_{\rm s}=2,$
for the small-step run and
$\tau_{\rm l}=1,
    T_{\rm l}=3,$
for the large-step run. The small-step computation remains pointwise increasing:
$
    m_{\Delta,{\rm s}}
    \approx 1.55\times10^{-8}>0 .
$
For the large step, the numerical solution overshoots above the stable level and then decreases:
$
    U_{\max}\approx 1.188961,
    \,
    m_{\Delta,{\rm l}}
    \approx -1.87\times10^{-1}<0 .
$
This agrees with the theory: modified-energy stability of the second-order convex splitting scheme does not prevent loss of pointwise monotonicity. The transition from monotone growth to overshoot followed by decrease is displayed explicitly in Figure~\ref{fig:csmodcn-step-compare-final}.

\begin{figure}[!htbp]
    \centering
    \begin{minipage}[b]{0.45\textwidth}
        \centering
        \includegraphics[width=0.96\textwidth]{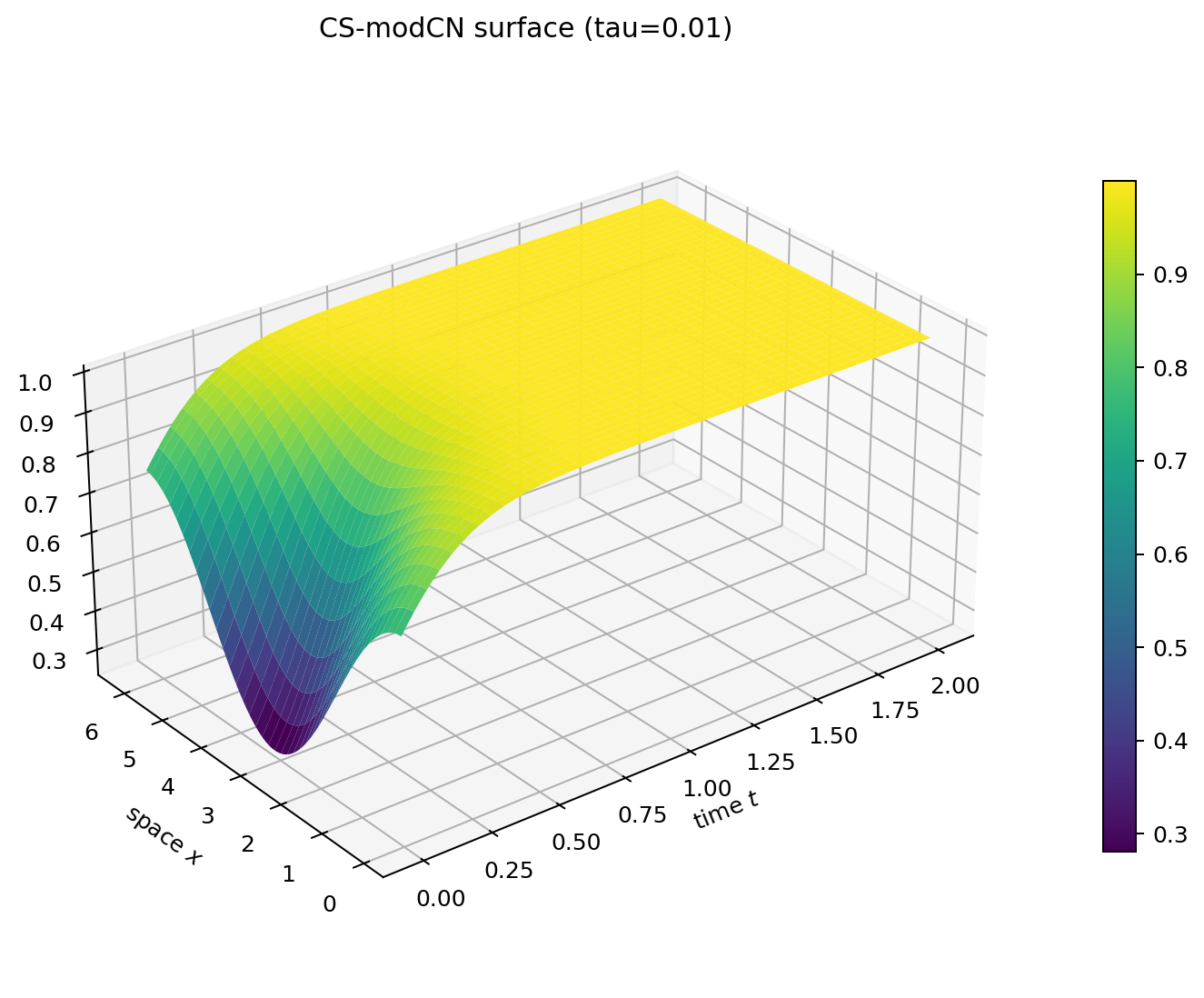}
    \end{minipage}\hfill
    \begin{minipage}[b]{0.45\textwidth}
        \centering
        \includegraphics[width=0.96\textwidth]{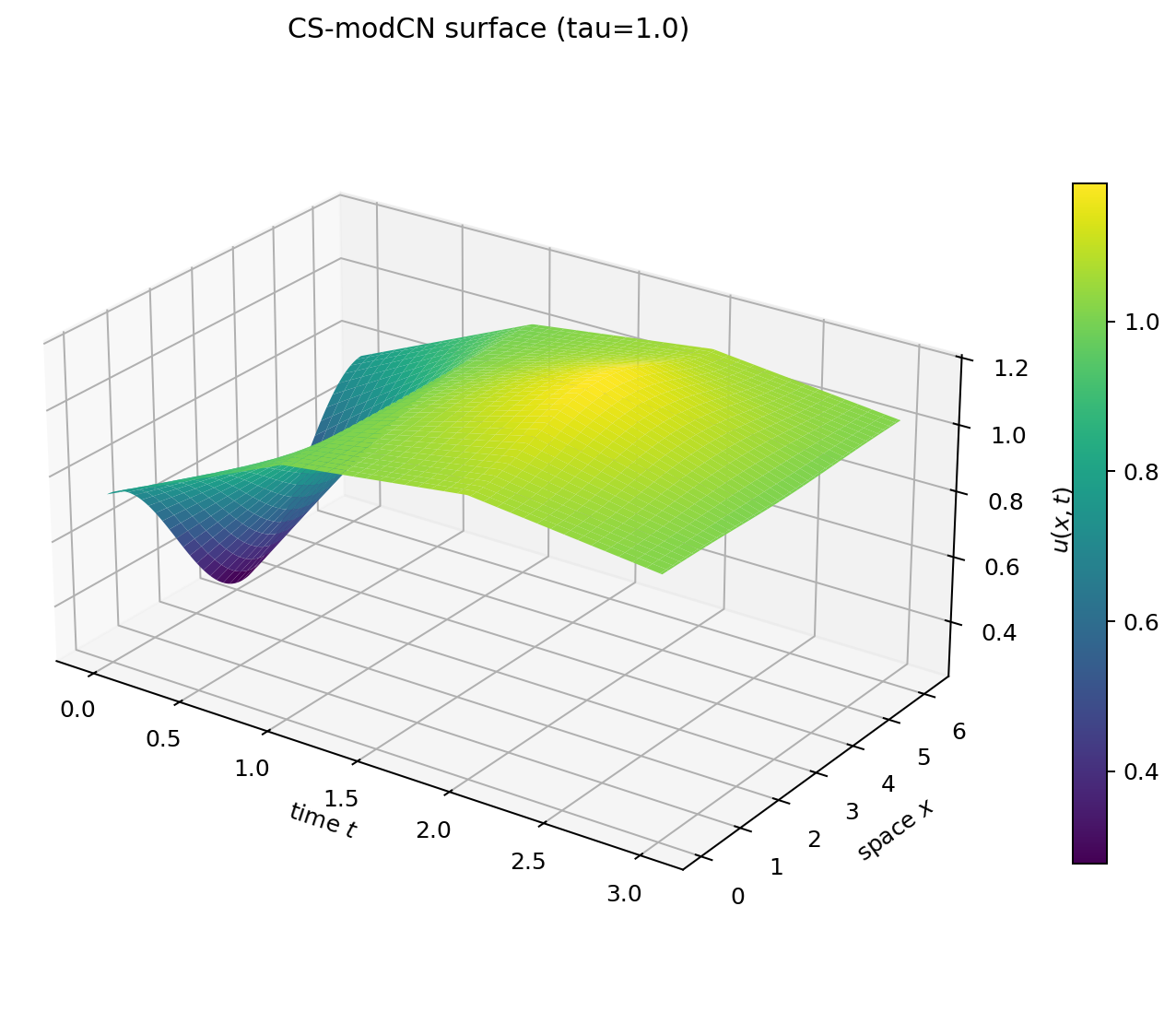}
    \end{minipage}
    \caption{Second-order convex splitting comparison. Left: small-step run with \(\tau=0.01\) on \([0,2]\), for which the solution is pointwise increasing. Right: large-step run with \(\tau=1\) on \([0,3]\), where overshoot and subsequent decrease make the loss of pointwise monotonicity visible.}
    \label{fig:csmodcn-step-compare-final}
\end{figure}

\paragraph{Second-order stabilized CN/AB}
The stabilized CN/AB scheme \eqref{eq:stab-CNAB} is the most representative numerical test for distinguishing maximum-bound preservation from pointwise monotonicity. We use
$S=5,\tau_{\rm s}=0.05,T_{\rm s}=2,$
and $\tau_{\rm l}=0.5,T_{\rm l}=5.$

The smaller-step surface remains monotone: $m_{\Delta,{\rm s}} \approx 3.28\times10^{-7}>0.$
For the larger step, the solution remains inside the maximum-bound interval $[0,1]$, i.e.,
$2.50\times10^{-1}\leq u_{j,{\rm l}}^n\leq 9.995\times10^{-1},$
but the pointwise increment becomes negative:
$m_{\Delta,{\rm l}}
    \approx -4.37\times10^{-2}<0 .$
Hence the failure is not a maximum-bound violation. It is a local reversal of the monotone time direction inside the maximum-bound range. On this interval, \(|f'|\le2\); because \(\eps=0.5\), the standard modified-energy condition is \(S\ge L^2/(4\eps^2)=4\), which is satisfied by the choice \(S=5\).

\begin{figure}[!htbp]
    \centering
    \begin{minipage}[b]{0.45\textwidth}
        \centering
        \includegraphics[width=0.96\textwidth]{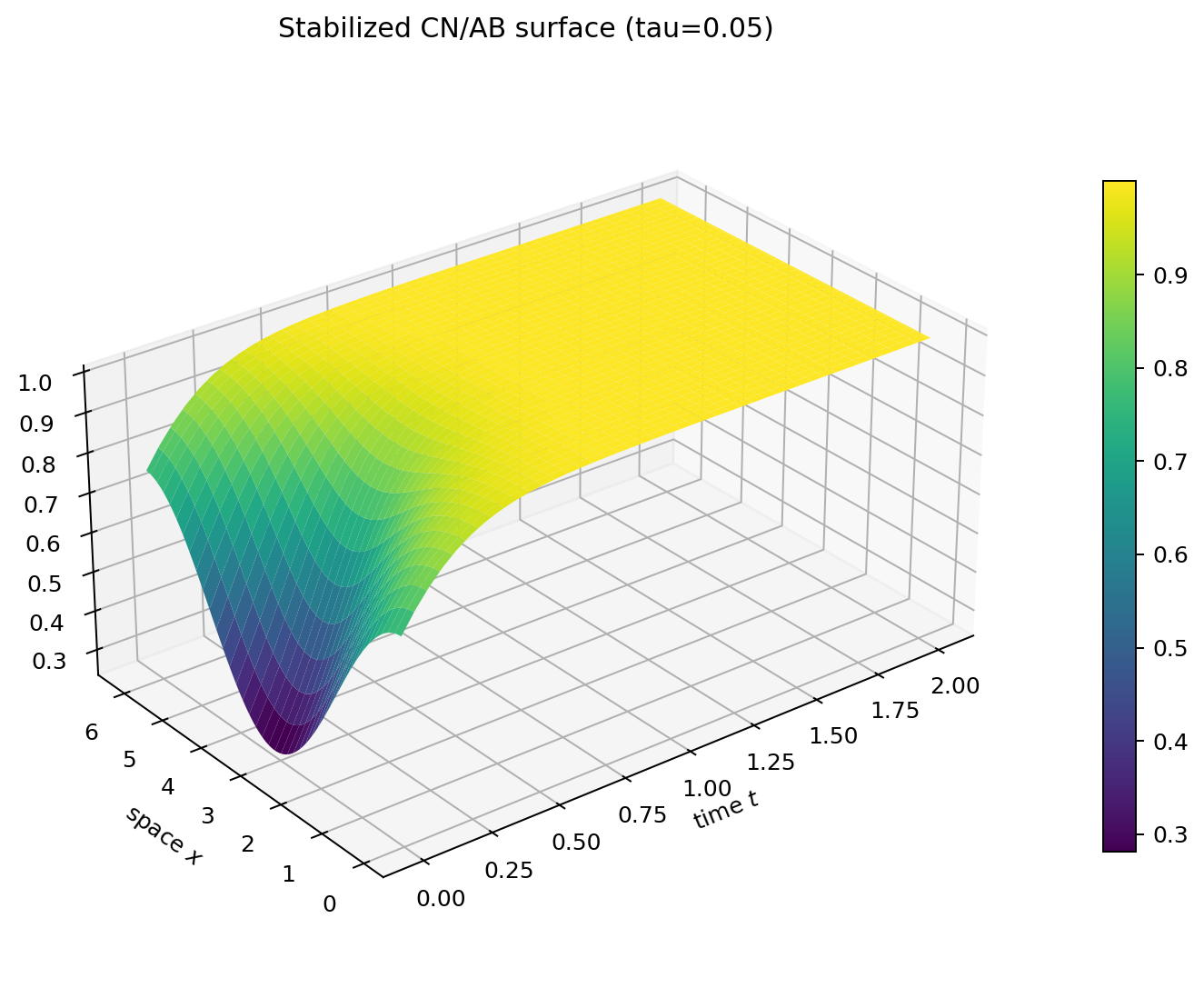}
    \end{minipage}\hfill
    \begin{minipage}[b]{0.45\textwidth}
        \centering
        \includegraphics[width=0.96\textwidth]{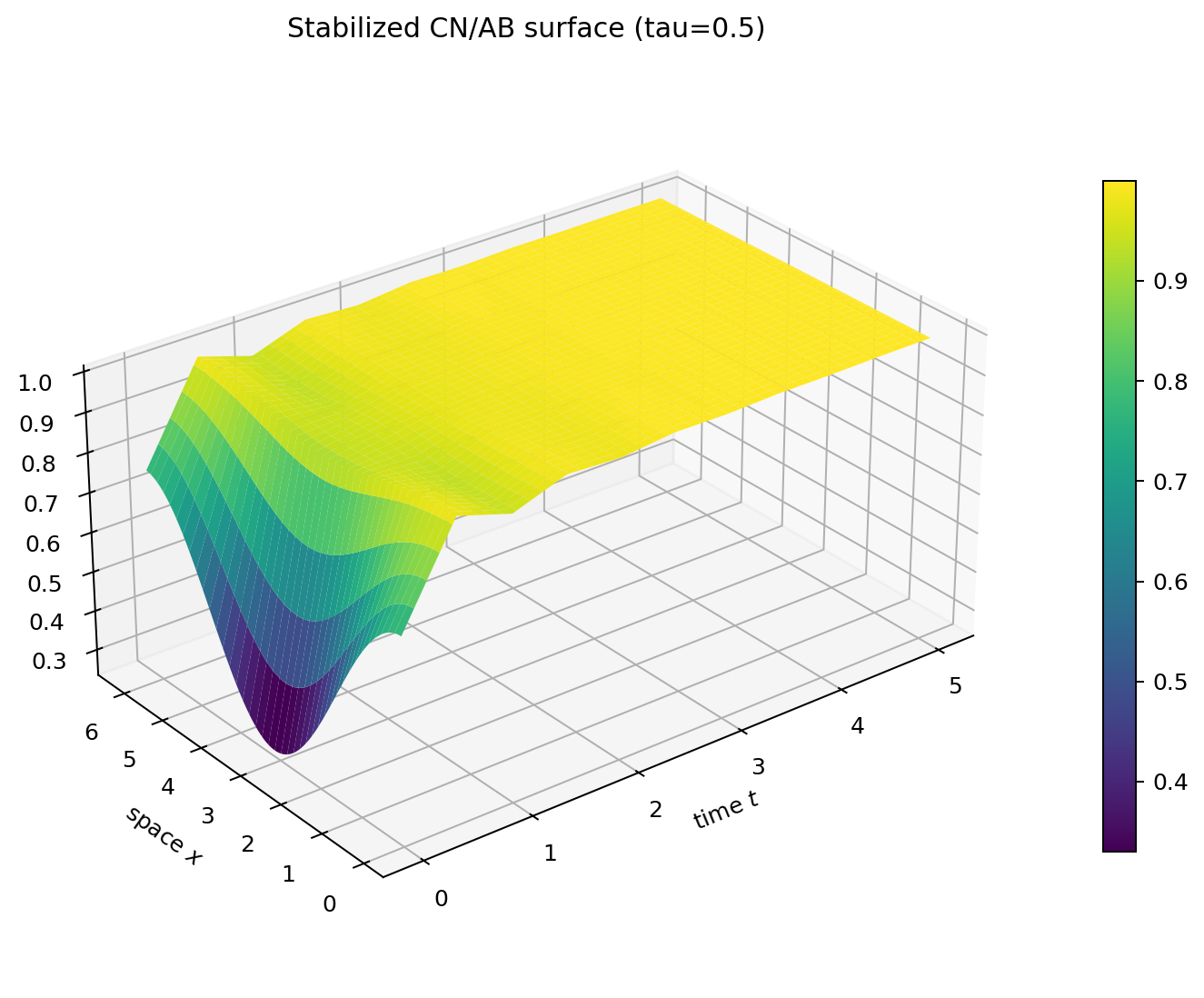}
    \end{minipage}
    \caption{Stabilized CN/AB comparison. Left: small-step run with \(\tau=0.05\), for which the numerical solution remains monotone. Right: large-step run with \(\tau=0.5\), for which the solution remains inside the maximum-bound interval but loses pointwise monotonicity after a monotone initialization.}
    \label{fig:stab-cnab-step-compare-final}
\end{figure}

This experiment is sharper than the convex-splitting test: the stabilized CN/AB solution does not leave the maximum-bound range, but it still loses the local time direction. The small- and large-step regimes in Figure~\ref{fig:stab-cnab-step-compare-final} make this distinction visually transparent. Thus maximum-bound preservation, even when accompanied by modified-energy stability, is not a substitute for pointwise monotonicity.

\subsubsection{Auxiliary-variable schemes: IEQ and SAV}

We finally test two standard auxiliary-variable strategies. These methods are designed to dissipate a modified energy, but the experiments below show that this modified-energy stability does not determine the pointwise direction of the original phase variable for large steps.

\paragraph{IEQ}
For IEQ we take
$ C_{\rm IEQ}=1,
    \tau_{\rm s}=0.1,
    T_{\rm s}=2,$
and
$\tau_{\rm l}=1,
    T_{\rm l}=8 .$
The smaller-step run is pointwise nondecreasing:
$ m_{\Delta,{\rm s}}
    \approx 2.74\times10^{-12}>0 .$
For the larger step, the numerical solution develops a negative pointwise increment:
$m_{\Delta,{\rm l}}
    \approx -1.08\times10^{0}<0 .$  The associated change from monotone growth to a wrong-signed increment is visible in Figure~\ref{fig:ieq-step-compare-final}.

\begin{figure}[!htbp]
    \centering
    \begin{minipage}[b]{0.45\textwidth}
        \centering
        \includegraphics[width=0.96\textwidth]{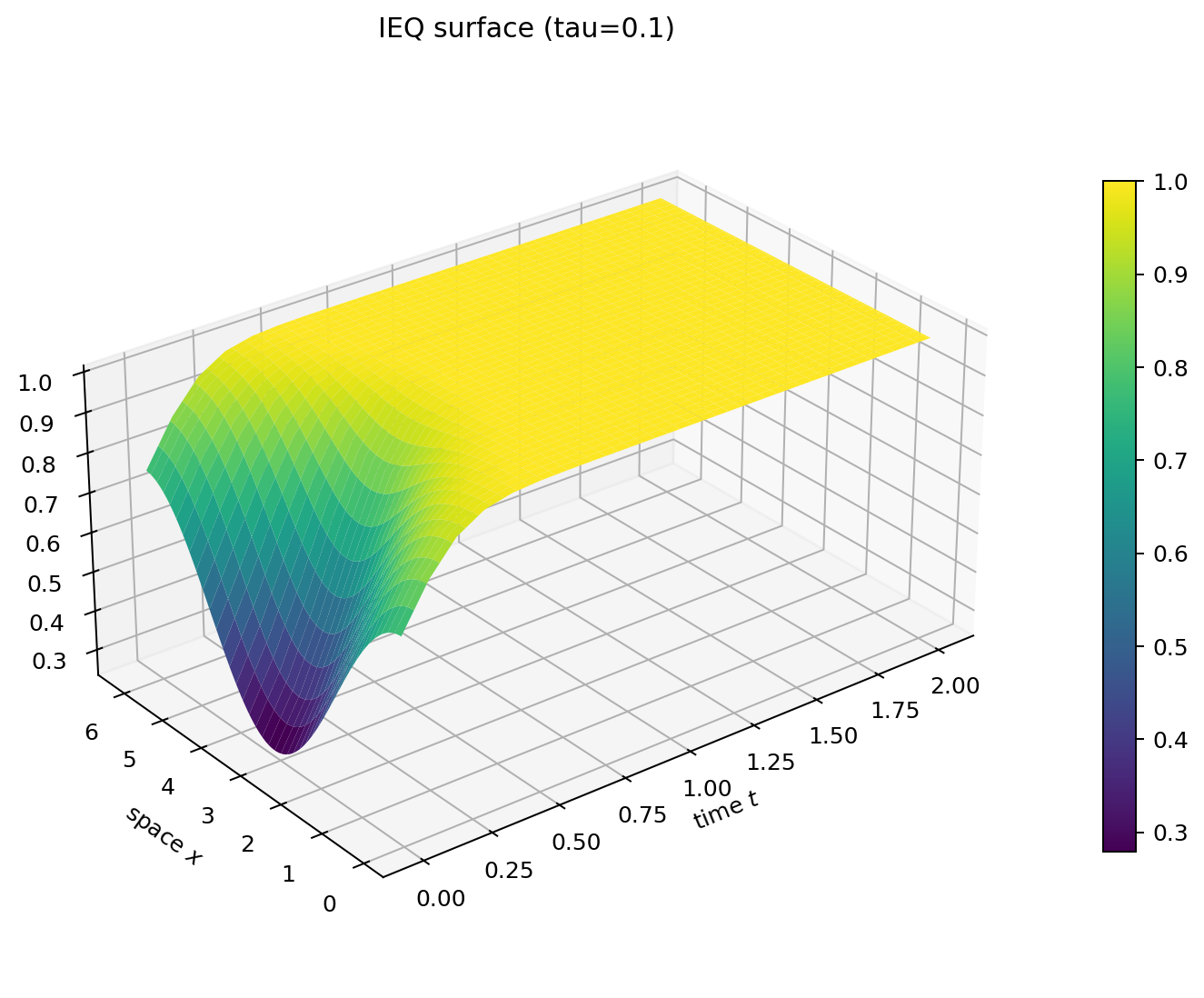}
    \end{minipage}\hfill
    \begin{minipage}[b]{0.45\textwidth}
        \centering
        \includegraphics[width=0.96\textwidth]{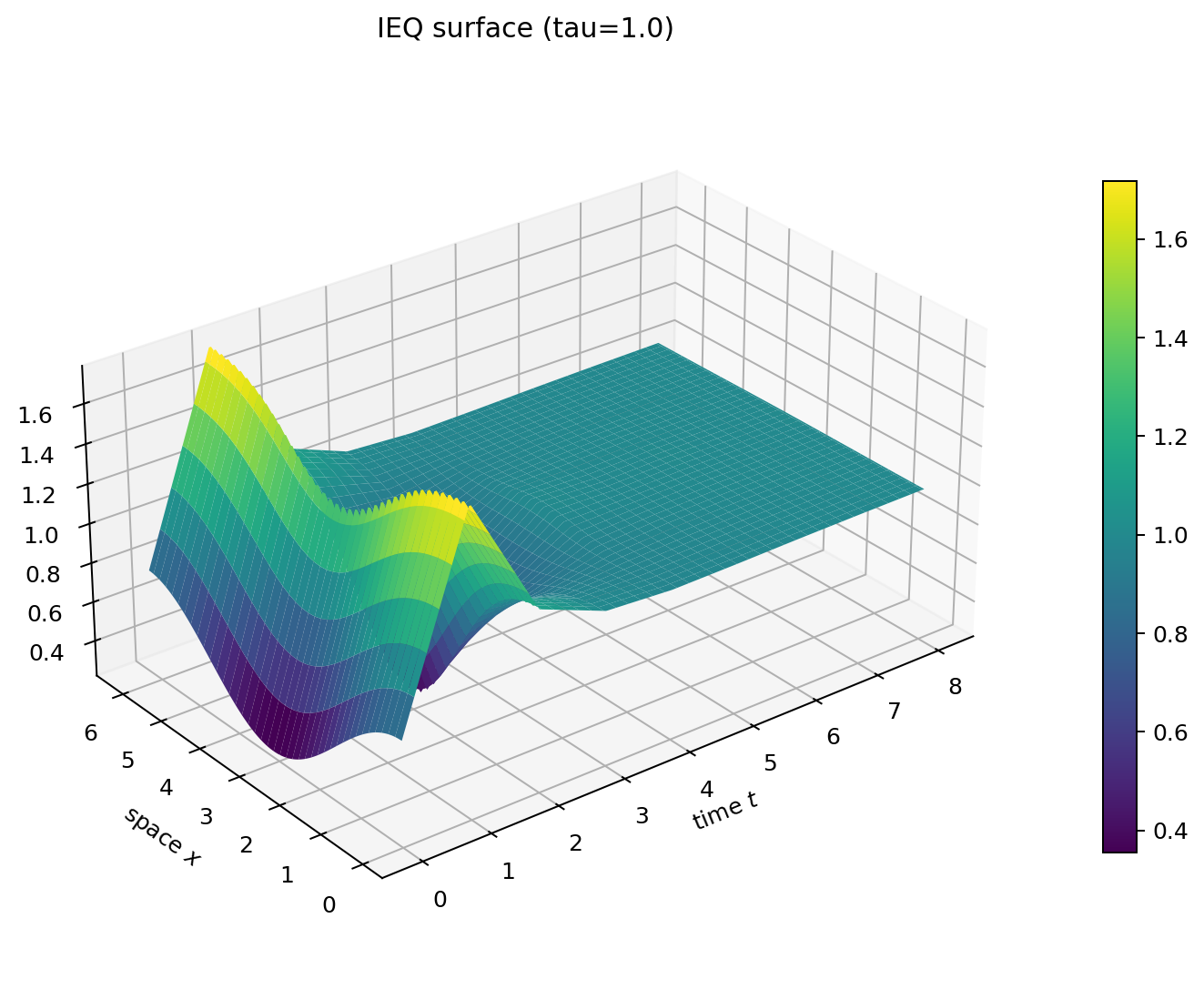}
    \end{minipage}
    \caption{IEQ comparison. Left: small-step run with \(\tau=0.1\) on \([0,2]\), for which the numerical solution remains monotone. Right: large-step run with \(\tau=1\) on \([0,8]\), for which pointwise monotonicity is lost.}
    \label{fig:ieq-step-compare-final}
\end{figure}

\paragraph{SAV}
For SAV we use
$C_0=1,
    \tau_{\rm s}=0.1,
    T_{\rm s}=2,$
and
$\tau_{\rm l}=1,
    T_{\rm l}=8 .$
The smaller-step run remains pointwise nondecreasing:
$m_{\Delta,{\rm s}}
    \approx 1.30\times10^{-11}>0 .$
For the larger step, a negative pointwise increment appears:
$m_{\Delta,{\rm l}}
    \approx -3.28\times10^{-1}<0 .$  The two corresponding surface plots in Figure~\ref{fig:sav-step-compare-final} provide the direct numerical evidence for this large-step loss of pointwise monotonicity.

\begin{figure}[!htbp]
    \centering
    \begin{minipage}[b]{0.45\textwidth}
        \centering
        \includegraphics[width=0.96\textwidth]{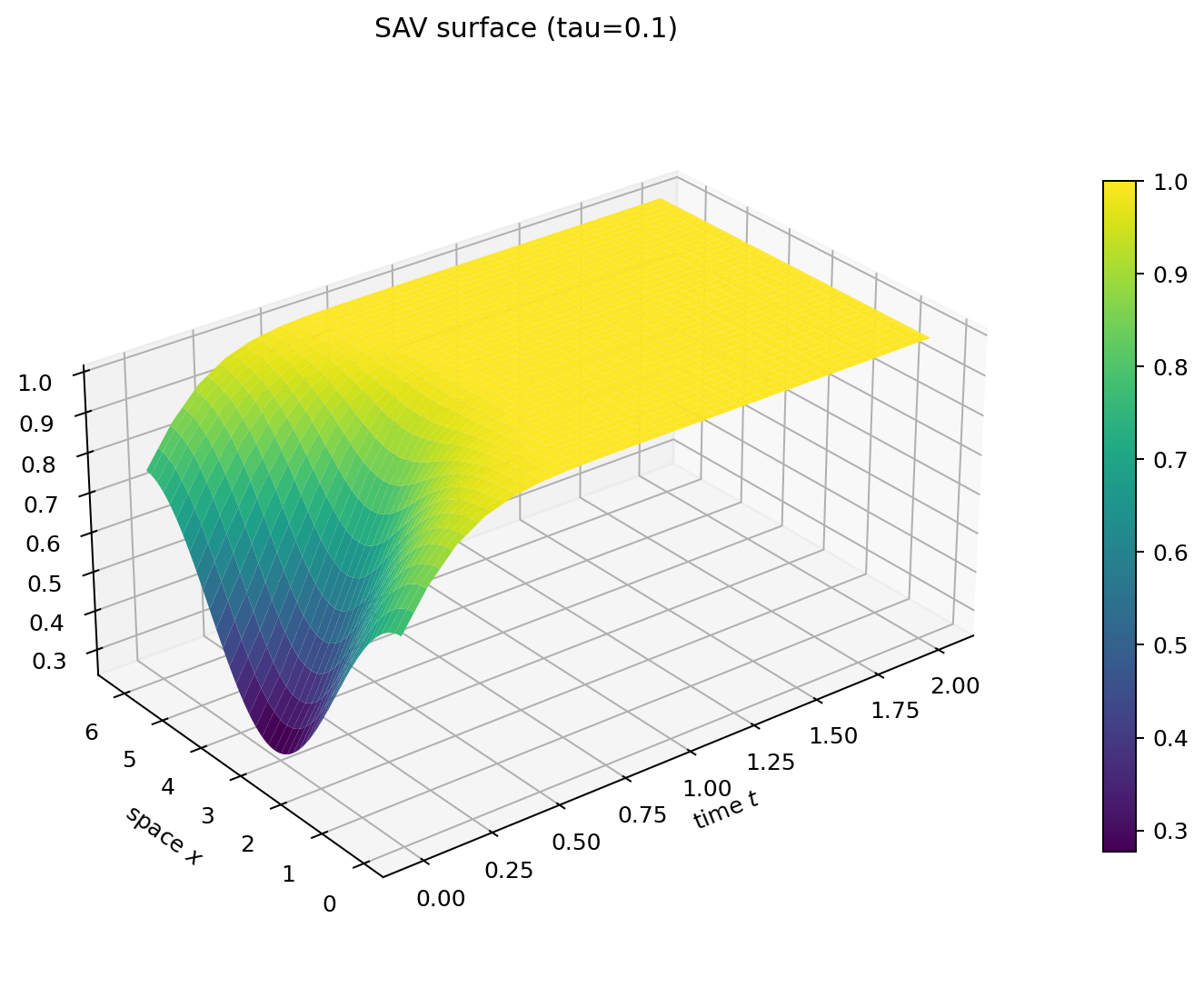}
    \end{minipage}\hfill
    \begin{minipage}[b]{0.45\textwidth}
        \centering
        \includegraphics[width=0.96\textwidth]{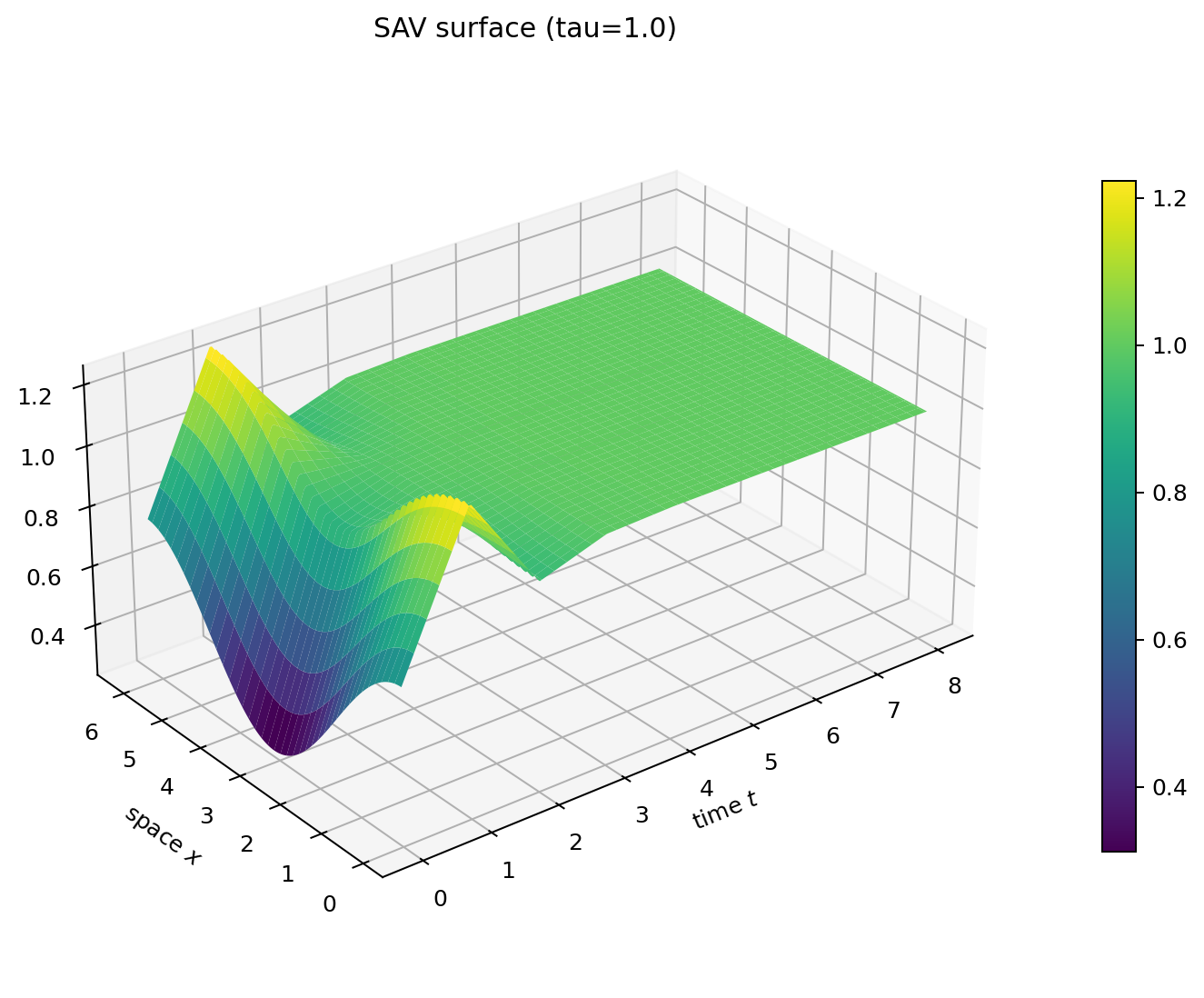}
    \end{minipage}
    \caption{SAV comparison. Left: small-step run with \(\tau=0.1\) on \([0,2]\), for which the numerical solution remains monotone. Right: large-step run with \(\tau=1\) on \([0,8]\), for which pointwise monotonicity is lost.}
    \label{fig:sav-step-compare-final}
\end{figure}

\subsection{Two-dimensional tests and grid refinement}
\label{subsec:two-dimensional-grid-refinement}

We finally examine whether the one-dimensional observations persist for a
genuinely two-dimensional nonhomogeneous solution and whether the signs of the
computed increments are insensitive to spatial grid refinement. Let
$\Omega_2=(0,2\pi)^2$
with periodic boundary conditions and choose the genuinely two-dimensional
initial datum $u_0(x,y)=\frac12+\frac18(\cos x+\cos y).$
Writing $q=\cos x+\cos y\in[-2,2]$, we have
$
    \frac14\le u_0(x,y)\le\frac34,
    \,
    \Delta u_0=-\frac18q.
$
Since $\eps=0.5$, direct expansion gives
\begin{align}
    \Reps(u_0)
    &=-\frac18q
      +4\left(\frac12+\frac18q-
      \left(\frac12+\frac18q\right)^3\right) \notag\\
    &=\frac32-\frac{3}{32}q^2-\frac{1}{128}q^3
      \ge \frac{17}{16}>0.
    \label{eq:two-dimensional-residual-positive}
\end{align}
Thus $u_0\in\Aeps$ and the exact two-dimensional Allen--Cahn flow is
pointwise nondecreasing. In particular, a negative discrete increment cannot
be attributed to a mixed-sign initial residual.

The spatial Laplacian is discretized by the second-order periodic
finite-difference operator on two uniform grids, $128^2$ and $256^2$. For
both meshes the discrete initial residual is strictly positive:
$
    \min R_h^0=1.062550\quad(N=128),
    \,
    \min R_h^0=1.062513\quad(N=256).
$
For every method we compute the spatial diagnostic
\begin{equation}
    D_h(x_i,y_j)
    :=\min_n\bigl(u_{i,j}^{n+1}-u_{i,j}^{n}\bigr).
    \label{eq:two-dimensional-increment-diagnostic}
\end{equation}
Each numerical scheme is displayed in a separate figure, and the two panels
in that figure correspond to the two spatial grids. This organization avoids
compressing all methods into one crowded multi-panel plot and makes the
mesh-to-mesh comparison direct. Constant-coefficient periodic elliptic
systems are solved by diagonalizing the discrete Laplacian with the
two-dimensional discrete Fourier transform. The nonlinear equations are
iterated to a tolerance of $2\times10^{-11}$, and the two-step schemes are
initialized by a monotone fully implicit Euler starter with temporal substep
at most $0.025$.

\paragraph{Fully implicit Euler}
We use $\tau=0.03$ and $T=0.6$. Figure~\ref{fig:2d-ie-grid-refinement}
shows $D_h>0$ on both meshes. The minimum increments are
$1.762821\times10^{-3}$ and $1.762877\times10^{-3}$, respectively.

\begin{figure}[!htbp]
    \centering
    \includegraphics[width=0.7\textwidth]{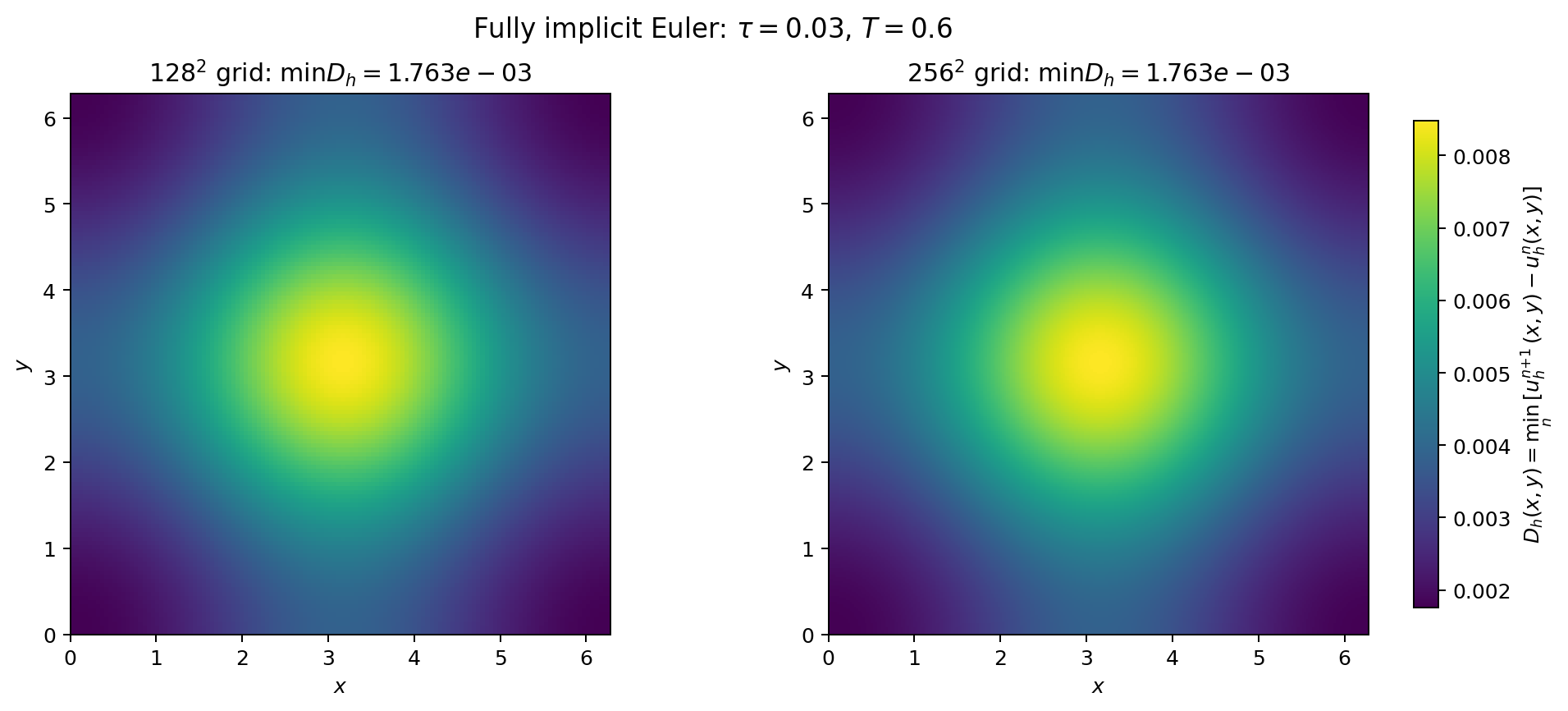}
    \caption{Two-dimensional grid-refinement test for fully implicit Euler.
    The left and right panels use $128^2$ and $256^2$ grids, respectively.
    In both cases $D_h(x,y)$ is strictly positive, confirming the expected
    pointwise monotone growth.}
    \label{fig:2d-ie-grid-refinement}
\end{figure}

\paragraph{First-order convex splitting}
For CSS1 we take $\tau=0.5$ and $T=3$. Figure~\ref{fig:2d-css1-grid-refinement}
shows minimum values of $D_h$ equal to $3.421144\times10^{-3}$ and
$3.421247\times10^{-3}$ on the two grids. Thus the large-step computation
remains pointwise increasing in two space dimensions, although the
one-dimensional tests show that the transient is delayed.

\begin{figure}[!htbp]
    \centering
    \includegraphics[width=0.7\textwidth]{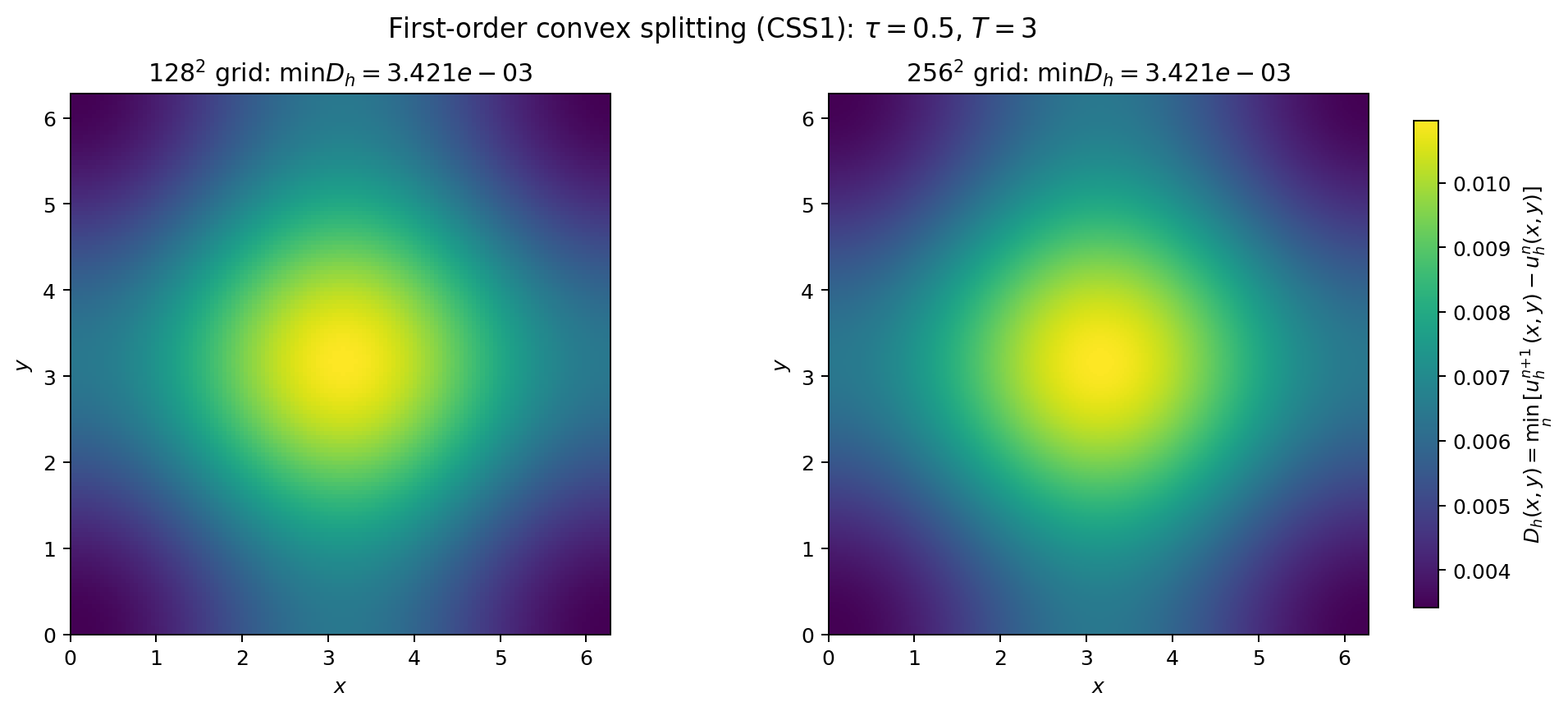}
    \caption{Two-dimensional grid-refinement test for CSS1 with $\tau=0.5$.
    The diagnostic remains positive on both meshes, and the spatial pattern
    is unchanged under refinement.}
    \label{fig:2d-css1-grid-refinement}
\end{figure}

\paragraph{First-order stabilized semi-implicit scheme}
With $S=32$, $\tau=0.5$, and $T=2$,
Figure~\ref{fig:2d-stabilized-first-grid-refinement} gives minimum increments
of $2.473010\times10^{-2}$ and $2.472999\times10^{-2}$ on the two grids.
Hence the strongly stabilized first-order method also preserves the sign of
the pointwise increment on both grids, while retaining the artificial damping
described above.

\begin{figure}[!htbp]
    \centering
    \includegraphics[width=0.7\textwidth]{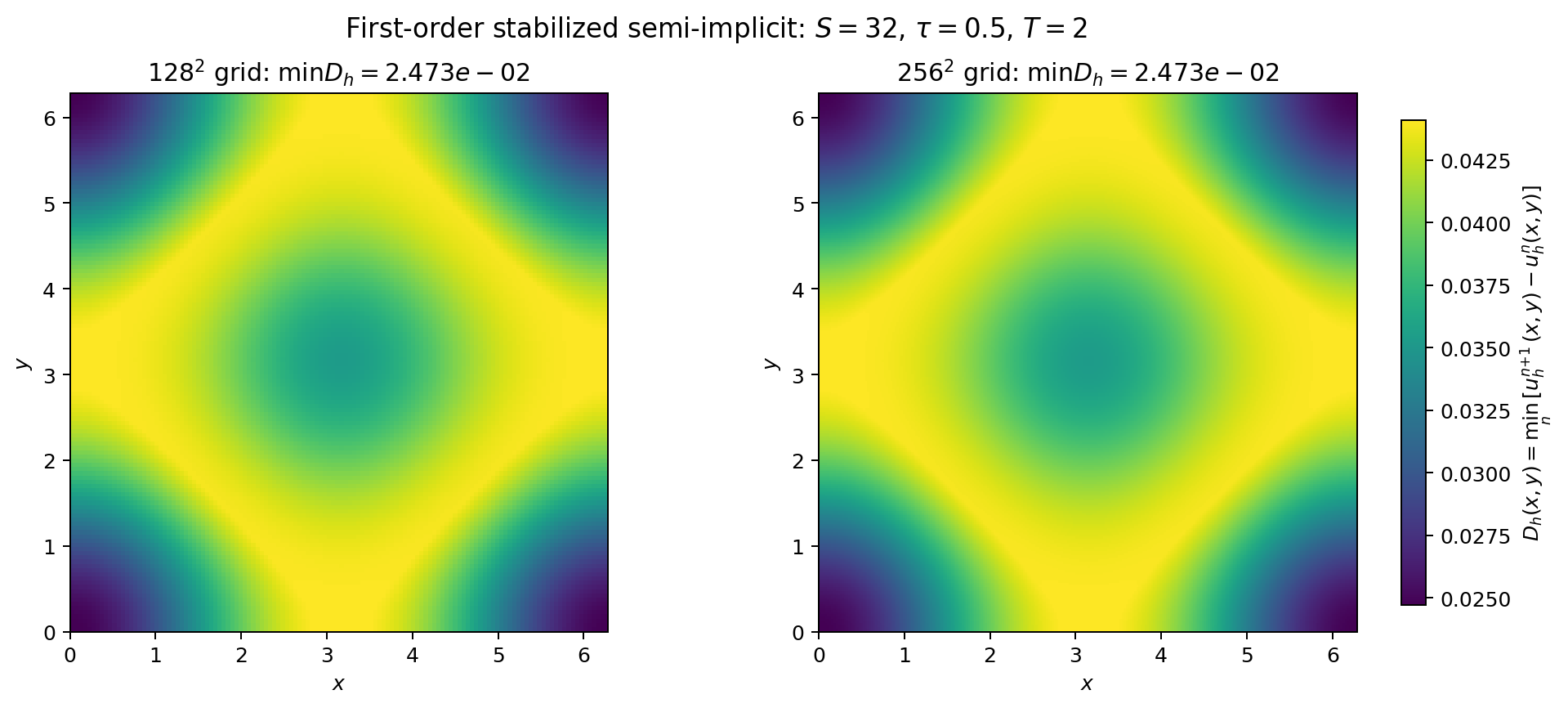}
    \caption{Two-dimensional grid-refinement test for the first-order
    stabilized semi-implicit method with $S=32$ and $\tau=0.5$. Both grids
    give a positive diagnostic with essentially identical magnitude and
    shape.}
    \label{fig:2d-stabilized-first-grid-refinement}
\end{figure}

\paragraph{Second-order convex splitting}
For CS-modCN we use $\tau=1$ and $T=3$.
Figure~\ref{fig:2d-csmodcn-grid-refinement} shows minimum increments of
$-1.866301\times10^{-1}$ on the $128^2$ grid and
$-1.866300\times10^{-1}$ on the $256^2$ grid. The negative increment is
therefore several orders of magnitude larger than the nonlinear tolerance
and remains unchanged under refinement.

\begin{figure}[!htbp]
    \centering
    \includegraphics[width=0.7\textwidth]{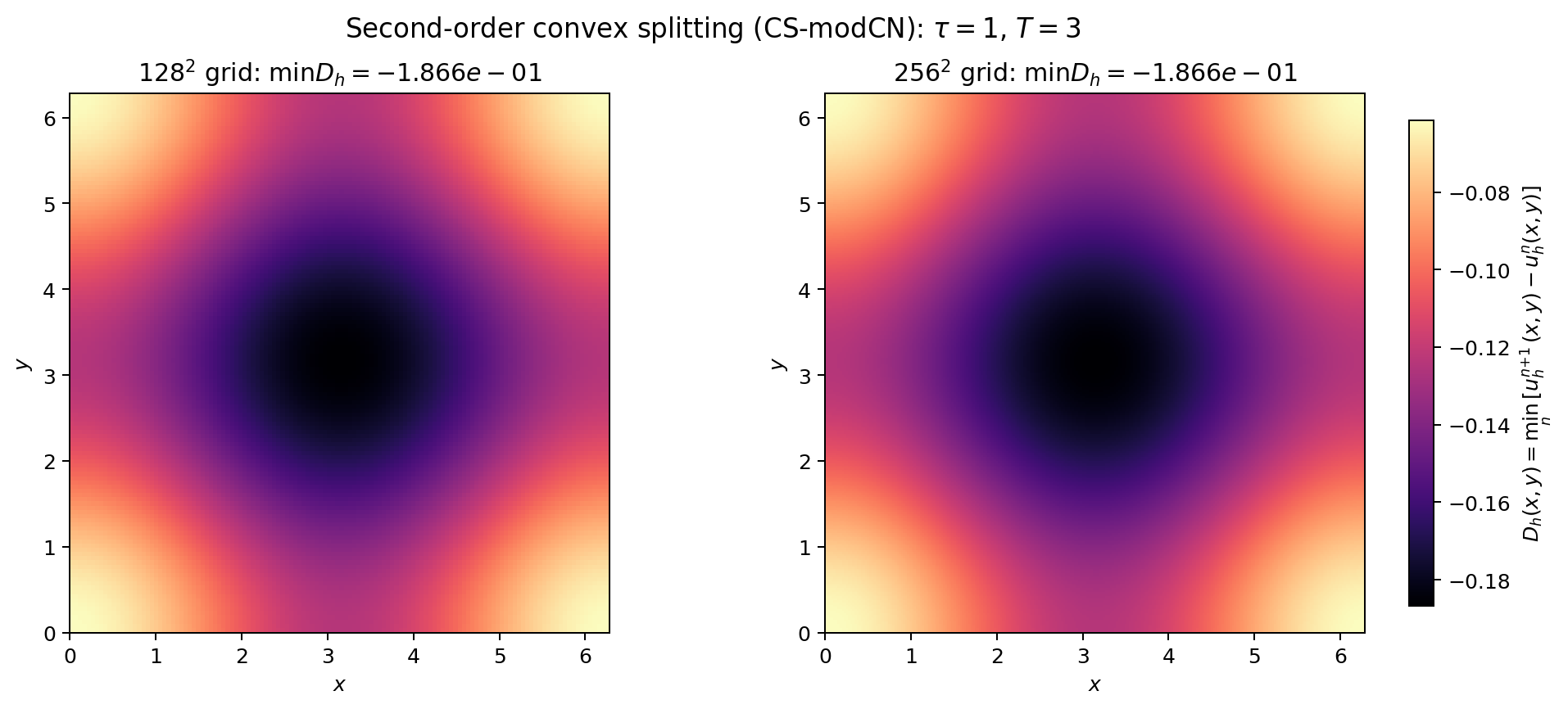}
    \caption{Two-dimensional grid-refinement test for CS-modCN with
    $\tau=1$. The diagnostic is negative on both grids, showing that the
    wrong pointwise direction is not a one-dimensional or coarse-grid
    artifact.}
    \label{fig:2d-csmodcn-grid-refinement}
\end{figure}

\paragraph{Second-order stabilized CN/AB}
For $S=5$, $\tau=0.5$, and $T=5$, the minimum increments are
$-4.283470\times10^{-2}$ and $-4.283526\times10^{-2}$. Moreover, the
negative region occupies approximately $87.4\%$ of the grid points for both
meshes. Figure~\ref{fig:2d-stab-cnab-grid-refinement} shows that the location
and geometry of this region are stable under refinement.

\begin{figure}[!htbp]
    \centering
    \includegraphics[width=0.7\textwidth]{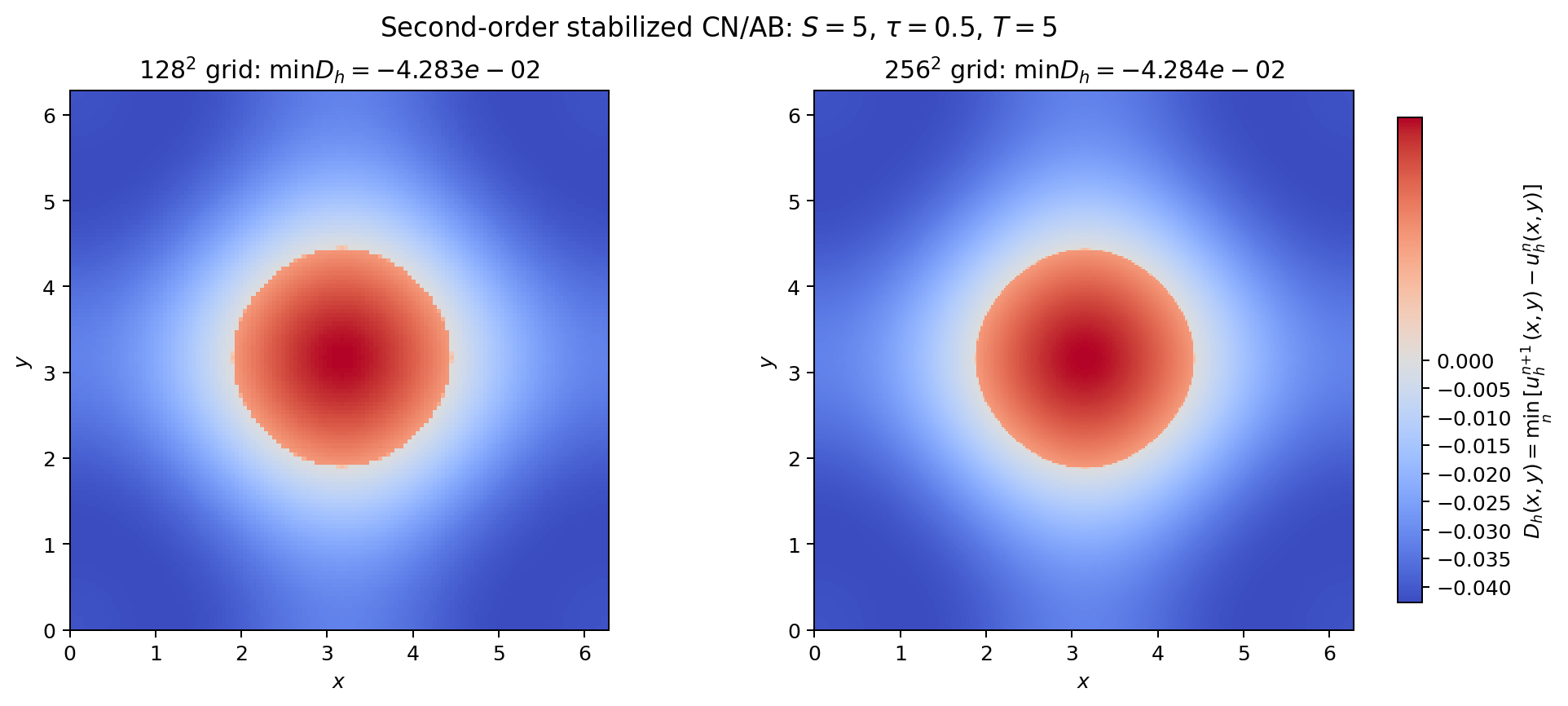}
    \caption{Two-dimensional grid-refinement test for stabilized CN/AB with
    $S=5$ and $\tau=0.5$. The negative region and its minimum value are
    essentially mesh independent.}
    \label{fig:2d-stab-cnab-grid-refinement}
\end{figure}

\paragraph{IEQ}
For $C_{\rm IEQ}=1$, $\tau=1$, and $T=8$,
Figure~\ref{fig:2d-ieq-grid-refinement} shows computed minima of $-1.031358$
and $-1.031322$ on the two grids. The wrong-signed increment is therefore
robust with respect to grid refinement and is much larger than either the
nonlinear solver tolerance or the difference between the two meshes.

\begin{figure}[!htbp]
    \centering
    \includegraphics[width=0.7\textwidth]{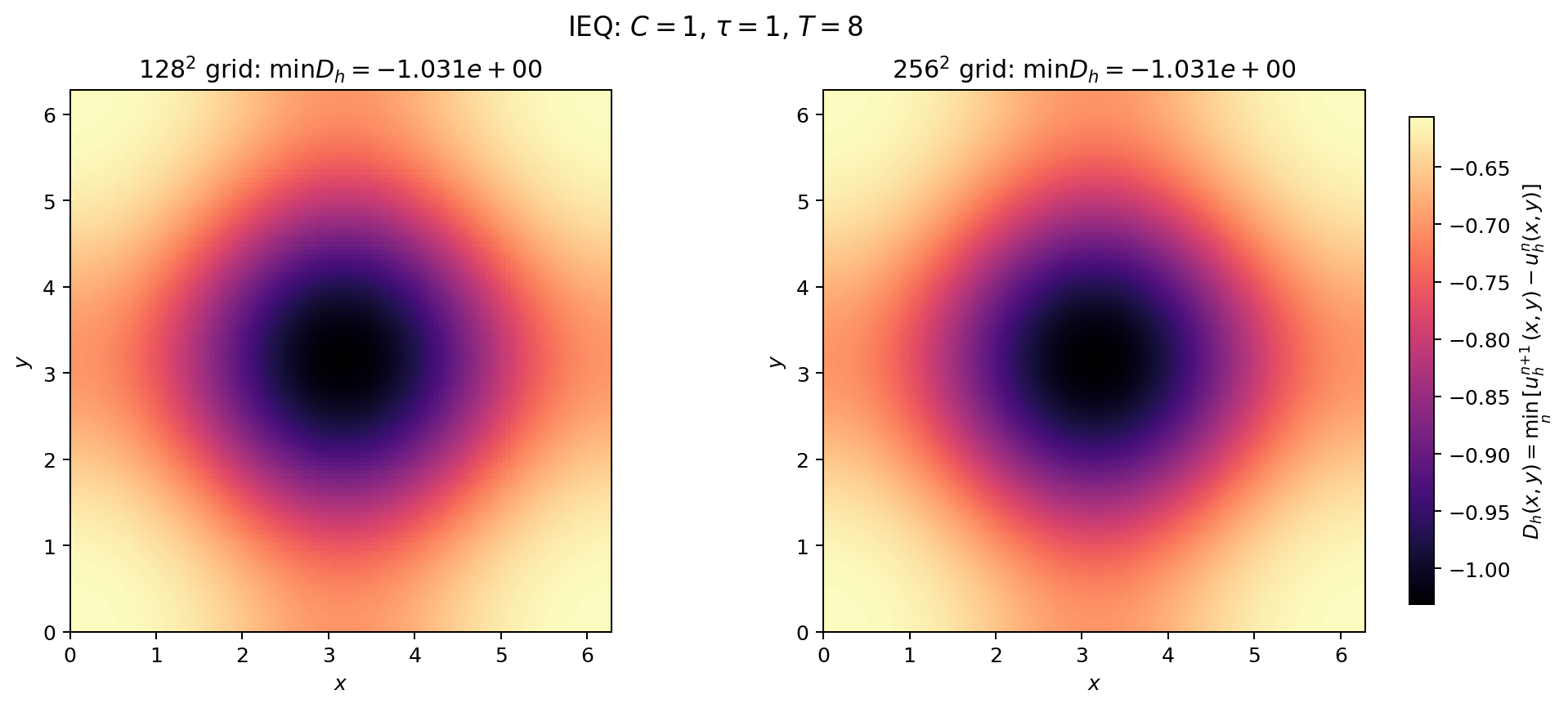}
    \caption{Two-dimensional grid-refinement test for IEQ with $\tau=1$.
    Both grids produce the same negative-increment pattern and nearly the same
    minimum value.}
    \label{fig:2d-ieq-grid-refinement}
\end{figure}

\paragraph{SAV}
Finally, for $C_0=1$, $\tau=1$, and $T=8$,
Figure~\ref{fig:2d-sav-grid-refinement} shows minimum increments of
$-2.962691\times10^{-1}$ and $-2.962770\times10^{-1}$ on the two grids.
The negative region covers approximately $98.2\%$ of the grid points on both
meshes, and its boundary is visually stable under refinement.

\begin{figure}[!htbp]
    \centering
    \includegraphics[width=0.7\textwidth]{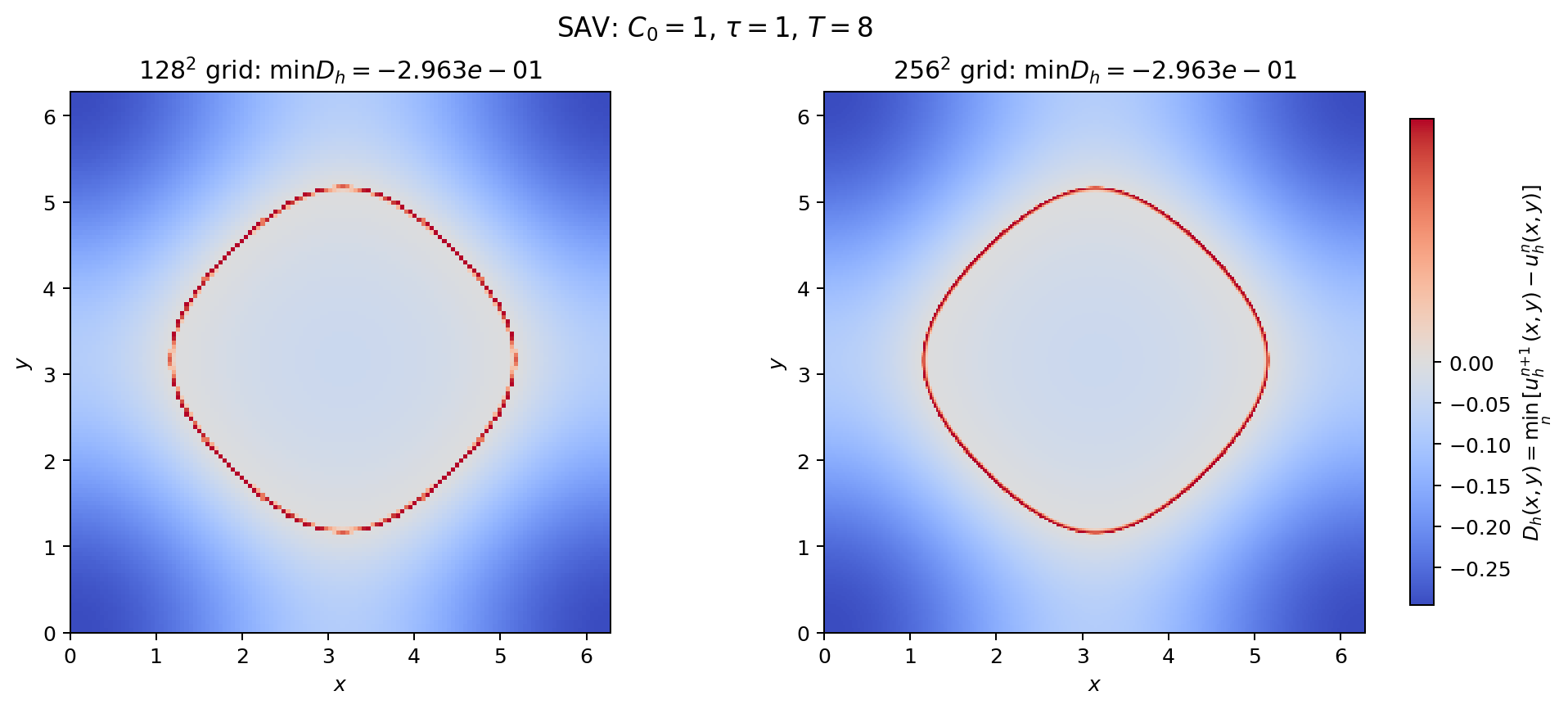}
    \caption{Two-dimensional grid-refinement test for SAV with $\tau=1$.
    The negative region and the magnitude of the minimum increment persist
    from the $128^2$ grid to the $256^2$ grid.}
    \label{fig:2d-sav-grid-refinement}
\end{figure}

The minimum-increment values are summarized in
Table~\ref{tab:two-dimensional-grid-refinement}. For every scheme, the
relative change in $\min D_h$ between the two grids is below
$3.5\times10^{-5}$. Consequently, the positive signs obtained for fully
implicit Euler, CSS1, and first-order stabilization, as well as the negative
signs obtained for CS-modCN, stabilized CN/AB, IEQ, and SAV, are insensitive
to this factor-two spatial refinement.

\begin{table}[!htbp]
\centering
\small
\caption{Two-dimensional minimum-increment diagnostic on two periodic grids.}
\label{tab:two-dimensional-grid-refinement}
\renewcommand{\arraystretch}{1.18}
\begin{tabular}{@{}p{0.38\textwidth}ccp{0.18\textwidth}@{}}
\hline
Scheme and representative parameters & $128^2$ & $256^2$ & Observed sign \\
\hline
Fully implicit Euler, $\tau=0.03$ & $1.762821\times10^{-3}$ & $1.762877\times10^{-3}$ & Positive \\
CSS1, $\tau=0.5$ & $3.421144\times10^{-3}$ & $3.421247\times10^{-3}$ & Positive \\
First-order stabilized, $S=32$, $\tau=0.5$ & $2.473010\times10^{-2}$ & $2.472999\times10^{-2}$ & Positive \\
CS-modCN, $\tau=1$ & $-1.866301\times10^{-1}$ & $-1.866300\times10^{-1}$ & Negative \\
Stabilized CN/AB, $S=5$, $\tau=0.5$ & $-4.283470\times10^{-2}$ & $-4.283526\times10^{-2}$ & Negative \\
IEQ, $\tau=1$ & $-1.031358$ & $-1.031322$ & Negative \\
SAV, $\tau=1$ & $-2.962691\times10^{-1}$ & $-2.962770\times10^{-1}$ & Negative \\
\hline
\end{tabular}
\renewcommand{\arraystretch}{1.0}
\end{table}

\FloatBarrier

\subsection{Synthesis of the numerical evidence}

The experiments support the theoretical classification for genuinely nonhomogeneous data. The one-dimensional fully implicit Euler reference in Figure~\ref{fig:ie-monotone-surface-final} remains pointwise monotone. The CSS1 and first-order stabilized comparisons in Figures~\ref{fig:css1-step-compare-final}, \ref{fig:css1-point-delay-final}, \ref{fig:stab-step-compare-final}, and~\ref{fig:stab-point-delay-final} show that large steps can preserve the pointwise direction while severely distorting the physical time scale. By contrast, the two-step and auxiliary-variable experiments in Figures~\ref{fig:csmodcn-step-compare-final}, \ref{fig:stab-cnab-step-compare-final}, \ref{fig:ieq-step-compare-final}, and~\ref{fig:sav-step-compare-final} reproduce the predicted emergence of negative pointwise increments for large steps. The two-dimensional tests in Figures~\ref{fig:2d-ie-grid-refinement}--\ref{fig:2d-sav-grid-refinement} and Table~\ref{tab:two-dimensional-grid-refinement} show that the same sign classification persists when both spatial directions are active and is stable from the $128^2$ grid to the $256^2$ grid. The stabilized CN/AB experiment remains especially representative: its numerical solution can stay inside the maximum-bound interval while losing pointwise monotonicity. Thus the combined one- and two-dimensional evidence confirms that unconditional energy stability, modified-energy stability, and maximum-bound preservation do not determine the local pointwise time direction.


\section{Concluding remarks}
\label{sec:con}

This paper has developed a pointwise monotonicity perspective for the Allen--Cahn equation and for several widely used time discretizations. At the continuous level, the sign of the Allen--Cahn residual identifies admissible classes of data for which the solution moves pointwise in the direction of a stable phase. Thus the exact flow carries a local-in-space time direction: data in \(\mathcal A_\varepsilon^+\) generate monotone growth in the direction of \(+1\), while data in \(\mathcal A_\varepsilon^-\) generate the symmetric monotone decay in the direction of \(-1\). This property is more informative than range control, because it records not only where the solution lies but also in which direction it evolves at each spatial point.

The fully implicit Euler method serves as the positive reference scheme. Under the usual unique-solvability restriction, and provided the discrete residual has the appropriate sign, the implicit resolvent preserves the pointwise monotone direction and propagates the residual sign. In this sense, the monotone-growth mechanism of the Allen--Cahn flow can be inherited by a genuinely implicit discretization under a computable time-step condition.

The behavior of unconditionally energy-stable and
modified-energy-stable schemes is more subtle. For convex splitting
methods, the first-order scheme preserves the monotone direction but
evolves with a reduced effective time step, whereas the second-order
scheme may produce wrong-signed pointwise increments for sufficiently
large time steps despite its modified-energy stability. Stabilized
schemes exhibit a similar distinction. In the strongly stabilized
regime, the first-order scheme can preserve the pointwise direction,
but at the cost of artificial damping and a distorted physical time
scale. By contrast, the second-order stabilized CN/AB scheme may lose
pointwise monotonicity for large time steps even in the
modified-energy-stable regime. Our numerical experiments further show
that its computed solution may remain within the maximum-bound interval
while the pointwise increment has already acquired the wrong sign,
illustrating that bound preservation and pointwise monotonicity are
different dynamical properties. The IEQ and SAV formulations likewise
admit admissible monotone data for which sufficiently large time steps
reverse the pointwise direction, although their corresponding modified
energies remain nonincreasing. For second-order convex splitting,
stabilized CN/AB, IEQ, and SAV, the strict homogeneous sign reversal is
stable under sufficiently small smooth nonhomogeneous admissible
perturbations. Hence these counterexamples persist when
\(\Delta u^0\not\equiv0\) and diffusion genuinely participates in the
PDE dynamics. Table~\ref{tab:concluding-classification}
summarizes the time-step regimes established in this paper and the
resulting pointwise monotonicity behavior. The positive statements apply
under the indicated residual and range hypotheses, whereas the negative
statements record explicit counterexamples and should not be read as failure
for every initial datum.

\begin{table}[!htbp]
\centering
\small
\caption{Time-step regimes proved in this paper and their implications for pointwise monotonicity.}
\label{tab:concluding-classification}
\renewcommand{\arraystretch}{1.25}
\begin{tabular}{@{}p{0.26\textwidth}p{0.40\textwidth}p{0.26\textwidth}@{}}
\hline
Numerical scheme & Time-step regime & Pointwise monotonicity status \\
\hline
Fully implicit Euler &
\(R^0\ge0, 0<\tau<\eps^2\) &
Preserved \\
First-order convex splitting &
\(R^0\ge0, \tau>0\) &
Preserved, with delay \\
First-order stabilized scheme &
\(u_0\in\Aeps, \tau>0, S\ge2/\eps^2\) &
Preserved, with damping \\
Second-order convex splitting &
\(\tau\) sufficiently large &
Not preserved \\
Second-order stabilized scheme &
\(\tau\) sufficiently large &
Not preserved \\
IEQ &
\(\tau\) sufficiently large &
Not preserved \\
SAV &
\(\tau\) sufficiently large &
Not preserved \\
\hline
\end{tabular}
\renewcommand{\arraystretch}{1.0}
\end{table}

These conclusions sharpen the distinction between three stability concepts. Energy stability is a global Lyapunov-type property; maximum-bound preservation is a range constraint; pointwise monotonicity is a local dynamical property. The first two principles are indispensable in phase-field computation, but neither determines the sign of the local time increment.

The message is therefore not that energy-stable schemes are unimportant. Rather, energy stability should be complemented by local dynamical criteria when transient fidelity is essential. For Allen--Cahn simulations, a reliable large-step discretization should be assessed not only by whether it dissipates an energy or preserves a maximum bound, but also by whether it follows the correct pointwise direction on the correct physical time scale.

\appendix
\renewcommand{\thetheorem}{\Alph{section}.\arabic{theorem}}
\refstepcounter{section}
\section*{\thesection.\quad Duhamel proof of the spatially homogeneous reduction}
\addcontentsline{toc}{section}{Appendix A.\enspace Duhamel proof of the spatially homogeneous reduction}
\label{app:homogeneous-reduction}

\begin{theorem}[Spatially homogeneous invariant class and exact ODE reduction]
\label{thm:homogeneous-reduction}
Let $u$ be the unique global mild solution of the Allen--Cahn problem
\eqref{eq:AC} under either periodic or homogeneous Neumann boundary
conditions. If the initial datum is spatially constant,
$u_0(x)\equiv u_0\in\mathbb R,$
then
$
    u(x,t)=v(t),
$
where $v$ is the unique solution of
\begin{equation}
    v'(t)+\frac1{\eps^2}f(v(t))=0,
    \qquad v(0)=u_0.
    \label{eq:appendix-homogeneous-ode}
\end{equation}
In particular, the spatially homogeneous states form an invariant class
of the Allen--Cahn flow.
\end{theorem}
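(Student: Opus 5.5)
The plan is to show that the spatially constant function $\tilde u(x,t):=v(t)$ is itself a mild solution of \eqref{eq:AC}, and then to invoke uniqueness of mild solutions to conclude $u=\tilde u$. The first step is to settle the ODE \eqref{eq:appendix-homogeneous-ode} globally. The right-hand side $-\eps^{-2}f(v)=\eps^{-2}(v-v^3)$ is locally Lipschitz, so Picard--Lindel\"of gives a unique local solution. Global existence follows from the a priori bound: $\frac{d}{dt}\tfrac12 v^2=\eps^{-2}(v^2-v^4)\le \eps^{-2}v^2$, so $|v(t)|\le |u_0|e^{t/\eps^2}$ on any finite interval. In fact $|v(t)|\le\max\{|u_0|,1\}$, since $\pm1$ and $0$ are equilibria. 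Hence $v\in C^1([0,\infty))$ and is bounded on bounded time intervals.

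The second step is to verify the Duhamel formula. Let $e^{t\Delta}$ denote the heat semigroup with periodic or homogeneous Neumann boundary conditions. The key observation is that constants are preserved by this semigroup: $e^{t\Delta}c=c$ for every constant $c$. In the periodic case this holds because the constant Fourier mode has eigenvalue $0$. In the Neumann case, the constant function solves the heat equation with $\partial_\nu c=0$, and uniqueness for the linear problem gives the claim (equivalently, constants lie in the kernel of the Neumann Laplacian). Consequently, for $\tilde u(t)=v(t)$,
\[
e^{t\Delta}u_0-\frac1{\eps^2}\int_0^t e^{(t-s)\Delta}f(\tilde u(s))\,ds
= u_0-\frac1{\eps^2}\int_0^t f(v(s))\,ds = v(t),
\]
where the last equality is the integrated form of \eqref{eq:appendix-homogeneous-ode}. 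Thus $\tilde u$ is a mild solution. It also lies in the solution class, for instance $C([0,T];L^\infty(\Omega))$ or $C([0,T];C(\overline\Omega))$, for every $T>0$.

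The third step is the uniqueness argument. Both $u$ and $\tilde u$ are bounded on $\Omega\times[0,T]$, say by $M$. Their difference satisfies
\[
\|u(t)-\tilde u(t)\|_{L^\infty}\le \frac{L_M}{\eps^2}\int_0^t\|u(s)-\tilde u(s)\|_{L^\infty}\,ds,
\]
where $L_M=\sup_{|s|\le M}|f'(s)|$ and we use that $e^{t\Delta}$ is an $L^\infty$-contraction. Gronwall's inequality then gives $u\equiv\tilde u$ on $[0,T]$, and since $T$ is arbitrary, on $[0,\infty)$. Invariance of the homogeneous class is immediate.

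The only delicate point is the uniqueness step, which requires a priori boundedness of the given mild solution $u$ in order to apply the local Lipschitz estimate. I would obtain this by working in the standard $L^\infty$ mild-solution framework, where global solutions are bounded on compact time intervals, or by simply taking this as part of the hypothesis that $u$ is the unique global mild solution. With that in hand, uniqueness directly identifies $u$ with $\tilde u$ and no further Gronwall argument is even needed.
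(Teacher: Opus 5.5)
Your proposal is correct and follows essentially the same route as the paper. Both arguments show that $\tilde u(x,t)=v(t)$ satisfies the Duhamel formula, because the periodic/Neumann heat semigroup fixes constants, and then identify $u=\tilde u$ by uniqueness of the mild solution. Your global ODE bound and the $L^\infty$-Gronwall uniqueness sketch are sound additions; the paper takes both as given through the hypothesis that $u$ is the unique global mild solution.
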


\begin{proof}
Let $S_B(t)=e^{t\Delta_B}$ denote the heat semigroup associated with the
chosen boundary condition $B$. For both periodic and homogeneous Neumann
boundary conditions, the constant function $\mathbf 1$ belongs to the
kernel of $\Delta_B$. Hence
\begin{equation}
    S_B(t)(c\mathbf 1)=c\mathbf 1,
    \qquad c\in\mathbb R,\quad t\ge0.
    \label{eq:appendix-semigroup-constants}
\end{equation}

A mild solution $w$ of \eqref{eq:AC} satisfies the Duhamel formula
\begin{equation}
    w(t)=S_B(t)w(0)
    -\frac1{\eps^2}\int_0^t S_B(t-s)f(w(s))\,\mathrm ds.
    \label{eq:appendix-duhamel-formula}
\end{equation}
Let $v$ solve \eqref{eq:appendix-homogeneous-ode} and define
$\widetilde u(x,t):=v(t).$
Integrating \eqref{eq:appendix-homogeneous-ode} gives
$ v(t)=u_0-\frac1{\eps^2}\int_0^t f(v(s))\,\mathrm ds.$
Since $\widetilde u(\cdot,s)=v(s)\mathbf 1$, relation
\eqref{eq:appendix-semigroup-constants} yields
\begin{align*}
&S_B(t)(u_0\mathbf 1)
-\frac1{\eps^2}\int_0^t
S_B(t-s)f(\widetilde u(s))\,\mathrm ds \\
&\qquad
=u_0\mathbf 1
-\frac1{\eps^2}\int_0^t f(v(s))\mathbf 1\,\mathrm ds
=v(t)\mathbf 1
=\widetilde u(t).
\end{align*}
Thus $\widetilde u$ satisfies \eqref{eq:appendix-duhamel-formula} with the
same initial datum as $u$. By uniqueness of the global mild solution,
$
    u(x,t)=\widetilde u(x,t)=v(t),
    \, x\in\Omega,\, t\ge0.
$
Therefore, the Allen--Cahn flow preserves spatially homogeneous states,
and its restriction to this invariant class is exactly
\eqref{eq:appendix-homogeneous-ode}.
\end{proof}

\begin{remark}[Why invariance still requires an argument]
Under periodic or homogeneous Neumann boundary conditions on a connected
domain, a compatible harmonic initial datum is necessarily constant. However,
checking only that the diffusion term vanishes initially is not, by itself, an
invariance proof. The argument above constructs the spatially constant
candidate $\widetilde u(x,t)=v(t)$, verifies that it satisfies the same Duhamel
formula as the PDE solution, and then uses uniqueness.
\end{remark}

\begin{remark}[Role of the boundary condition]
The reduction relies on the invariance of constants under the heat
semigroup. This property holds for periodic and homogeneous Neumann
boundary conditions. It fails for the homogeneous Dirichlet semigroup:
a nonzero constant is incompatible with $u|_{\partial\Omega}=0$ and is not
in the kernel of the Dirichlet Laplacian. In particular,
$e^{t\Delta_D}\mathbf 1\ne\mathbf 1$ for $t>0$. If a nonzero constant is
nevertheless admitted as an $L^2$ initial datum, the Dirichlet semigroup
immediately produces a spatially nonconstant state. Thus the PDE does not
reduce to \eqref{eq:appendix-homogeneous-ode}; the zero state is the only
constant exception.
\end{remark}

\section*{Acknowledgements}
During the preparation of this work, we used ChatGPT (OpenAI) to assist with language polishing. The authors reviewed all suggested edits and take full responsibility for the published content.

\bibliographystyle{alpha}
\bibliography{sta}

\end{document}